\documentclass[a4paper,11pt]{article}
\usepackage[latin1]{inputenc}
\usepackage[english]{babel}
\usepackage{amsmath}
\usepackage{amsfonts}
\usepackage{amssymb}
\usepackage{epsfig}
\usepackage{amsopn}
\usepackage{amsthm}
\usepackage{color}
\usepackage{graphicx}
\usepackage{enumerate}
%\usepackage{showkeys}
%\setlength{\oddsidemargin}{0.25in} \addtolength{\hoffset}{-1cm}
%\addtolength{\textwidth}{4.5cm} \addtolength{\voffset}{-1cm}
%\addtolength{\textheight}{3cm}
\setlength{\oddsidemargin}{0.25in} \addtolength{\hoffset}{0cm}
\addtolength{\textwidth}{2.5cm} \addtolength{\voffset}{-1cm}
\addtolength{\textheight}{1cm}
% Several other commands that are useful for my purposes
\newtheorem{theorem}{Theorem}[section]

\newtheorem{lemma}[theorem]{Lemma}
\newtheorem{proposition}[theorem]{Proposition}
\newtheorem{definition}[theorem]{Definition}
\newtheorem{remark}[theorem]{Remark}
\newtheorem*{theorem*}{Theorem}
\newtheorem*{lemma*}{Lemma}
\newtheorem*{remark*}{Remark}
\newtheorem*{definition*}{Definition}
\newtheorem*{proposition*}{Proposition}
\newtheorem*{corollary*}{Corollary}
\numberwithin{equation}{section}
%\numberwithin{theorem}{section}
%\numberwithin{proposition}{section} \numberwithin{lemma}{section}
%\numberwithin{definition}{section} \numberwithin{corollary}{section}
%\numberwithin{remark}{section}
%

\newcommand{\real}{\mathbb{R}}

 %fn space R^Om*Om
 %symmetric reps
  %alternating reps
 %direct sum
 %composition sign

  %complex numbers
  %a vector space or
  %function space

  %measure category
         % bar-under (but see \un below)
\let\ced=\c         % cedilla
          % dot-under
          % Hungarian umlaut
         % slashed (suppressed) l (Polish)
         % " L
         % slashed o (Scandinavian)
         % " O
         % tie-after (semicircle connecting two letters)
          % breve
% Abbreviation for the Scalar Product

% Abbreviations for Greek letters

%\def\c{\chi}

\def\e{\varepsilon}        % Also, \varepsilon
            %   \varphi

         % Also, \varpi
          %   \vartheta
            %   \varrho
          %   \varsigma

% Calligraphic letters

 %per gli spazi L^p
 %per il supporto

 %per l'esponenziale
 %per la distanza
\def\qed{\,\unskip\kern 6pt \penalty 500
\raise -2pt\hbox{\vrule \vbox to8pt{\hrule width 6pt
\vfill\hrule}\vrule}\par}
\definecolor{darkblue}{rgb}{0.05, .05, .65}
\definecolor{darkgreen}{rgb}{0.1, .65, .1}
\definecolor{darkred}{rgb}{0.8,0,0}
\newcommand{\beqn}{\begin{equation}}
\newcommand{\eeqn}{\end{equation}}
\newcommand{\bear}{\begin{eqnarray}}
\newcommand{\eear}{\end{eqnarray}}
\newcommand{\bean}{\begin{eqnarray*}}
\newcommand{\eean}{\end{eqnarray*}}
%

%%%%%%%%%%%%%%%%%%%%%%%%%%%%%%%%%%%%%%%%%%%%%%%%%
%%%%%%%%%%%%%%%%%%%%%%%%%%%%%%%%%%%%%%%%%%%%%%%%%

%%%%%%%%%%%%%%%%%%%%%%%%%%%%%%%%%%%%%%%%%%%%%%%%%
\begin{document}
%%%%%%%%%%%%%%%%%%%%%%%%%%%%%%%%%%%%%%%%%%%%%%%%%

%%%%%%%%%%%%%%%%%%%%%%%%%%%%%%%%%%%%%%%%%%%%%%%%%
\title{\huge \bf Blow up profiles for a quasilinear reaction-diffusion equation with weighted reaction with linear growth}

\author{
\Large Razvan Gabriel Iagar\,\footnote{Instituto de Ciencias
Matem\'aticas (ICMAT), Nicol\'as Cabrera 13-15, Campus de
Cantoblanco, 28049, Madrid, Spain, \textit{e-mail:}
razvan.iagar@icmat.es}, \footnote{Institute of Mathematics of the
Romanian Academy, P.O. Box 1-764, RO-014700, Bucharest, Romania.}
\\[4pt] \Large Ariel S\'{a}nchez,\footnote{Departamento de Matem\'{a}tica
Aplicada, Ciencia e Ingenieria de los Materiales y Tecnologia
Electr\'onica, Universidad Rey Juan Carlos, M\'{o}stoles,
28933, Madrid, Spain, \textit{e-mail:} ariel.sanchez@urjc.es}\\
[4pt] }
\date{}
\maketitle

\begin{abstract}
We study the blow up profiles associated to the following second order reaction-diffusion equation with non-homogeneous reaction:
$$
\partial_tu=\partial_{xx}(u^m) + |x|^{\sigma}u,
$$
with $\sigma>0$. Through this study, we show that the non-homogeneous coefficient $|x|^{\sigma}$ has a strong influence on the blow up behavior of the solutions. First of all, it follows that finite time blow up occurs for self-similar solutions $u$, a feature that does not appear in the well known autonomous case $\sigma=0$. Moreover, we show that there are three different types of blow up self-similar profiles, depending on whether the exponent $\sigma$ is closer to zero or not. We also find an explicit blow up profile. The results show in particular that \emph{global blow up} occurs when $\sigma>0$ is sufficiently small, while for $\sigma>0$ sufficiently large blow up \emph{occurs only at infinity}, and we give prototypes of these phenomena in form of self-similar solutions with precise behavior. This work is a part of a larger program of understanding the influence of non-homogeneous weights on the blow up sets and rates.
\end{abstract}

\

\noindent {\bf AMS Subject Classification 2010:} 35B33, 35B40,
35K10, 35K67, 35Q79.

\smallskip

\noindent {\bf Keywords and phrases:} reaction-diffusion equations,
non-homogeneous reaction, blow up, critical case, self-similar solutions, phase
space analysis

\section{Introduction}

In this paper, we deal with the phenomenon of blow up in finite time for the following reaction-diffusion equation with a non-homogeneous reaction term:
\begin{equation}\label{eq1}
u_t=(u^m)_{xx}+|x|^{\sigma}u, \qquad u=u(x,t), \qquad (x,t)\in\real\times(0,T),
\end{equation}
for $m>1$ and any exponents $\sigma>0$, where the subscript notation in \eqref{eq1} indicates partial derivative with respect to the time or space variable. As usual, we say that a solution $u$ to Eq. \eqref{eq1} blows up in a finite time $T\in(0,\infty)$ if $u(t)\in L^{\infty}(\real)$ for any $t\in[0,T)$, but $u(T)\not\in L^{\infty}(\real)$. The time $T\in(0,\infty)$ as above is called the blow up time of the solution $u$. As a notation, here and throughout the paper by $u(t)$ we will understand the mapping $\real\ni x\mapsto u(x,t)\in\real$, for a given time $t>0$.

A first very interesting aspect of this problem is that finite time blow up in Eq. \eqref{eq1} is completely driven by the weight $|x|^{\sigma}$. Indeed, when the weight does not exist (that is $\sigma=0$), we remain with the following homogeneous equation
\begin{equation}\label{eq1.hom}
u_t=(u^m)_{xx}+u,
\end{equation}
for which it is well-known that typical solutions do not blow up in finite time. More precisely, solution to Eq. \eqref{eq1.hom} with bounded initial condition $u_0$ present exponential growth as $t\to\infty$, but they remain bounded for any time $t>0$, as it can be easily seen by making the change of variable
$$
u(x,t)=e^{t}v(x,t), \quad t=\frac{\log(m-1)\tau}{m-1},
$$
and noticing that the equation satisfied by the new function $v(x,\tau)$ is of porous medium type. It thus follows that blow up in finite time for solutions to Eq. \eqref{eq1} produces only due to the reinforced reaction term with an unbounded weight. Moreover, as it will be seen at the level of self-similar solutions, the condition $m>1$ is also essential for blow up in finite time. If we consider the reaction-diffusion with weighted reaction in its most general form
\begin{equation}\label{eq1.gen}
u_t=(u^m)_{xx}+|x|^{\sigma}u^p,
\end{equation}
with generic exponents $m>0$, $\sigma>0$ and $p>0$, the condition for finite time blow up to occur (at least at the level of self-similar solutions) is
\begin{equation}\label{cond.blow}
K:=\sigma(m-1)+2(p-1)>0,
\end{equation}
see our companion paper \cite{IS2}. We thus devote this work to the study of the very interesting limit case $p=1$ with $m>1$ and $\sigma>0$, while the more general case where $1<p<m$ in \eqref{eq1.gen} is investigated in the above mentioned companion paper \cite{IS2}. This splitting is justified by the fact that the proofs of the results in the case $p=1$ differ from the ones used for $p>1$; in particular, the proofs in the present work have a more geometrical basis.

\medskip

\noindent \textbf{Precedents.} Equations of the form
\begin{equation}\label{eq1.gen.gen}
u_t=\Delta u^m + a(x)u^p,
\end{equation}
with different cases of weight function $a(x)$, appeared in recent literature, in particular for the case when $a(x)$ is a compactly supported function (for example, a characteristic function of a bounded set). The work by Ferreira, de Pablo and V\'azquez \cite{FdPV06} deals with the one-dimensional case. Their results were then generalized to the $N$-dimensional case in \cite{KWZ11, Liang12} and also to the fast diffusion case $m<1$ \cite{BZZ11}. In all these relatively recent papers, interesting properties related to the Fujita-type exponent are established. One surprising fact is that there are important differences concerning the value of the Fujita-type critical exponent, which depends strongly on the space dimension $N=1$, $N=2$ and $N\geq3$ and also in all these cases it is different from the standard exponent of the homogeneous case (with $a(x)\equiv1$). Moreover, in \cite{FdPV06}, blow up rates, sets and profiles are also established for the one-dimensional case. All these works rely deeply on the fact that the non-homogeneous reaction is compactly supported which in fact means that there is no reaction at all close to the space infinity.

Much less is known for equations of the type \eqref{eq1.gen.gen} when $a(x)$ is not compactly supported and, more important, an unbounded function (typically a positive power $a(x)=|x|^{\sigma}$). There are available a number of works establishing the Fujita-type exponent $p_*=m+(\sigma+2)/N$ which satisfies the property that, if when $1<p<p_*$, all the solutions starting from a bounded initial condition $u_0\in L^{\infty}$ blow up in finite time, mostly for the semilinear case $m=1$ \cite{BK87, BL89, Pi97, Pi98, Su02}. In the latter of these \cite{Su02}, the case $m>1$ is considered too, but with the restriction $p>m$, and some conditions on the form and space tail as $|x|\to\infty$ of the initial condition $u_0$ are given in order to insure finite time blow up even for $p>p_*$. More recently, there are also works dealing with finer properties concerning finite time blow up for solutions to such equations, such as blow up rates or sets (see for example the series of papers \cite{GLS, GS11, GLS13}). An interesting problem derived from the presence of a power weight $a(x)=|x|^{\sigma}$ is whether $x=0$ can be a blow up point or not (as pointwisely there no reaction occurs at the origin). In some partial ranges for $p$ and $\sigma$ in \eqref{eq1.gen}, blow up rates were established by Andreucci and Tedeev \cite{AT05}, who also consider different, more complicated nonlinearities, and some extensions to coupled systems of reaction-diffusion equations with weighted reaction were established in \cite{IU08} but only for semilinear equations (that is, when $m=1$).

The goal of the ongoing project to which the present work and the companion work \cite{IS2} belong to, is to understand better how finite time blow up occurs when an unbounded weight appears in the reaction, and in particular establish blow up sets, rates and profiles, which give a rather accurate description of the shape of the solutions when they approach their blow up time. We are now ready to state the main results of this paper.

\bigskip

\noindent \textbf{Main results}. Along this paper, our aim is to classify self-similar solutions to \eqref{eq1}, which are, as explained above, prototypes for the general qualitative properties of solutions; in particular, these special solutions are usually the \emph{blow-up profiles} (that is, patterns to which a solution approached close to its blow-up time) for general solutions, a fact that will be rigorously shown in forthcoming papers. More precisely, let us set
\begin{equation}\label{SSB}
u(x,t)=(T-t)^{-\alpha}f(\xi), \quad \xi=|x|(T-t)^{\beta},
\end{equation}
for some positive exponents $\alpha$, $\beta$ and a finite blow up time $T\in(0,\infty)$. Replacing this form into \eqref{eq1}, we obtain that the self-similar profiles $f(\xi)$ solve the following differential equation
\begin{equation}\label{SSODE}
(f^m)''(\xi)-\alpha f(\xi)+\beta\xi f'(\xi)+\xi^{\sigma}f(\xi)=0, \quad \xi\in[0,\infty),
\end{equation}
with exponents
\begin{equation}\label{SSexp}
\alpha=\frac{\sigma+2}{\sigma(m-1)}, \quad \beta=\frac{1}{\sigma}.
\end{equation}
Similarly as in our companion paper \cite{IS2}, we define below what kind of profiles $f$ we look for.
\begin{definition}\label{def1}
We say that a solution $f$ to the ordinary differential equation \eqref{SSODE} is a \textbf{good profile} is it fulfills one of the following alternatives with respect to its initial behavior:

\indent (P1) $f(0)=a>0$, $f'(0)=0$.

\indent (P2) $f(0)=0$, $(f^m)'(0)=0$.

A good profile $f$ is called a \textbf{good profile with interface} at some point $\xi_1\in(0,\infty)$ if
$$
f(\xi_1)=0, \qquad (f^m)'(\xi_1)=0, \qquad f>0 \ {\rm on} \
(\xi_1-\delta,\xi_1), \ {\rm for \ some \ } \delta>0.
$$
\end{definition}
With this definition, we will first prove
\begin{theorem}[Existence of good profiles with interface]\label{th.exist}
For any $\sigma>0$, there exists at least one good profile with interface $f$ to \eqref{SSODE}. In particular, for $\sigma=\sigma_*:=\sqrt{2(m+1)}$, there exists an explicit good profile with interface satisfying property (P2) in Definition \ref{def1}, namely
\begin{equation}\label{expl.prof}
f_*(\xi):=\xi^{2/(m-1)}\left(\frac{m-1}{2m(m+1)}-B\xi^{\sigma_*}\right)_+^{1/(m-1)}, \quad B=\frac{(m-1)^2}{m(\sigma_*+2)(m\sigma_*+m+1)}.
\end{equation}
\end{theorem}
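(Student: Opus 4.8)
The plan for the explicit profile is direct verification, exploiting the algebraically convenient fact that the candidate \eqref{expl.prof} has $f_*^{m-1}$ equal to the \emph{binomial} $P(\xi):=\frac{m-1}{2m(m+1)}\xi^2-B\xi^{\sigma_*+2}$ on its support. First I would recast \eqref{SSODE} in terms of $P=f^{m-1}$: writing $f=P^{1/(m-1)}$ and multiplying the equation by $(m-1)P^{(m-2)/(m-1)}$ turns it into the polynomial-friendly form
\[
mPP''+\frac{m}{m-1}(P')^2-\alpha(m-1)P+\beta\xi P'+(m-1)\xi^{\sigma}P=0 .
\]
Substituting the binomial $P$ produces only the three monomials $\xi^2$, $\xi^{\sigma+2}$ and $\xi^{2\sigma+2}$, so the identity reduces to three coefficient equations. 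The $\xi^2$-coefficient uses only $\alpha(m-1)-2\beta=1$ (valid for every $\sigma$ by \eqref{SSexp}) and fixes the leading constant $\frac{m-1}{2m(m+1)}$; the top $\xi^{2\sigma+2}$-coefficient fixes $B$ exactly as in \eqref{expl.prof}; and after the two first-order terms cancel (since $\alpha(m-1)-\beta(\sigma+2)=0$), the middle $\xi^{\sigma+2}$-coefficient collapses to the single relation $(\sigma+2)^2-4(\sigma+2)-2(m-1)=0$, i.e. $\sigma^2=2(m+1)$, which is precisely $\sigma=\sigma_*$. This confirms that $f_*$ solves \eqref{SSODE}.

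It then remains to check that $f_*$ is a good profile with interface in the sense of Definition \ref{def1}. Near $\xi=0$ one has $P\sim\frac{m-1}{2m(m+1)}\xi^2$, so $f_*(0)=0$ and $(f_*^m)'=\frac{m}{m-1}P^{1/(m-1)}P'\sim c\,\xi^{(m+1)/(m-1)}\to0$, giving property (P2). At the point $\xi_1$ where $P(\xi_1)=0$, both $f_*(\xi_1)=0$ and $(f_*^m)'(\xi_1)=\frac{m}{m-1}P(\xi_1)^{1/(m-1)}P'(\xi_1)=0$ hold, while $P>0$ (hence $f_*>0$) on a left neighbourhood of $\xi_1$; this is exactly the interface condition.

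For the existence statement for arbitrary $\sigma>0$ I would pass to an autonomous dynamical system and argue geometrically. Using $t=\ln\xi$ and, for instance, the variables $X=\xi P'/P$, $Y=\xi^2/P$, $W=\xi^{\sigma+2}/P$, the transformed equation becomes a polynomial three-dimensional flow with $\dot Y=Y(2-X)$ and $\dot W=W(\sigma+2-X)$. Profiles of type (P1)/(P2) correspond to a distinguished critical point from which a one-parameter family of admissible trajectories emanates along its unstable manifold, while the interface $f(\xi_1)=0$ with vanishing flux corresponds to the boundary regime $Y,W\to\infty$, approached through the local behaviour $f\sim A(\xi_1-\xi)_+^{1/(m-1)}$ with $A^{m-1}=\frac{(m-1)\xi_1}{m\sigma}$. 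The aim is to produce a connecting orbit between the admissible origin-point and this interface region. Concretely I would run a shooting argument: parametrise the local interface solutions by $\xi_1\in(0,\infty)$, continue each backward in $\xi$, and track a shooting functional recording the data at $\xi=0$ (say the sign of $f'(0)$, or whether an earlier zero of $f$ is met). Establishing opposite behaviours in the limits $\xi_1\to0^+$ and $\xi_1\to\infty$ and invoking continuity yields, by an intermediate-value argument, a value of $\xi_1$ for which the backward trajectory reaches the origin with good-profile data; the explicit $f_*$ is a reassuring instance of such a connection at $\sigma=\sigma_*$.

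The hard part will be the global control of the backward trajectory: one must rule out that it escapes to a "bad" regime — blow-up of $X$ at a finite $\xi>0$, or convergence to a non-admissible node — before reaching $\xi=0$, and must show that the two limiting values of the shooting parameter genuinely straddle the target boundary data. I expect this to be handled by an invariant-region analysis of the $(X,Y,W)$ flow, using the monotonicity of $Y$ and $W$ dictated by the signs of $2-X$ and $\sigma+2-X$ to trap the orbit, which is where the geometric content of the argument is concentrated.
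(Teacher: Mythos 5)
Your treatment of the explicit profile is complete and correct, and it is essentially the paper's own route (the paper states that $f_*$ "can be checked by direct calculation" and leaves it to the reader). Your computation in the variable $P=f^{m-1}$ checks out: the transformed equation $mPP''+\frac{m}{m-1}(P')^2-\alpha(m-1)P+\beta\xi P'+(m-1)\xi^{\sigma}P=0$ is right, the $\xi^2$-coefficient uses $\alpha(m-1)-2\beta=1$ and gives the constant $\frac{m-1}{2m(m+1)}$, the $\xi^{2\sigma+2}$-coefficient gives $B$ as in \eqref{expl.prof}, and the middle coefficient indeed collapses (after the cancellation $\alpha(m-1)=\beta(\sigma+2)$) to $(\sigma+2)(\sigma-2)=2(m-1)$, i.e.\ $\sigma^2=2(m+1)$; your argument even explains \emph{why} $\sigma_*$ is forced, which the paper does not spell out. (For comparison, the paper also records that $f_*$ corresponds to a straight line in its $(X,Y,Z)$ phase space joining $P_2$ to the interface point $P_1^{\gamma}$ with $\gamma=(m\sigma_*+m+1)/\sigma_*$.) Your local checks of (P2) and of the interface condition, including $A^{m-1}=(m-1)\xi_1/m\sigma$, agree with the paper's Lemma \ref{lem.2}.

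The existence half for general $\sigma>0$, however, is only a strategy outline, and the steps you defer are exactly where the paper's Section \ref{sec.exist} does its work; three concrete gaps remain. First, parametrising "local interface solutions by $\xi_1$" presumes existence and \emph{uniqueness} of the backward solution from the degenerate data $f(\xi_1)=0$, $(f^m)'(\xi_1)=0$ — this is not automatic, since the equation degenerates at $f=0$; the paper obtains it (Lemma \ref{lem.uniq}) from the local analysis at the finite critical points $P_1^{\gamma}$, where exactly one orbit enters from outside the invariant plane $\{X=0\}$. Note that in your coordinates $Y=\xi^2/P$, $W=\xi^{\sigma+2}/P$, the interface regime is $Y,W\to\infty$, $X\to-\infty$, a non-compact boundary where no finite critical-point analysis is available; the paper's normalisation ($X\propto\xi^{-2}f^{m-1}$, $Z\propto\xi^{\sigma}$) is chosen precisely so that each interface location becomes a finite critical point. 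Second, the "opposite behaviours" as $\xi_1\to0^+$ and $\xi_1\to\infty$ are asserted, not proved; these are the paper's Propositions \ref{prop.close} and \ref{prop.far}, and their proofs require specific barrier computations (for small $\xi_1$: the flow across $\{Y=0\}$ has sign $X(1-Z)$ and $Z<\gamma<1$ along the orbit into $P_1^{\gamma}$, so the profile is monotone; for large $\xi_1$: after the reduction $W=XZ$, a continuity argument from the unstable node $Q_2$ inside $\{X=0\}$ combined with the plane $\{Y=-\beta/2\alpha\}$ shows such profiles vanish backward with positive slope). Third, your intermediate-value step needs the boundary analysis at the threshold $\eta_0=\sup A$: one must exclude a vertical asymptote at $\xi=0$ (the paper uses $(f^m)''/f'\to-\infty$ against \eqref{SSODE}), exclude a backward zero with positive slope (otherwise $\eta_0$ would lie in the open set of "sign-changing" interface points, contradicting $\eta_0=\sup A$), and force $f'(0)=0$ when $f(0)>0$ (openness of the set with $f'(0)>0$). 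Until these three items are supplied, the existence claim for arbitrary $\sigma>0$ is not established, though your plan is the right one and coincides in outline with the paper's backward-shooting argument.
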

We plot in Figure \ref{fig0} the explicit self-similar solution with profile $f_*$ defined by \eqref{expl.prof}, taken at different times.
\begin{figure}[ht!]
  % Requires \usepackage{graphicx}
  \begin{center}
  \includegraphics[width=13.5cm,height=7.5cm]{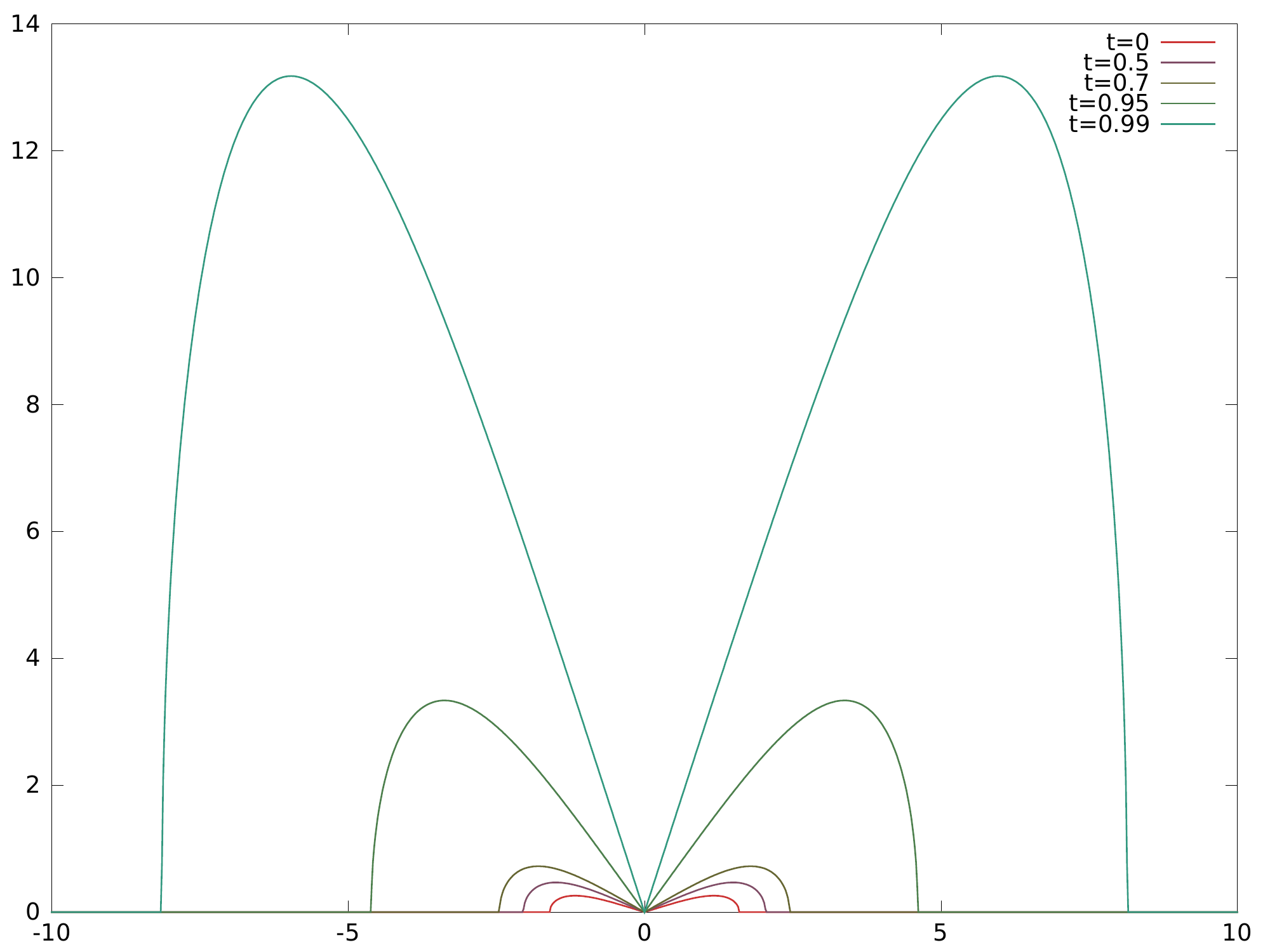}
  \end{center}
  \caption{The explicit self-similar solution with profile $f_*$. Experiment for $m=3$}\label{fig0}
\end{figure}
Apart from the mere existence of good solutions to our problem, Theorem \ref{th.exist} raises a very natural and interesting question: for which $\sigma>0$ there are good profiles satisfying property (P2) in Definition \ref{def1} and for which $\sigma$ (if any) there exist good profiles satisfying property (P1). In the more general case $1<p<m$ studied in \cite{IS2}, a partial answer to this question is easy to obtain: since for Eq. \eqref{eq1.gen} with $\sigma=0$ it was established in \cite[Theorem 2, p. 187]{S4} that good profiles satisfying (P1) exist, arguments based on continuity with respect to $\sigma$ show that at least in some interval $\sigma\in(0,\sigma_0)$ for some $\sigma_0>0$, there also exist good profiles satisfying property (P1) (see \cite[Theorem 1.3 and Section 4]{IS2}). But in the present case $p=1$, there is a very important detail: we \emph{cannot use the homogeneous case} $\sigma=0$ as a limit case, since (as already explained) in Eq. \eqref{eq1.hom} there is no finite time blow up. However, for $\sigma>0$ sufficiently small it is still true that all the good profiles with interface satisfy property (P1) in Definition \ref{def1}.
\begin{theorem}[Good profiles with interface for $\sigma>0$ small]\label{th.small}
There exists $\sigma_0>0$ sufficiently small such that, for any $\sigma\in(0,\sigma_0)$, all the good profiles with interface $f$ in the sense of Definition \ref{def1} satisfy
$$
f(0)=A>0, \quad f'(0)=0.
$$
\end{theorem}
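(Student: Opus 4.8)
The plan is to analyze the behavior of good profiles with interface near the origin $\xi = 0$, showing that property (P2) becomes incompatible with the structure of the ODE \eqref{SSODE} when $\sigma$ is small, thereby forcing all such profiles to satisfy (P1). I would set up a phase-space (dynamical systems) analysis of \eqref{SSODE}, since the paper announces a geometric approach. First, I would recast \eqref{SSODE} as an autonomous system by introducing suitable variables such as $X = \xi f'/f$ (or $X = \xi (f^m)'/f^m$) together with variables capturing the reaction term $\xi^\sigma$ and the diffusion balance, obtaining a first-order system whose critical points correspond to the admissible local behaviors of $f$ at $\xi = 0$, at interfaces, and possibly at infinity. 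The behaviors (P1) and (P2) should each correspond to a specific critical point of this system, and the existence of trajectories connecting the relevant critical points to the interface point encodes the existence of good profiles of each type.

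The key step is to understand the local behavior at the origin. A profile satisfying (P2) has $f(0) = 0$, and from \eqref{SSODE} the dominant balance near $\xi = 0$ between the diffusion term $(f^m)''$ and the self-similar term $-\alpha f + \beta \xi f'$ determines the leading power; I expect $f(\xi) \sim c\,\xi^{\gamma}$ for some $\gamma$ depending on $m$ and the exponents \eqref{SSexp}. Plugging in $\xi^\gamma$ and matching the lowest-order terms of $(f^m)'' $ and the linear self-similar part (the reaction term $\xi^\sigma f$ being higher order near $0$) should fix $\gamma = 2/(m-1)$, consistent with the explicit profile $f_*$. The crucial observation is then that the admissibility of this (P2)-type local solution — in particular its connection, as a trajectory in phase space, to a good profile with an interface at a finite $\xi_1 > 0$ — depends on $\sigma$, and I would track how the relevant critical point and its unstable manifold move as $\sigma \to 0$.

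The heart of the argument is a continuity/degeneration argument as $\sigma \to 0^+$. I would show that, in the phase-space picture, the critical point associated with the (P2) behavior either ceases to exist, loses the dimension of unstable manifold needed to emit a trajectory reaching an interface, or its emanating trajectory is forced to escape to the (P1) critical point, for all $\sigma$ below a threshold $\sigma_0$. Concretely, I expect to identify a monotonicity or sign condition — e.g. an inequality among the exponents $\alpha, \beta, \gamma, \sigma$ governing whether the (P2) trajectory enters the positivity region and can subsequently hit an interface — that fails for small $\sigma$. Since $\alpha = (\sigma+2)/(\sigma(m-1))$ blows up like $2/(\sigma(m-1))$ as $\sigma \to 0$ while $\beta = 1/\sigma$ also diverges, the dominant balance of the system degenerates in a controlled way, and I would exploit this divergence to rule out (P2). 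Having excluded (P2), I would invoke Theorem~\ref{th.exist}, which guarantees at least one good profile with interface exists for every $\sigma > 0$; combined with the dichotomy of Definition~\ref{def1} (a good profile satisfies (P1) or (P2)), the exclusion of (P2) forces \emph{every} good profile with interface to satisfy (P1), namely $f(0) = A > 0$, $f'(0) = 0$.

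The main obstacle I anticipate is making the exclusion of (P2) \emph{uniform} in a neighborhood of $\sigma = 0$ and genuinely rigorous rather than merely asymptotic. The phase-space system depends singularly on $\sigma$ through the diverging exponents, so the limit $\sigma \to 0$ is not a regular perturbation; I cannot simply invoke continuous dependence on parameters as in the $p > 1$ case, precisely because (as the authors emphasize) the homogeneous equation \eqref{eq1.hom} has no finite-time blow up and hence no limiting profile to perturb from. The hard part will therefore be to find the right intrinsic quantity — likely a Lyapunov-type function or an invariant region in phase space whose geometry is controlled by a single sharp inequality in $\sigma$ — that rules out any (P2) trajectory from reaching an interface while remaining positive, and to verify that this inequality holds throughout $(0, \sigma_0)$ for an explicitly estimable (even if not optimal) $\sigma_0$.
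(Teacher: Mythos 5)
Your high-level skeleton --- pass to a phase space, exclude (P2) profiles for small $\sigma$, then combine Theorem \ref{th.exist} with the dichotomy in Definition \ref{def1} --- is indeed the paper's strategy, but as a proof the proposal has two concrete gaps. First, your dominant-balance analysis at the origin is incomplete: besides $f(\xi)\sim C\xi^{2/(m-1)}$ (the critical point $P_2$ of the system \eqref{PSsyst}, Lemma \ref{lem.3}), there is a second (P2)-admissible family $f(\xi)\sim k\xi^{(\sigma+2)/(m-1)}$ for arbitrary $k>0$, obtained by balancing the first-order part $-\alpha f+\beta\xi f'$ alone (note $\alpha/\beta=(\sigma+2)/(m-1)$); this is the family emanating from the point $P_0$ (Lemma \ref{lem.1}), and it is precisely the origin behavior of the interface profiles of Theorem \ref{th.large} for large $\sigma$. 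An argument ruling out only the $\xi^{2/(m-1)}$ branch proves nothing; both families must be excluded, and the paper's Proposition \ref{prop.small} treats the orbits out of $P_0$ and $P_2$ simultaneously.

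Second, the exclusion mechanisms you list would fail. The critical points carrying the (P2) behaviors neither cease to exist nor lose unstable dimensions as $\sigma\to0^+$, and (P2)-type \emph{profiles} exist for every $\sigma>0$: Theorem \ref{th.tail} produces, for all $\sigma$, positive solutions with $f(\xi)\sim C\xi^{(\sigma+2)/(m-1)}$ at the origin. What changes for small $\sigma$ is not local admissibility but the \emph{global fate} of these orbits: the paper traps every orbit leaving $P_0$ or $P_2$ inside an explicit region bounded by the surface $\{XZ=k_1\}$ with $k_1=(m-1)^2/4(\sigma+2)^2$ and the planes $\{Y=0\}$, $\{Y=-\beta/2\alpha\}$, $\{X=X(P_2)\}$, $\{Y+Z/(1+\sigma)=1\}$, and shows by monotonicity of $X$ and $Z$ that the orbit must enter the point $Q_4$ at infinity, i.e.\ the profile is positive everywhere with the tail \eqref{tail} and has \emph{no interface at all} --- not that it "escapes to the (P1) critical point,'' which is a source and cannot receive it. The single sharp inequality you correctly anticipated but did not produce is the comparison of the crossing point of $\{Y=0\}$ with the barrier, $\frac{(1+\sigma)(m-1)^2\sigma}{2(m+1)(\sigma+2)}<k_1$, equivalent to $\sigma(\sigma+1)(\sigma+2)<m+1$, which gives an explicit $\sigma_0$. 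Finally, your worry about singular dependence on $\sigma$ is overstated: in the variables \eqref{ph.space} the system \eqref{PSsyst} depends on $\sigma$ only through $\beta/\alpha=(m-1)/(\sigma+2)$ and the coefficient $\sigma$ in $\dot Z$, both regular at $\sigma=0$; the genuine obstruction (which the authors stress) is merely that at $\sigma=0$ there is no blow-up profile to perturb from, so continuity from the homogeneous case is unavailable --- the barrier construction is what replaces it, and without it your proposal remains a program rather than a proof.
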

With $\sigma$ sufficiently large, the behavior at $\xi=0$ of the good profiles with interface changes strongly. We already noticed that there exists an explicit profile with interface $f_*$ given in \eqref{expl.prof} which starts from $f(0)=0$. The next theorem gives a positive answer to the natural question whether there are more profiles of the same kind or not.
\begin{theorem}[Good profiles with interface for $\sigma>0$ large]\label{th.large}
There exists $\sigma_1>0$ sufficiently large such that, for any $\sigma\in(\sigma_1,\infty)$, there exists a good profile with interface $f$ in the sense of Definition \ref{def1} such that
\begin{equation}\label{buinf}
f(0)=0, \ \ (f^m)'(0)=0 \ \ {\rm and} \quad f(\xi)\sim C\xi^{(\sigma+2)/(m-1)}, \ \ {\rm as} \ \xi\to0.
\end{equation}
for some positive constant $C$.
\end{theorem}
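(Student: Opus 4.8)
The plan is to recast the profile equation \eqref{SSODE} as an autonomous dynamical system and to realize the profile claimed in \eqref{buinf} as an orbit joining two critical points: one encoding the behavior $f(\xi)\sim C\xi^{(\sigma+2)/(m-1)}$ at $\xi=0$, the other encoding the interface of Definition \ref{def1}. Writing $\eta=\ln\xi$, denoting by a dot the derivative with respect to $\eta$, and introducing the quotient variables
\begin{equation}\label{plan.vars}
X=\frac{\xi f'}{f},\qquad Y=\xi^{2}f^{1-m},\qquad Z=\xi^{\sigma+2}f^{1-m},
\end{equation}
a direct computation turns \eqref{SSODE} into the quadratic system
\begin{equation}\label{plan.sys}
\begin{aligned}
\dot X&=X-mX^{2}+\frac{1}{m}(\alpha-\beta X)Y-\frac{1}{m}Z,\\
\dot Y&=Y\bigl(2-(m-1)X\bigr),\\
\dot Z&=Z\bigl(\sigma+2-(m-1)X\bigr).
\end{aligned}
\end{equation}
The exponent $\gamma:=(\sigma+2)/(m-1)$ in \eqref{buinf} is precisely the one for which the lowest-order terms $-\alpha f+\beta\xi f'$ of \eqref{SSODE} cancel, and one checks from \eqref{SSexp} that $\alpha=\beta\gamma$; this identity is used repeatedly below.

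I would then locate and classify the critical points of \eqref{plan.sys}. The finite ones are the origin $O=(0,0,0)$ (which carries the P1 behavior $f(0)=a>0$), the point $(1/m,0,0)$ (a non-good behavior $f\sim c\xi^{1/m}$ with nonzero flux at $\xi=0$), and a point on the plane $\{X=2/(m-1)\}$. The two behaviors relevant here, however, sit on the sphere at infinity: as $\xi\to0$ the profile \eqref{buinf} gives $X\to\gamma$, $Z\to C^{1-m}>0$ and $Y\to+\infty$, the coefficient $\alpha-\beta X$ of $Y$ in \eqref{plan.sys} vanishing exactly because $\alpha=\beta\gamma$; while the standard porous-medium interface behavior $f\sim\bigl[A(\xi_1-\xi)\bigr]^{1/(m-1)}$ forces $X\to-\infty$, $Y\to+\infty$, $Z\to+\infty$. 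I would therefore perform a Poincaré-type compactification so that these become genuine critical points $Q_0$ and $Q_1$, and linearize. The local outputs I expect are that $Q_0$ releases, in the direction of increasing $\eta$, a one-parameter (in $C$) family of orbits which are exactly the local profiles $f\sim C\xi^{\gamma}$, and that $Q_1$ is reached along a one-dimensional stable direction by precisely the profiles possessing an interface.

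With the local pictures in hand, the core of the argument is to prove that, for $\sigma$ large, (at least) one orbit leaving $Q_0$ actually reaches $Q_1$. Here I would exploit the scale separation produced by a large exponent together with $\beta=1/\sigma\to0$: the weight $\xi^{\sigma}$ is uniformly small on compact subsets of $[0,1)$ and very large on $(1,\infty)$, so in the bulk of $(0,1)$ the variable $Z$ is negligible and the orbit nearly follows the reaction-free flow $(f^m)''-\alpha f+\beta\xi f'=0$, along which $f^m$ is convex and $f$ rises from $0$ while staying positive; once $\xi$ crosses the transition layer near $\xi=1$, the strong source $\xi^{\sigma}f$ makes $(f^m)''$ large and negative, bending $f^m$ down to an interface at a finite $\xi_1$. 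I would formalize this by constructing a trapping region for \eqref{plan.sys} containing the orbit issued from $Q_0$, in which $Z$ grows and $X$ is driven downward through the level $2/(m-1)$, thereby excluding the only competing possibility, namely that the orbit persists for all $\eta$ and produces a profile remaining positive up to $\xi=+\infty$ (a profile without interface).

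The main obstacle, and the step where the largeness of $\sigma$ is genuinely needed, is the combination of the analysis at $Q_0$ with the global control of the connecting orbit. The point $Q_0$ is degenerate: it lies at infinity and the product $(\alpha-\beta X)Y$ in \eqref{plan.sys} is an indeterminate form there, so extracting the relevant invariant manifold and the sharp asymptotics $f\sim C\xi^{\gamma}$ requires a center-manifold or blow-up computation rather than a naive linearization. Moreover, unlike the case $1<p<m$ of \cite{IS2}, the homogeneous limit $\sigma=0$ in \eqref{eq1.hom} is unavailable as a perturbative anchor, since it carries no finite-time blow up; the quantitative effect of $\sigma\to\infty$ must replace it, and Theorem \ref{th.exist} with the explicit profile \eqref{expl.prof} guarantees a P2 profile only for the single value $\sigma=\sigma_*$, not on an interval. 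Making the trapping region work uniformly for all large $\sigma$ is thus where the real difficulty lies.
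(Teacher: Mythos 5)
Your change of variables is internally consistent (the system you derive is correct, and the identification of the relevant asymptotic regimes with points at infinity is right), but the global step --- the actual content of the theorem --- is both missing and based on a false dichotomy. You propose a trapping region that, for $\sigma$ large, drives every orbit issued from your $Q_0$ down to the interface point $Q_1$, arguing that the only competing scenario is a profile remaining positive for all $\xi$. This overlooks a third, and in fact dominant, behavior: the profile can vanish at a finite $\xi_0$ with $f'(\xi_0)<0$ and $(f^m)'(\xi_0)\neq 0$ (in the paper's phase space, entering the stable node $Q_3$; see Lemma \ref{lem.6}). The interface condition $(f^m)'(\xi_1)=0$ is a codimension-one constraint --- by Lemma \ref{lem.2} only one orbit enters each $P_1^{\gamma}$ from outside the invariant plane --- so no open trapping region can capture it; an orbit ``bent down'' by the strong reaction generically crosses zero with nonzero slope. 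Worse, your claim that for large $\sigma$ \emph{all} orbits out of $Q_0$ reach the interface contradicts Theorem \ref{th.tail}, which gives, for \emph{every} $\sigma>0$, orbits from $P_0$ (your $Q_0$) connecting to $Q_4$, i.e.\ profiles positive everywhere with the tail \eqref{tail}.

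The paper's mechanism is essentially the opposite of trapping: the interface orbit arises as a \emph{boundary case} in a topological shooting over the one-parameter family of orbits out of $P_0$ (parametrized by $k$ in $Z\sim kX$). The ingredients are: (i) Lemma \ref{lem.large2}, the hard step where largeness of $\sigma$ actually enters --- for $\sigma>\sigma_1$ the unique orbit out of $P_2$ enters $Q_3$, proved by excluding tangencies to the planes $\{Y=y_0\}$ via a second-derivative computation ($Y''<0$ at a hypothetical tangency point on the critical hyperbola) plus a transversality argument ruling out termination at interface points except for a finite set of $\sigma$; (ii) Lemma \ref{lem.large3}, a connection $P_0\to P_2$ inside $\{Z=0\}$, which by continuity transfers the $Q_3$-behavior to some orbits out of $P_0$; (iii) Theorem \ref{th.tail}, guaranteeing orbits from $P_0$ to $Q_4$ for all $\sigma$; and (iv) Lemma \ref{lem.large4}, where the sets of $k$ leading to $Q_4$ and to $Q_3$ are open and nonempty, so their complement --- orbits landing on some interface point $P_1^{\gamma}$ --- is nonempty. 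None of these steps has an analogue in your plan, and your own closing admission that making the trapping region work ``is where the real difficulty lies'' concedes the core of the proof. (A minor misreading besides: Theorem \ref{th.exist} gives a good profile with interface for \emph{every} $\sigma>0$, the value $\sigma_*$ being special only in that the profile is explicit and of type (P2).)
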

In Figure \ref{fig1} we represent typical profiles in both cases $\sigma>0$ small (left, corresponding to the results of Theorem \ref{th.small}) and large (right, corresponding to the results of Theorem \ref{th.large}), performing a shooting from the interface point. This is in fact the way existence is proved, see Section \ref{sec.exist}.

\begin{figure}[ht!]
  % Requires \usepackage{graphicx}
  \begin{center}
  \includegraphics[width=15cm,height=9cm]{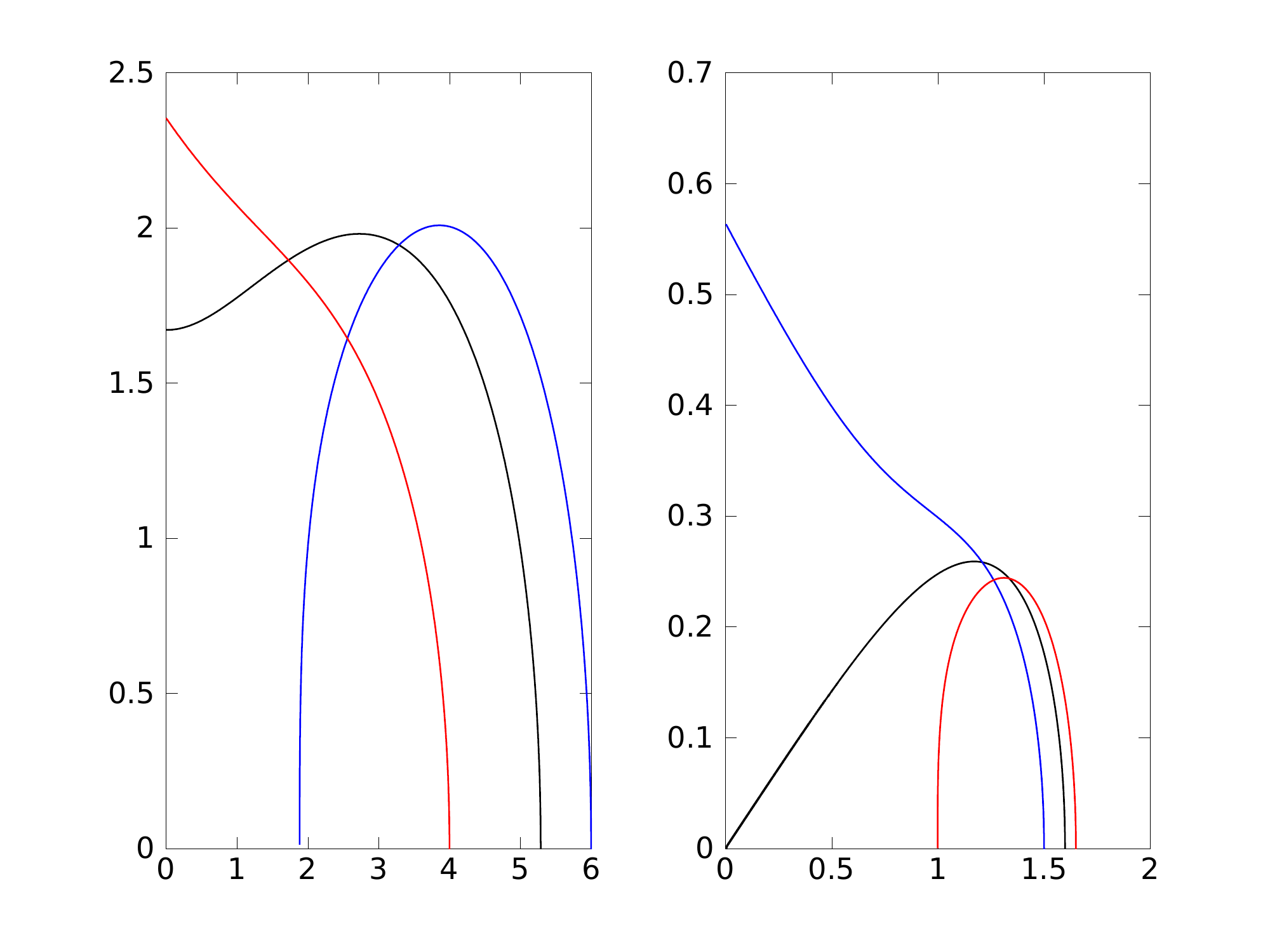}
  \end{center}
  \caption{Typical self-similar profiles for $\sigma$ small (left) and $\sigma$ large (right)}\label{fig1}
\end{figure}

\noindent \textbf{Remark.} Let us notice that the profile given by Theorem \ref{th.large} present a different behavior near $\xi=0$ than the explicit profile $f_*$. Indeed,
\begin{equation}\label{buzero}
f_*(\xi)\sim\left[\frac{m-1}{2m(m+1)}\right]^{1/(m-1)}\xi^{2/(m-1)}, \quad {\rm as} \ \xi\to0,
\end{equation}
which is a different first order power than in \eqref{buinf}. As we shall see below, these two behaviors are even qualitatively different concerning the blow up behavior. One very interesting conjecture is that $f_*$ is the unique good profile with interface having a local behavior near $\xi=0$ as in \eqref{buzero}.

\medskip

\noindent \textbf{Profiles with tails as $\xi\to\infty$.} Apart from the good profiles with interface, there is another bunch of interesting blow up profiles to \eqref{eq1}, that are positive everywhere and have a special tail at infinity, more precisely
\begin{equation}\label{tail}
f(\xi)\sim K\xi^{(\sigma+2)/(m-1)}e^{-\xi^{\sigma}}, \quad {\rm as} \ \xi\to\infty.
\end{equation}
These profiles have a spatial decay which is neither algebraic, nor purely exponential, but mixing an exponential decay with a precise algebraic power. Such profiles usually appear only in very special, critical cases (see for example the recent paper \cite{IL17} for another example of such special tail behavior for an absorption-diffusion equation with critical absorption exponent) but they do not exist for the homogeneous reaction-diffusion equation \eqref{eq1.hom}. More precisely, we prove
\begin{theorem}\label{th.tail}
For any $\sigma>0$, there exists at least a profile $f(\xi)$ solution to \eqref{SSODE} behaving as in \eqref{tail} as $\xi\to\infty$, and such that
$$
f(0)=0, \quad f(\xi)\sim C\xi^{(\sigma+2)/(m-1)}, \quad {\rm as} \ \xi\to0,
$$
for some $C>0$. Moreover, there exists $\sigma_0>0$ such that for any $\sigma\in(0,\sigma_0)$, \textbf{all} the good profiles \textbf{satisfying assumption (P2)} in Definition \ref{def1} present the tail behavior \eqref{tail} as $\xi\to\infty$.
\end{theorem}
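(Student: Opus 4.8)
The plan is to read \eqref{SSODE} as a two-point connection problem: match the behaviour forced at $\xi=0$ by condition (P2) to the prescribed decay \eqref{tail} at $\xi=\infty$, and extract the profiles by a shooting argument organised through a phase-plane analysis. I would begin with the local study at the origin. Trying $f(\xi)\sim C\xi^{\gamma}$ in \eqref{SSODE}, the middle terms $-\alpha f+\beta\xi f'$ cancel to leading order exactly when $\beta\gamma=\alpha$, that is $\gamma=(\sigma+2)/(m-1)$ by \eqref{SSexp}, while the diffusion and reaction terms sit at the strictly higher order $\xi^{\gamma+\sigma}$ since $m\gamma-2=\gamma+\sigma$. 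A contraction argument on the integral formulation upgrades this to a genuine one-parameter family $\{f_C\}_{C>0}$ of solutions of \eqref{SSODE} satisfying (P2) with $f_C(\xi)\sim C\xi^{\gamma}$ as $\xi\to0$, which already gives the claimed local behaviour at the origin.

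Next I would classify the behaviour as $\xi\to\infty$. Dropping the (exponentially subdominant, since $f^m\sim e^{-m\xi^{\sigma}}$) diffusion term reduces \eqref{SSODE} to $\beta\xi f'=(\alpha-\xi^{\sigma})f$, whose solutions are precisely $K\xi^{\alpha/\beta}e^{-\xi^{\sigma}/(\beta\sigma)}=K\xi^{(\sigma+2)/(m-1)}e^{-\xi^{\sigma}}$, using $\alpha/\beta=(\sigma+2)/(m-1)$ and $\beta\sigma=1$. This is exactly where the leading cancellation $\beta\xi f'+\xi^{\sigma}f\approx0$, corrected at next order by $-\alpha f$, manufactures the special tail \eqref{tail}. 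I would then show that the full second-order equation carries this decaying tail as a codimension-one family and rule out every competing behaviour for positive solutions — both algebraic growth and $e^{+\xi^{\sigma}}$ growth are incompatible with the admissible dominant balances — so that a globally positive profile is forced to exhibit \eqref{tail}, the only alternative being extinction at a finite interface.

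For the existence statement, valid for every $\sigma>0$, I would shoot in the parameter $C$. By continuous dependence the set of $C$ for which $f_C$ reaches an interface at a finite point is open, as is the set for which $f_C$ escapes the positive tail-bound regime; showing that both alternatives occur and that the connecting profile appears as their separatrix (equivalently, as the value of $C$ that drives the interface position to $+\infty$) yields a profile with the behaviour in the statement. Geometrically this is cleanest to package by compactifying the phase plane of \eqref{SSODE} and realising the origin behaviour and the tail behaviour as two critical points joined by a heteroclinic orbit. For the rigidity statement I fix $\sigma$ small: by Theorem \ref{th.small} every good profile with interface then satisfies (P1), so no profile satisfying (P2) can reach an interface and each $f_C$ stays positive on $(0,\infty)$; combined with the exclusion of growth from the previous step, every $f_C$ is globally positive and hence carries the tail \eqref{tail}.

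The main obstacle I anticipate is the phase-plane step itself. Because \eqref{SSODE} mixes the two incommensurate scales $\xi$ and $\xi^{\sigma}$, no single rescaling autonomises it, so the natural autonomous system is three-dimensional with the two relevant equilibria located at infinity, forcing a careful compactification. The two delicate points are then (i) proving that the orbit leaving the origin cannot escape into the growth regime before entering the tail regime, and (ii) making the estimates of the existence step uniform as $\sigma\to0$, so that the separatrix/rigidity dichotomy closes cleanly and the threshold $\sigma_0$ can be taken as in Theorem \ref{th.small}.
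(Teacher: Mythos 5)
Your local analyses are sound (the balance $\gamma=(\sigma+2)/(m-1)$ at the origin, the first-order balance $\beta\xi f'\approx(\alpha-\xi^{\sigma})f$ producing the tail, and the check $m\gamma-2=\gamma+\sigma$ all match the paper's Lemmas \ref{lem.1} and \ref{lem.Q41}), but your existence step for general $\sigma>0$ has a genuine gap. The separatrix/shooting argument in the parameter $C$ requires knowing that neither alternative (interface, sign change) exhausts the whole parameter range; openness of the two bad sets plus connectedness of $(0,\infty)$ only helps if you can exclude, say, that \emph{every} $f_C$ vanishes with negative slope (i.e.\ every orbit from $P_0$ enters $Q_3$). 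You give no mechanism for this, and it is not a removable technicality: in the paper's proof of Theorem \ref{th.large}, the emptiness of exactly this degenerate set (the set $C$ there) is \emph{deduced from} Theorem \ref{th.tail}, so a shooting proof of Theorem \ref{th.tail} that presupposes it is circular in spirit. The paper avoids shooting altogether here: in Lemma \ref{lem.Q42}, after the reduction $W=XZ$, the non-hyperbolic point $Q_4$ exhibits an \emph{elliptic sector} (classified via Lyagina's phase portraits), whose cycles are orbits that go out of the origin and return to it tangent to the center manifold; read back in the original variables these are precisely $P_0\to Q_4$ connections, giving, for \emph{every} $\sigma>0$ and with no dichotomy argument, a profile with $f(\xi)\sim C\xi^{(\sigma+2)/(m-1)}$ at the origin and the tail \eqref{tail} at infinity (the precise equivalence with $K>0$, not just an upper bound, comes from the explicit integration $cK(X)e^{-\frac{\sigma+2}{\sigma}K(X)}=X$ on the center manifold, a step your ``codimension-one family'' claim gestures at but does not carry out).

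Your rigidity step for small $\sigma$ has a second, independent gap: from Theorem \ref{th.small} you conclude that ``no profile satisfying (P2) can reach an interface and each $f_C$ stays positive on $(0,\infty)$'', but Theorem \ref{th.small} only classifies good profiles \emph{with interface} (where $f$ and $(f^m)'$ vanish together). It says nothing about a (P2) profile that vanishes transversally, $f(\xi_0)=0$ with $f'(\xi_0)<0$ --- the $Q_3$ alternative of Lemma \ref{lem.6} --- which is not an interface, so positivity of the $f_C$ does not follow. The paper closes exactly this hole with Proposition \ref{prop.small}: the barrier construction (the surface $\{XZ=k_1\}$ with $k_1=(\beta/2\alpha)^2$, the planes $\{Y=-\beta/2\alpha\}$, $\{Y=0\}$, $\{X=X(P_2)\}$, $\{Y=Y(P_2)\}$ and $\{Y+Z/(1+\sigma)=1\}$) traps every orbit leaving $P_0$ or $P_2$ in the region $\{XZ<k_1,\,-\beta/2\alpha<Y<0\}$, whose monotonicity properties force entry into $Q_4$, thereby excluding both the interface and the sign-change endings at once; this is also where the explicit threshold (the root of $x(x+1)(x+2)=m+1$, together with $\sigma<2$) comes from. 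To repair your argument you would need a substitute for this trapping region, not merely the statement of Theorem \ref{th.small}.
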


\medskip

\noindent \textbf{Blow up behavior. Occurence of blow up at space infinity.} Having already classified the good self-similar profiles with interface, we can extract some very interesting conclusions concerning the blow up set of these profiles. It is obvious from their formula that the solutions to \eqref{eq1} with profiles as in Theorem \ref{th.small} present a global blow up. However, things are more complex when dealing with solutions to \eqref{eq1} having good profiles with interface satisfying assumption (P2) in Definition \ref{def1}. Take a solution having the form
$$
u(x,t)=(T-t)^{-\alpha}f(|x|(T-t)^{\beta}), \quad f(0)=0, \ (f^m)'(0)=0.
$$
Then for any $x\neq0$ fixed, $|x|(T-t)^{\beta}\to0$ as $t\to T$, that is, close to the blow up time $T\in(0,\infty)$, the behavior of $f(\xi)$ near $\xi=0$ becomes essential. For a generic solution $u$ to \eqref{eq1} with initial condition $u_0(x):=u(x,0)$ and blow up time $T\in(0,\infty)$, we define its \emph{blow up set} as in \cite[Section 24]{QS} by
\begin{equation}\label{BUS}
B(u_0):=\{x\in\real: \exists(x_k,t_k)\in\real\times(0,T), \ t_k\to T, \ x_k\to x, \ {\rm and} \  |u(x_k,t_k)|\to\infty, \ {\rm as} \ k\to\infty\}.
\end{equation}
We thus have two different cases:

$\bullet$ if $f(\xi)\sim C\xi^{2/(m-1)}$ as $\xi\to0$, as it is the case of the explicit profile \eqref{expl.prof}, then for $x\neq0$ fixed,
$$
u(x,t)\sim C(T-t)^{-\alpha+2\beta/(m-1)}|x|^{2/(m-1)}=C(T-t)^{-1/(m-1)}|x|^{2/(m-1)}, \quad {\rm as} \ t\to T,
$$
thus we infer that these solutions still blow up globally, that is $B(u_0)=\real$ according to the definition of the blow up set \eqref{BUS}.

$\bullet$ if $f(\xi)\sim C\xi^{(\sigma+2)/(m-1)}$ as $\xi\to0$, as it is the case of the good profiles with interface for $\sigma>\sigma_1$ in Theorem \ref{th.large}, then for $x\neq0$ fixed
$$
u(x,t)\sim C(T-t)^{-\alpha+(\sigma+2)\beta/(m-1)}|x|^{(\sigma+2)/(m-1)}=C|x|^{(\sigma+2)/(m-1)}<\infty, \quad {\rm as} \ t\to T,
$$
hence these solutions remain bounded at any finite point $x$. However, they still blow up at $t=T$, but only on curves $x(t)$ depending on $t$ such that $x(t)\to\infty$ as $t\to T$. This phenomenon is known in literature as \textbf{blow up at (space) infinity}, which seems to have been considered for the first time by Lacey \cite{La84}, and some other cases where it has been established (even for semilinear reaction-diffusion equation with big initial data) can be found in \cite{GU05, GU06}. Exactly the same considerations as above concerning the blow up behavior apply also to the profiles with tail behavior in Theorem \ref{th.tail}.

\noindent \textbf{Remark.} We notice that the blow up rate at a fixed point in the first of two cases above is $(T-t)^{-1/(m-1)}$, while the standard blow up rate, that is the blow up rate of $\|u(t)\|_{\infty}$ as defined in \cite[Section 23]{QS}, is $(T-t)^{-\alpha}$ in both cases above. This unusual difference is due to the fact that in both cases above
$$
M(t):=\max\{u(x,t):x\geq0\}\to\infty, \quad {\rm as} \ t\to T,
$$
thus no finite point has a similar blow up behavior as $\|u(t)\|_{\infty}=u(M(t),t)$.

We thus conclude that the weight $|x|^{\sigma}$ has also a \emph{very strong influence on the qualitative blow up behavior}, as seen for the previous special solutions (which are expected to be prototypes of general solutions): for $\sigma$ sufficiently small, blow up occurs globally, but starting from a sufficiently large exponent $\sigma$, the weight becomes so strong that it produces the above mentioned phenomenon of blow up at space infinity, that is, that the maximum (hot spot) $M(t)$ of the solution $u(x,t)$ at any time $t$ translates very quickly towards infinity both in space and time and thus blows up before any other (bounded) point may do it.

\medskip

\noindent \textbf{A comment about the notation.} It is clear that $\alpha$ and $\beta$ in \eqref{SSexp} depend on $\sigma$. For simplicity, we will usually drop this dependence from the notation when we refer to a general $\sigma>0$ and there is no danger of confusion. However, when the dependence on $\sigma$ is importance, we will emphasize it by writing $\alpha(\sigma)$ and $\beta(\sigma)$ only in the parts where this dependence is relevant. Moreover, we denote by $\alpha_*$ and $\beta_*$ the self-similar exponents corresponding to the special value $\sigma_*=\sqrt{2(m+1)}$.

\medskip

\noindent \textbf{Organization of the paper.} After the Introduction, we continue with a long and rather technical Section \ref{sec.local}, where the phase space technique for studying and classifying the solutions to \eqref{SSODE} is employed. The quadratic, autonomous dynamical system to which \eqref{SSODE} is transformed has many critical points, both in the finite region and at space infinity, and Section \ref{sec.local} is devoted to the local analysis of the behavior of solutions going out or entering all these critical points. We stress here that we deal in particular with one non-hyperbolic critical point at infinity which is very important for the study of the solutions and that requires special attention and a deeper work. We then pass to the proofs of our Theorems, by establishing (global) connections in the phase space and undoing the change of variables to obtain the desired profiles. Theorem \ref{th.exist} is proved in Section \ref{sec.exist}, which is rather short as the proof follows closely (and partly simplify) the analogous one in \cite[Section 3]{IS2}. The proof of Theorem \ref{th.small} is given in Section \ref{sec.small} and it is very elegant, based on constructing a sequence of suitable geometric barriers (in form of hypersurfaces and planes) for the flow in the phase space in order to "oblige" some orbits to have the desired behavior. Finally, Section \ref{sec.large} is devoted to the study of \eqref{SSODE} for $\sigma$ large, and Theorems \ref{th.large} and \ref{th.tail} are proved there, employing shooting arguments. The paper ends with a short section of comments and open problems.

\section{The phase space. Local analysis}\label{sec.local}

In this rather technical section, we transform the non-autonomous equation \eqref{SSODE} into an autonomous quadratic dynamical system of three equations and we perform the local analysis of the phase space associated to this system. The global analysis of the phase space, establishing the connections between the several critical points and transforming them into profiles solutions to \eqref{SSODE}, will be performed later in the paper. Recalling that $\alpha$ and $\beta$ are given in \eqref{SSexp}, we set
\begin{equation}\label{ph.space}
X(\eta)=\frac{m}{\alpha}\xi^{-2}f^{m-1}(\xi), \ \ Y(\eta)=\frac{m}{\alpha}\xi^{-1}f^{m-2}(\xi)f'(\xi), \ \ Z(\eta)=\frac{1}{\alpha}\xi^{\sigma},
\end{equation}
where the new independent variable $\eta=\eta(\xi)$ is defined through the following differential equation
$$
\frac{d\eta}{d\xi}=\frac{\alpha}{m}\xi f^{1-m}(\xi).
$$
In this notation, and after some rather straightforward algebraic calculations that are left to the reader, Eq. \eqref{SSODE} transforms into the following system
\begin{equation}\label{PSsyst}
\left\{\begin{array}{ll}\dot{X}=X[(m-1)Y-2X],\\
\dot{Y}=-Y^2-\frac{\beta}{\alpha}Y+X-XY-XZ,\\
\dot{Z}=\sigma ZX,\end{array}\right.
\end{equation}
Notice that $X\geq0$ and $Z\geq0$, only $Y$ can change sign. Moreover, let us also notice that the planes $\{X=0\}$ and $\{Z=0\}$ are invariant for the system \eqref{PSsyst}, and that $Z$ is monotonic along any connection in the phase space associated to \eqref{PSsyst}. These totally trivial facts will be of a great use when performing the global analysis.

\bigskip

\noindent \textbf{Local analysis of the finite critical points in the phase space}. We readily notice that the system \eqref{PSsyst} has the following critical points in the plane: two isolated points
$$
P_0=(0,0,0), \ \ P_2=\left(\frac{m-1}{2(m+1)\alpha},\frac{1}{(m+1)\alpha},0\right)
$$
and two lines of critical points denoted by
$$
P_1^{\gamma}=\left(0,-\frac{\beta}{\alpha},\gamma\right), \ \ P_0^{\gamma}=(0,0,\gamma), \ \ {\rm for \ any } \ \gamma>0.
$$
\begin{lemma}[Analysis of the point $P_0=(0,0,0)$]\label{lem.1}
The system \eqref{PSsyst} in a neighborhood of the critical point $P_0$ has a one-dimensional stable manifold and a two-dimensional center manifold. The connections in the plane tangent to the center manifold go out of the point $P_0$ and contain profiles with the behavior:
\begin{equation}\label{beh.P0}
f(\xi)\sim k\xi^{\alpha/\beta}=k\xi^{(\sigma+2)/(m-1)}, \quad {\rm as} \ \xi\to 0, \ f(0)=0,
\end{equation}
for any constant $k>0$.
\end{lemma}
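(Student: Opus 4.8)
The plan is to treat $P_0$ as a non-hyperbolic equilibrium, combining the stable/center manifold theorem with a center manifold reduction, and then translating the resulting phase-space asymptotics back into the behaviour \eqref{beh.P0} of the profile $f$.

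First I would linearize \eqref{PSsyst} at $P_0=(0,0,0)$. A direct computation gives the Jacobian
$$
J(P_0)=\begin{pmatrix} 0 & 0 & 0 \\ 1 & -\beta/\alpha & 0 \\ 0 & 0 & 0\end{pmatrix},
$$
whose eigenvalues are $0$, $0$ and $-\beta/\alpha<0$. The eigenvalue $-\beta/\alpha$ has eigenvector $(0,1,0)$, while the double eigenvalue $0$ already possesses a two-dimensional eigenspace, spanned by $(\beta/\alpha,1,0)$ and $(0,0,1)$ (equivalently the plane $Y=(\alpha/\beta)X$); in particular there is no Jordan block. By the stable manifold theorem the negative eigenvalue produces a one-dimensional stable manifold, tangent to the $Y$-axis and lying inside the invariant plane $\{X=0\}$, hence degenerate and irrelevant for genuine profiles; by the center manifold theorem the double zero produces a two-dimensional center manifold tangent to the plane above. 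This establishes the first assertion of the lemma.

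Next I would analyze the flow on the center manifold, which is the crux of the argument. Writing the center manifold locally as a graph $Y=h(X,Z)$ with $h(X,Z)=(\alpha/\beta)X+\text{(higher order)}$ and using $(m-1)\alpha/\beta=\sigma+2$ (from \eqref{SSexp}), substitution into \eqref{PSsyst} yields the reduced system $\dot X=\sigma X^2+O(3)$ and $\dot Z=\sigma XZ$. Since $X\geq0$ along the flow, one has $\dot X\geq0$ near $P_0$, so $X$ is increasing and the orbits genuinely \emph{leave} $P_0$; moreover $\dot Z/\dot X=Z/X$ shows $Z\sim cX$ along each such orbit, giving a one-parameter family (indexed by $c>0$) of connections emanating from $P_0$, with $P_0$ attained as $\eta\to-\infty$. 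The monotonicity of $Z$ noted after \eqref{PSsyst} guarantees that $Z\to0$ as $\eta\to-\infty$, i.e. (recalling $Z=\xi^\sigma/\alpha$) that this limit corresponds exactly to $\xi\to0$.

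Finally I would undo the change of variables \eqref{ph.space}. From $X=(m/\alpha)\xi^{-2}f^{m-1}$ and $X\sim\rho Z=\rho\,\xi^\sigma/\alpha$, with $\rho=\lim X/Z\in(0,\infty)$, one gets $f^{m-1}\sim(\rho/m)\xi^{\sigma+2}$, hence $f(\xi)\sim k\xi^{(\sigma+2)/(m-1)}$ with $k=(\rho/m)^{1/(m-1)}$ and $f(0)=0$; letting $\rho$ (equivalently $c$) range over $(0,\infty)$ produces every $k>0$, which is precisely \eqref{beh.P0}. The main obstacle is the center manifold step: because of the double zero eigenvalue the reduced dynamics is itself degenerate (its linear part vanishes), so the direction of the flow must be extracted from the quadratic terms, and one must carefully justify the passage from the $\eta$-asymptotics on the center manifold to the $\xi$-asymptotics of $f$ through the singular reparametrization $d\eta/d\xi=(\alpha/m)\xi f^{1-m}$.
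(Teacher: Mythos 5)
Your proposal is correct and follows essentially the same route as the paper: linearization at $P_0$, the center manifold theorem for the double zero eigenvalue, reduction of the flow on the center manifold to $\dot{X}=\sigma X^2+O(3)$, $\dot{Z}=\sigma XZ$, integration to get $Z\sim kX$, and translation back through \eqref{ph.space} to obtain \eqref{beh.P0}. The only (inessential) difference is that you parametrize the center manifold directly as a graph $Y=h(X,Z)$ tangent to the center eigenspace $\{Y=(\alpha/\beta)X\}$, correctly observing that the tangent plane alone determines the leading-order reduced dynamics, whereas the paper introduces the auxiliary variable $W=(\beta/\alpha)Y-X$ and computes the quadratic coefficients of $h$ explicitly.
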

\begin{proof}
The proof follows closely the one of Lemma 2.1 in \cite{IS2}. The linearization of the system \eqref{PSsyst} near $P_0$ has the matrix
$$
M(P_0)=\left(
      \begin{array}{ccc}
        0 & 0 & 0 \\
        1 & -\beta/\alpha & 0 \\
        0 & 0 & 0 \\
      \end{array}
    \right)
$$
hence it has a one-dimensional stable manifold (corresponding to the eigenvalue $-\beta/\alpha$) and a two-dimensional center manifold. Since we are interested in the orbits going out of $P_0$ in the phase space (if any), we will analyze this center manifold and the flow on it following the recipe given in \cite[Theorem 1, Section 2.12]{Pe}. Introducing the new variable
$$
W:=\frac{\beta}{\alpha}Y-X,
$$
we obtain after direct calculations (that we omit here) that the system \eqref{PSsyst} in the variables $(X,W,Z)$ becomes
\begin{equation}\label{interm0}
\left\{\begin{array}{ll}\dot{X}=(\sigma+2)XW+\sigma X^2,\\
\dot{W}=-\frac{m-1}{\sigma+2}W-\frac{\sigma+2}{m-1}W^2-\frac{m\sigma+3m+\sigma+1}{m-1}XW-\frac{m\sigma+m+1}{m-1}X^2-\frac{m-1}{\sigma+2}XZ,\\
\dot{Z}=\sigma XZ.\end{array}\right.
\end{equation}
We look for a center manifold of the type
$$
W=h(X,Z):=aX^2+bXZ+cZ^2+O(|(X,Z)|^3).
$$
Following the Local Center Manifold Theorem \cite[Theorem 1, Section 2.12]{Pe}, we have to identify only the quadratic terms in $X$ and $Z$ to deduce that the center manifold is given by
$$
h(X,Z)=-\frac{(m\sigma+m+1)(\sigma+2)}{(m-1)^2}X^2-XZ+O(|(X,Z)|^3),
$$
and the flow on the center manifold in a neighborhood of the critical point $P_0$ is given by the reduced system
$$
\left\{\begin{array}{ll}\dot{X}=\sigma X^2+O(|(X,Z)|^3),\\
\dot{Z}=\sigma XZ+O(|(X,Z)|^3),\end{array}\right.
$$
that can be readily integrated up to first order to give $Z\sim kX$ for the profiles going out of $P_0$ on the center manifold, where $k>0$ can be any constant. Taking into account the definitions of $X$ and $Z$ in \eqref{ph.space}, we infer that the profiles satisfy \eqref{beh.P0}, as claimed.
\end{proof}
Studying the behavior near the points $P_1^{\gamma}$ for $\gamma>0$ makes a difference with respect to the phase space used for the general case $1<p<m$ in \cite{IS2}. Indeed, while in our case we have a vertical line of critical points, for $p>1$ this line reduces to one point $P_1=P_1^0$.
\begin{lemma}[Analysis of the points $P_1^{\gamma}=(0,-\beta/\alpha,\gamma)$]\label{lem.2}
For any $\gamma>0$, the orbits entering the point $P_1^{\gamma}$ from the phase space contain profiles having an interface at a finite point:
\begin{equation}\label{beh.P1}
f(\xi)\sim\left(K(\gamma)-\frac{m-1}{2m\sigma}\xi^2\right)^{1/(m-1)}, \quad K(\gamma):=\frac{m-1}{2m\sigma}(\alpha\gamma)^{2/\sigma}>0,
\end{equation}
for $\xi\to\xi_0=(\alpha\gamma)^{1/\sigma}\in(0,\infty)$.
\end{lemma}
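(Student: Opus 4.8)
The plan is to follow the local-analysis strategy of Lemma \ref{lem.1}, the essential new feature being that $\{P_1^\gamma\}_{\gamma>0}$ is an entire line of equilibria, so a zero eigenvalue is forced and must be read as motion along this line rather than as genuine center dynamics. First I would linearize \eqref{PSsyst} at $P_1^\gamma=(0,-\beta/\alpha,\gamma)$. A direct computation gives a lower-triangular Jacobian with eigenvalues $\lambda_1=-(m-1)\beta/\alpha<0$, $\lambda_2=\beta/\alpha>0$ and $\lambda_3=0$; the eigenvector for $\lambda_3$ points in the $Z$-direction, i.e. tangent to the line of critical points, whereas the stable eigenvector for $\lambda_1$ has a nonzero $X$-component. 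Since every point of the line is an equilibrium, this line is a center manifold on which the flow is trivial, and it is normally hyperbolic because its transverse eigenvalues are nonzero. Hence the only nontrivial orbits converging to a fixed $P_1^\gamma$ are those lying on its one-dimensional stable fiber, along which $X\to0^+$ from the physical region $X\ge0$; this is what is meant by ``orbits entering $P_1^\gamma$'', and it already shows such orbits carry genuine profiles with $f>0$ nearby.

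Next I would translate the three coordinate limits back to the profile through \eqref{ph.space}. From $Z\to\gamma$ and $Z=\alpha^{-1}\xi^{\sigma}$ one obtains $\xi\to\xi_0:=(\alpha\gamma)^{1/\sigma}\in(0,\infty)$; from $X=\tfrac{m}{\alpha}\xi^{-2}f^{m-1}\to0$ together with $\xi\to\xi_0>0$ one gets $f^{m-1}\to0$, i.e. $f\to0$, so $\xi_0$ is an interface point. A short consistency check using $d\eta/d\xi=\tfrac{\alpha}{m}\xi f^{1-m}$ confirms that the sink $\eta\to+\infty$ is reached exactly as $\xi\to\xi_0^-$, since once the interface power is known the integrand behaves like $(\xi_0-\xi)^{-1}$ and the integral diverges logarithmically. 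Thus ``entering the critical point'' and ``approaching the finite interface $\xi_0$'' are the same limit.

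The precise interface slope is then read off directly from the value $Y\to-\beta/\alpha$. Writing $v:=f^{m-1}$ and using $Y=\tfrac{m}{\alpha}\xi^{-1}f^{m-2}f'=\tfrac{m}{\alpha(m-1)}\xi^{-1}v'$, the limit $Y\to-\beta/\alpha$ forces $v'=\tfrac{\alpha(m-1)}{m}\xi\,Y\to-\tfrac{(m-1)\beta\xi_0}{m}=-\tfrac{(m-1)\xi_0}{m\sigma}$ as $\xi\to\xi_0^-$, recalling $\beta=1/\sigma$. Since $v(\xi_0)=0$ and $v'(\xi_0^-)=-\tfrac{(m-1)\xi_0}{m\sigma}$, a first-order expansion yields $v(\xi)\sim\tfrac{(m-1)\xi_0}{m\sigma}(\xi_0-\xi)$. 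Finally I would observe that the quadratic $K(\gamma)-\tfrac{m-1}{2m\sigma}\xi^2$ has exactly the same zero and the same slope at $\xi_0$ (indeed $K(\gamma)=\tfrac{m-1}{2m\sigma}\xi_0^2$ and its derivative at $\xi_0$ equals $-\tfrac{(m-1)\xi_0}{m\sigma}$), so $v(\xi)\sim K(\gamma)-\tfrac{m-1}{2m\sigma}\xi^2$ as $\xi\to\xi_0$, which is precisely \eqref{beh.P1}.

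The main obstacle, and the genuine point of departure from the $p>1$ analysis of \cite{IS2} where $P_1$ is an isolated point, is the degeneracy created by the line of equilibria: the zero eigenvalue must be correctly attributed to the trivial tangential flow, and the normal hyperbolicity of the line must be invoked so that convergence to a \emph{specific} $P_1^\gamma$ is governed by its one-dimensional stable fiber rather than by drift along the line toward a neighboring equilibrium. Once this degeneracy is handled, reading $\xi_0$ off from $Z$ and the interface slope off from $Y$ reduces the remainder to an elementary expansion.
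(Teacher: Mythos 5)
Your proposal is correct and follows essentially the same route as the paper: linearize at $P_1^{\gamma}$, attribute the zero eigenvalue to the direction $e_3=(0,0,1)$ tangent to the line of equilibria, conclude that there is a unique orbit entering $P_1^{\gamma}$ from the region $\{X>0\}$ tangent to $e_1$, and recover \eqref{beh.P1} from the limits $Y\to-\beta/\alpha$ and $Z\to\gamma$ via \eqref{ph.space}. The only cosmetic differences are that you justify uniqueness of the entering orbit through normal hyperbolicity of the line and its stable fibration, where the paper argues via uniqueness of the one-dimensional center manifolds (citing Chow--Hale and Perko, following \cite{dPS02}), and that you derive \eqref{beh.P1} by a first-order expansion of $f^{m-1}$ at $\xi_0$ rather than by integrating $(f^{m-1})'(\xi)\sim-\tfrac{(m-1)\beta}{m}\xi$ to the quadratic form, which are asymptotically equivalent.
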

\begin{proof}
The linearization of the system \eqref{PSsyst} near $P_1^{\gamma}$ has the matrix
$$M(P_1^{\gamma})=\left(
  \begin{array}{ccc}
    -\frac{(m-1)\beta}{\alpha} & 0 & 0 \\
   1+\frac{\beta}{\alpha}-\gamma & \frac{\beta}{\alpha} & 0 \\
    \sigma\gamma & 0 & 0 \\
  \end{array}
\right),
$$
with eigenvalues
$$
\lambda_1=-\frac{(m-1)\beta}{\alpha}<0, \ \ \lambda_2=\frac{\beta}{\alpha}>0, \ \ \lambda_3=0
$$
and corresponding eigenvectors
$$
e_1=\left(-1,\frac{\alpha(1-\gamma)+\beta}{m\beta},\frac{\alpha\sigma\gamma}{(m-1)\beta}\right), \ \ e_2=(0,1,0), \ \ e_3=(0,0,1).
$$
According to \cite[Theorem 2.15, Chapter 9]{CH} and the Local Center Manifold Theorem \cite[Theorem 1, Section 2.10]{Pe}, it follows that all the center manifolds (recall that a priori the center manifold may not be unique) of dimension one in the neighborhood of the point $P_1^{\gamma}$ have to contain a segment of the invariant line $\{X=0, Y=-\beta/\alpha\}$. We thus readily deduce that the center manifold near $P_1^{\gamma}$ is unique. Similarly to the analysis in \cite{dPS02}, there exists only one trajectory entering $P_1^{\gamma}$ from outside the invariant plane $\{X=0\}$, tangent to the eigenvector $e_1$. All the other orbits are contained in the plane $\{X=0\}$ and thus do not contain profiles. The behavior of the profiles entering $P_1^{\gamma}$ is attained by equating
$$
Y\to-\frac{\beta}{\alpha}, \quad {\rm as} \ Z\to\gamma\in(0,\infty),
$$
which by integration gives \eqref{beh.P1}.
\end{proof}
\begin{lemma}[Analysis of the point $P_2=((m-1)/2(m+1)\alpha,1/(m+1)\alpha,0)$]\label{lem.3}
The system \eqref{PSsyst} in the neighborhood of the critical point $P_2$ has a two-dimensional stable manifold and a one-dimensional unstable manifold. The stable manifold is included in the invariant plane $\{Z=0\}$. There exists a unique orbit going out of $P_2$, containing profiles that locally satisfy
\begin{equation}\label{beh.P2}
f(0)=0, \quad f(\xi)\sim\left[\frac{m-1}{2m(m+1)}\right]^{1/(m-1)}\xi^{2/(m-1)}-\psi(\sigma)\xi^{(\sigma+2)/(m-1)}, \ \ {\rm as} \ \xi\to0,
\end{equation}
where $\psi(\sigma)$ is a coefficient depending on $\sigma$ such that $\psi(\sigma)\to0$ as $\sigma\to\infty$.
\end{lemma}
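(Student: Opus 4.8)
The plan is to linearize the system \eqref{PSsyst} at $P_2$, read off the hyperbolic structure from the spectrum, and then convert the one-dimensional unstable direction back into the profile behavior \eqref{beh.P2} through the change of variables \eqref{ph.space}.

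First I would compute the Jacobian $M(P_2)$. Since the third equation of \eqref{PSsyst} is $\dot Z=\sigma ZX$, its linearization at $P_2$ has vanishing partial derivatives in $X$ and $Y$ (because the $Z$-coordinate of $P_2$ is zero), so $M(P_2)$ is block triangular. One eigenvalue is therefore $\lambda_3=\sigma X_2=\sigma(m-1)/(2(m+1)\alpha)>0$, with an eigenvector having nonzero $Z$-component, while the other two are the eigenvalues of the $2\times2$ block $A$ coming from the first two equations in the $(X,Y)$ variables. Both diagonal entries of $A$ are negative (the $X$-entry equals $(m-1)Y_2-4X_2=-(m-1)Y_2<0$, and the $Y$-entry is $-2Y_2-\beta/\alpha-X_2<0$), so $\mathrm{tr}\,A<0$; and a short computation gives $\det A=(m-1)^3\sigma^2/[2(m+1)(\sigma+2)^2]>0$. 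Hence both eigenvalues of $A$ have negative real part, and $P_2$ has a two-dimensional stable manifold and a one-dimensional unstable manifold.

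Next I would locate the manifolds. For each stable eigenvalue $\mu$ of $A$ we have $\mu\neq\lambda_3$, so the third row of $(M(P_2)-\mu I)w=0$ forces the $Z$-component of $w$ to vanish; thus the stable eigenspace lies in the invariant plane $\{Z=0\}$, and by invariance the entire stable manifold is contained in $\{Z=0\}$. The unstable manifold is tangent to the $\lambda_3$-eigenvector, whose $Z$-component does not vanish; of its two branches only the one entering $\{Z>0\}$ is admissible (there $\dot Z=\sigma ZX>0$ is consistent with $Z$ increasing), giving a unique orbit going out of $P_2$ into the phase space.

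It remains to translate this orbit into profile asymptotics. Along it $Z\to0$, hence $\xi\to0$, and $X\to X_2$; solving $X=(m/\alpha)\xi^{-2}f^{m-1}\to X_2$ yields the leading term $f\sim[(m-1)/(2m(m+1))]^{1/(m-1)}\xi^{2/(m-1)}$. For the correction I would compute the unstable eigenvector $v_3=(a,b,1)$, determined by $(A-\lambda_3 I)(a,b)^\top=(0,X_2)^\top$, so that near $P_2$ the orbit obeys $X\approx X_2+aZ=X_2+(a/\alpha)\xi^{\sigma}$; inserting this into $X=(m/\alpha)\xi^{-2}f^{m-1}$ and expanding the $(m-1)$-th root produces precisely a correction of order $\xi^{(\sigma+2)/(m-1)}$ and identifies $\psi(\sigma)$ as $-a$ times a fixed positive $\sigma$-independent constant. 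The main obstacle is the final assertion $\psi(\sigma)\to0$ as $\sigma\to\infty$: here I would observe that, by \eqref{SSexp}, $\alpha\to1/(m-1)$ and $\beta/\alpha\to0$, so the entries of $A$ stay bounded while $\lambda_3\to\infty$; consequently $(A-\lambda_3 I)^{-1}\to0$ and $a=[(A-\lambda_3 I)^{-1}(0,X_2)^\top]_1\to0$, which forces $\psi(\sigma)\to0$.
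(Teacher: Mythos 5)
Your proposal is correct and follows essentially the same route as the paper: linearize at $P_2$, use $\mathrm{tr}\,A<0$, $\det A>0$ and $\lambda_3=\sigma X_2>0$ to obtain the two-dimensional stable / one-dimensional unstable splitting with the stable eigenspace (hence the stable manifold) inside the invariant plane $\{Z=0\}$, and read the expansion \eqref{beh.P2} off the unstable eigenvector through the change of variables \eqref{ph.space}. The only difference is cosmetic: the paper computes the outgoing eigenvector $e_3$ explicitly, so that $\psi(\sigma)\to0$ is immediate from the closed formula, whereas you get the same conclusion softly from $(A-\lambda_3 I)^{-1}\to0$ as $\sigma\to\infty$ --- both are fine, and solving your system $(A-\lambda_3 I)(a,b)^{\top}=(0,X_2)^{\top}$ explicitly recovers the paper's value $a=-(m-1)^2/\bigl[(m-1)\sigma^2+(3m+1)\sigma+4(m+1)\bigr]<0$, confirming the sign $\psi(\sigma)>0$ that you left implicit.
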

\begin{proof}
The linearization of the system \eqref{PSsyst} near the critical point $P_2$ has the matrix
$$
M(P_2)=\frac{1}{2(m+1)\alpha}\left(
  \begin{array}{ccc}
    -2(m-1) & (m-1)^2 & 0 \\
    2(m+1)\alpha-2 & -2\beta(m+1)-(m+3) & -(m-1) \\
    0 & 0 & \sigma(m-1) \\
  \end{array}
\right)
$$
with eigenvalues $\lambda_1$, $\lambda_2$ and $\lambda_3$ such that
$$
\lambda_1+\lambda_2=-\frac{3m+1+2\beta(m+1)}{2(m+1)\alpha}<0, \ \ \lambda_1\lambda_2=\frac{m-1}{2(m+1)\alpha^2}>0, \ \ \lambda_3=\frac{\sigma(m-1)}{2(m+1)\alpha},
$$
whence $\lambda_1$, $\lambda_2<0$. We are exactly in the same situation as in \cite[Lemma 2.3]{IS2} and the rest of the proof proceeds exactly as in the mentioned Lemma. In particular, all the orbits entering the point $P_2$ (tangent to the eigenvectors corresponding to the eigenvalues $\lambda_1$ and $\lambda_2$) lie in the invariant plane $\{Z=0\}$ and there is a unique orbit going out of $P_2$ tangent to the eigenvector corresponding to the positive eigenvalue $\lambda_3$, namely
\begin{equation}\label{eigen.P2}
e_3=\left(-\frac{(m-1)^2}{(m-1)\sigma^2+(3m+1)\sigma+4(m+1)},-\frac{(m-1)(\sigma+2)}{(m-1)\sigma^2+(3m+1)\sigma+4(m+1)},1\right).
\end{equation}
The behavior \eqref{beh.P2} is deduced similarly as in \cite[Lemma 2.3]{IS2}. At first, the first order term in \eqref{beh.P2} is obtained simply by noticing that $X(\xi)\to(m-1)/2(m+1)\alpha$ as $\xi\to0$ and using the definition of $X(\xi)$ in \eqref{ph.space}. For the second order term in \eqref{beh.P2}, we define:
$$
\psi(\sigma):=\left(\frac{(m-1)^2}{(m-1)\sigma^2+(3m+1)\sigma+4(m+1)}\right)^{1/(m-1)},
$$
and we further use the fact that the unique connection going out of $P_2$ lies tangent to the eigenvector $e_3$, thus (letting $X(P_2):=(m-1)/2\alpha(m+1)$)
$$
\frac{X-X(P_2)}{Z}\sim-\psi(\sigma)^{m-1}, \quad {\rm as} \ \xi\to0,
$$
or equivalently
$$
\frac{m}{\alpha}\left[f^{m-1}(\xi)-\frac{1}{2m(m+1)}\xi^2\right]\sim-\psi(\sigma)^{m-1}\xi^{\sigma+2}
$$
from where we readily deduce the second term in \eqref{beh.P2}.
\end{proof}
\begin{lemma}[Analysis of the point $P_0^{\gamma}=(0,0,\gamma)$]\label{lem.4}
For any $\gamma>0$, there is a unique orbit entering the point $P_0^{\gamma}$, which is contained in the $Z$ axis and do not contains interesting profiles.
\end{lemma}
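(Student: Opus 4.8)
The plan is to carry out a local analysis in the spirit of Lemma \ref{lem.1}, but now taking seriously that $P_0^{\gamma}$ sits on the critical line $\{X=0,\,Y=0\}$ and that its linearization is strongly degenerate. First I would compute the Jacobian of \eqref{PSsyst} at $P_0^{\gamma}=(0,0,\gamma)$, which is
\begin{equation*}
M(P_0^{\gamma})=\begin{pmatrix} 0 & 0 & 0 \\ 1-\gamma & -\beta/\alpha & 0 \\ \sigma\gamma & 0 & 0 \end{pmatrix},
\end{equation*}
with eigenvalues $\lambda_1=-\beta/\alpha<0$ and a double eigenvalue $\lambda_2=\lambda_3=0$. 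Since $\sigma\gamma>0$, the kernel of $M(P_0^{\gamma})$ is only one–dimensional (spanned by $(0,0,1)$), so the zero eigenvalue is non-semisimple: we face a $2\times2$ Jordan block, and $P_0^{\gamma}$ has a one–dimensional stable manifold tangent to $(0,1,0)$ together with a two–dimensional center manifold. This non-hyperbolicity is exactly what forces a center manifold reduction.

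For the stable manifold I would note that its tangent direction $(0,1,0)$ lies inside the invariant plane $\{X=0\}$; as this plane is invariant for \eqref{PSsyst} and the stable manifold is one–dimensional, the stable manifold is entirely contained in $\{X=0\}$. Restricted to $\{X=0\}$ the system collapses to $\dot{Y}=-Y(Y+\beta/\alpha)$, $\dot{Z}=0$, so this orbit lives at the fixed level $Z=\gamma$ and is precisely the heteroclinic segment running from $P_1^{\gamma}$ (at $Y=-\beta/\alpha$) down to $P_0^{\gamma}$ (at $Y=0$). Because $X\equiv0$ forces $f\equiv0$ through the definition of $X$ in \eqref{ph.space}, this connection carries no genuine profile.

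The crux is to exclude any orbit reaching $P_0^{\gamma}$ from the physical region $X>0$. Here I would introduce the stable coordinate $s:=Y+\frac{\alpha(\gamma-1)}{\beta}X$ (built from the left eigenvector of $\lambda_1$) and the shift $z:=Z-\gamma$, using $(X,z)$ as coordinates on the center manifold, on which $s=O(|(X,z)|^2)$. Substituting $Y=-\frac{\alpha(\gamma-1)}{\beta}X+O(2)$ into the first and third equations of \eqref{PSsyst}, and recalling from \eqref{SSexp} that $\alpha/\beta=(\sigma+2)/(m-1)$, the reduced flow becomes
\begin{equation*}
\dot{X}=-\bigl[(\sigma+2)\gamma-\sigma\bigr]X^2+O(3),\qquad \dot{z}=\sigma\gamma\,X+O(Xz).
\end{equation*}
Dividing, to leading order $\dfrac{dX}{dz}=-\dfrac{(\sigma+2)\gamma-\sigma}{\sigma\gamma}\,X$, hence $X(z)\sim C\exp\!\bigl(-\tfrac{(\sigma+2)\gamma-\sigma}{\sigma\gamma}\,z\bigr)$ along any center–manifold orbit. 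Therefore, as the orbit approaches the level $z=0$ (that is, $Z\to\gamma$), one has $X\to C$, and the only way to simultaneously have $X\to0$ is $C=0$, i.e. $X\equiv0$. In other words, a positive orbit merely crosses the plane $\{Z=\gamma\}$ at a point with $X>0$, away from $P_0^{\gamma}$, and cannot converge to it.

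To conclude I would invoke the reduction principle: since the spectrum has no unstable part, every orbit converging to $P_0^{\gamma}$ projects onto a center–manifold orbit converging to the origin, which the computation above forces to satisfy $X\equiv0$. Hence the only orbit entering $P_0^{\gamma}$ is the one already identified inside $\{X=0\}$ (the heteroclinic from $P_1^{\gamma}$ sitting on the critical $Z$–structure at the fixed level $Z=\gamma$), on which $f\equiv0$, so no interesting profile is attached to $P_0^{\gamma}$. I expect the main obstacle to be precisely the doubly–degenerate, non-diagonalizable zero eigenvalue, which rules out any hyperbolic shortcut and makes the center manifold reduction unavoidable; the decisive—and slightly counterintuitive—point is that the finite constraint $Z\to\gamma$ is incompatible with $X\to0$ when $\gamma>0$, in sharp contrast with the case $\gamma=0$ of Lemma \ref{lem.1}, where the term $\sigma\gamma X$ disappears and genuine profiles do emanate from the origin. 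A minor separate check is the borderline value $\gamma=\sigma/(\sigma+2)$, where the quadratic coefficient $(\sigma+2)\gamma-\sigma$ vanishes and the next order in the reduced $\dot X$ equation is needed; it again yields $X\not\to0$ unless $X\equiv0$, so the conclusion is unchanged.
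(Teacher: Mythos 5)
Your proposal reproduces the paper's own strategy almost step for step: your stable coordinate $s$ is exactly $\tfrac{\alpha}{\beta}G$, where $G=\tfrac{\beta}{\alpha}Y-(1-\gamma)X$ is the deviation variable the paper introduces; you obtain the same two-dimensional center manifold and the same reduced flow $\dot X=-\bigl[(\sigma+2)\gamma-\sigma\bigr]X^2+O(3)$, $\dot z=\sigma\gamma X+O(Xz)$ (the paper's \eqref{flowgamma}); and for $\gamma\neq\sigma/(\sigma+2)$ your exclusion argument --- integrate $dX/dz$ to get $X(z)\sim Ce^{-kz}$, so $X$ stays bounded away from zero over the finite $z$-interval ending at $z=0$ and no orbit with $X>0$ can converge to $P_0^{\gamma}$ --- is precisely the paper's ``trivially integrated up to first order'' step, made slightly more explicit. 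Your identification of the stable manifold inside the invariant plane $\{X=0\}$, the Jordan-block observation, and the contrast with the case $\gamma=0$ of Lemma \ref{lem.1} are all correct (one small imprecision: at level $Z=\gamma$ the one-dimensional stable manifold has two branches, the orbit descending from $Y>0$ as well as the heteroclinic from $P_1^{\gamma}$, though the paper's own phrasing ``contained in the $Z$ axis'' is itself loose on this point).

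The genuine shortfall is the resonant value $\gamma=\sigma/(\sigma+2)$, which you wave off as ``a minor separate check.'' In the paper this is the delicate half of the proof: when the quadratic coefficient vanishes, the decisive terms in $\dot X$ are cubic and come precisely from the \emph{quadratic} part of the center manifold, $G=aX^2-X\overline{Z}+O(3)$ with $a$ computed explicitly in \eqref{interm18}, and the paper then excludes connections by a sector/slope-comparison argument (\eqref{interm19}--\eqref{interm20}): any orbit reaching the origin would need slope $d\overline{Z}/dX\in[0,a]$, whereas the flow forces $d\overline{Z}/dX>a$ near the origin. Your substitution $Y=\tfrac{\alpha(1-\gamma)}{\beta}X+O(2)$ captures only the linear part of the center manifold and therefore cannot produce these cubic terms, so your claim that the next order ``again yields $X\not\to0$ unless $X\equiv0$'' is an assertion rather than a proof. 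It is, however, a repairable one, and your mechanism does extend: since $\dot X=X\bigl[(\sigma+2)G-(\gamma(\sigma+2)-\sigma)X\bigr]$ holds exactly and $G$ vanishes on the line of equilibria $\{X=0,\,Y=0\}$ (which lies on every center manifold), one has $G=O\bigl(X\,|(X,z)|\bigr)$ on the center manifold, hence $d(\log X)/dz$ stays bounded near the origin even in the resonant case and $X$ again cannot decay to zero over a finite $z$-interval --- arguably a cleaner route than the paper's sector argument. But to make that step honest you must either compute the quadratic center-manifold terms as the paper does or explicitly invoke the vanishing of $G$ at $X=0$; as written, the hardest case of the lemma is left unproved.
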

\begin{proof}
The linearization of the system \eqref{PSsyst} near the critical point $P_0^{\gamma}$ for some $\gamma>0$ has the matrix
$$
M(P_0^{\gamma})=\left(
  \begin{array}{ccc}
    0 & 0 & 0 \\
    -\gamma+1 & -\frac{\beta}{\alpha} & 0 \\
    \sigma\gamma & 0 & 0 \\
  \end{array}
\right),
$$
thus presenting a two-dimensional center manifold and a one-dimensional stable manifold. One proceeds identically as in \cite[Lemma 2.4]{IS2} in order to analyze the center manifold by performing successively the changes of variable
$$
Z=\overline{Z}+\gamma, \quad G=\frac{\beta}{\alpha}Y-(1-\gamma)X,
$$
getting after rather tedious calculations (that are omitted here and left to the reader), the following system in the new variables:
\begin{equation}\label{center.syst}
\left\{\begin{array}{ll}\dot{X}=X[(\sigma+2)G-(\gamma(\sigma+2)-\sigma)X],\\
\dot{G}=-\frac{\beta}{\alpha}G-\frac{m\sigma+m+1-m(\sigma+2)\gamma}{m-1}(1-\gamma)X^2-\frac{\alpha}{\beta}G^2
-\frac{m-1+(\sigma+2)(m+1)(1-\gamma)}{m-1}XG-\frac{\beta}{\alpha}X\overline{Z},\\
\dot{\overline{Z}}=\sigma\gamma X+\sigma X\overline{Z}.\end{array}\right.
\end{equation}
We are now in a position to apply the Local Center Manifold Theorem \cite[Theorem 1, Section 2.12]{Pe} to the system \eqref{center.syst} and look for a center manifold of the form
$$
G(X,\overline{Z})=aX^2+bX\overline{Z}+c\overline{Z}^2+O(|(X,\overline{Z})|^3),
$$
with coefficients $a$, $b$ and $c$ to be determined according to the recipe given in the theorem. Keeping only the quadratic terms in the equation of the center manifold as in \cite[Theorem 1, Section 2.12]{Pe}, we finally get the equation of the center manifold
\begin{equation}\label{interm18}
G(X,\overline{Z})=\frac{\alpha}{\beta}\left[\sigma\gamma-(1-\gamma)\frac{m\sigma+m+1-m(\sigma+2)\gamma}{m-1}\right]X^2-X\overline{Z}+O(|(X,\overline{Z})|^3).
\end{equation}
In order to establish the flow on the center manifold, following again \cite[Theorem 1, Section 2.12]{Pe}, we replace the formula we obtained for $G(X,\overline{Z})$ into the equations for $\dot{X}$ and $\dot{\overline{Z}}$ in the system \eqref{center.syst}. We obtain that the flow on the center manifold near the point $P_{\gamma}$ is given by the system
\begin{equation}\label{flowgamma}
\left\{\begin{array}{ll}\dot{X}=X[(\sigma+2)G-(\gamma(\sigma+2)-\sigma)X],\\
\dot{\overline{Z}}=\sigma\gamma X+O(|(X,\overline{Z})|^2),\end{array}\right.
\end{equation}
For any $\gamma\neq\sigma/(\sigma+2)$, the term with $G=O(|(X,\overline{Z})|^2)$ in \eqref{flowgamma} can be discarded and the remaining system can be trivially integrated up to first order to show that there is no connection entering or going out of $P_{\gamma}$. For $\gamma=\sigma/(\sigma+2)$, the reduced system \eqref{flowgamma} becomes
\begin{equation}\label{flowgamma2}
\left\{\begin{array}{ll}\dot{X}=(\sigma+2)X(aX^2-X\overline{Z})+O(|(X,\overline{Z})|^4,\\
\dot{\overline{Z}}=\sigma\gamma X+O(|(X,\overline{Z})|^2),\end{array}\right.
\end{equation}
where $a$ is the coefficient of $X^2$ in the formula \eqref{interm18} of $G$. In the phase plane associated to the system \eqref{flowgamma2}, an orbit may enter or go out of the critical point $(X,\overline{Z})=(0,0)$ only in the region limited by the $X$ axis and the straight line of equation $aX-\overline{Z}=0$, due to the positive sign of the expression
\begin{equation}\label{interm19}
\frac{d\overline{Z}}{dX}=\frac{\sigma^2}{(\sigma+2)^2}\frac{1}{aX^2-X\overline{Z}}.
\end{equation}
But for $X$ and $\overline{Z}$ sufficiently small, we have by \eqref{interm19} that
\begin{equation}\label{interm20}
\frac{d\overline{Z}}{dX}>a
\end{equation}
in the above mentioned region. Assuming by contradiction that an orbit entering or going out of $(X,\overline{Z})=(0,0)$ exists, then the slope $d\overline{Z}/dX\in[0,a]$ at any point along the orbit, which contradicts \eqref{interm20}. It thus follows that there is no orbit connecting to any of the points $P_{\gamma}$.
\end{proof}
\begin{remark}
In the general case $1<p<m$ \cite[Lemma 2.4]{IS2}, a special attractor for a particular value of $\gamma=1/(p-1)$ existed, and this particular point was very important for the analysis. In our case, since $p=1$, this point no longer exists in the finite space, but it appears as a critical point at infinity (on the Poincar\'e hypersphere, see below).
\end{remark}

\medskip

\noindent \textbf{Local analysis of the critical points at the space infinity}. Together with the finite critical points analyzed below, in order to understand the global picture of the phase space associated to the system \eqref{PSsyst}, we have to analyze its critical points at the space infinity. To this end, we pass to the Poincar\'e hypersphere according to the theory in \cite[Section 3.10]{Pe}. We thus introduce new variables $(\overline{X},\overline{Y},\overline{Z},W)$ by
$$
X=\frac{\overline{X}}{W}, \ Y=\frac{\overline{Y}}{W}, \ Z=\frac{\overline{Z}}{W}
$$
and it follows from \cite[Theorem 4, Section 3.10]{Pe} that the critical points at infinity lie on the Poincar\'e hypersphere at points
$(\overline{X},\overline{Y},\overline{Z},0)$ where $\overline{X}^2+\overline{Y}^2+\overline{Z}^2=1$ and the following system is fulfilled:
\begin{equation}\label{Poincare1}
\left\{\begin{array}{ll}\overline{X}Q_2(\overline{X},\overline{Y},\overline{Z})-\overline{Y}P_2(\overline{X},\overline{Y},\overline{Z})=0,\\
\overline{X}R_2(\overline{X},\overline{Y},\overline{Z})-\overline{Z}P_2(\overline{X},\overline{Y},\overline{Z})=0,\\
\overline{Y}R_2(\overline{X},\overline{Y},\overline{Z})-\overline{Z}Q_2(\overline{X},\overline{Y},\overline{Z})=0,\end{array}\right.
\end{equation}
where $P_2$, $Q_2$ and $R_2$ are the homogeneous second degree parts of the terms in the right hand side of the system \eqref{PSsyst}, that is
\begin{equation*}
\begin{split}
&P_2(\overline{X},\overline{Y},\overline{Z})=\overline{X}[(m-1)\overline{Y}-2\overline{X}],\\
&Q_2(\overline{X},\overline{Y},\overline{Z})=-\overline{Y}^2-\overline{X}\overline{Y}-\overline{X}\overline{Z},\\
&R_2(\overline{X},\overline{Y},\overline{Z})=\sigma\overline{X}\overline{Z}
\end{split}
\end{equation*}
We thus find that the system \eqref{Poincare1} becomes
\begin{equation}\label{Poincare2}
\left\{\begin{array}{ll}\overline{X}[-m\overline{Y}^2+\overline{X}\overline{Y}-\overline{X}\overline{Z}]=0,\\
\overline{X}\overline{Z}[(\sigma+2)\overline{X}-(m-1)\overline{Y}]=0,\\
\overline{Z}(\overline{Y}^2+(\sigma+1)\overline{X}\overline{Y}+\overline{X}\overline{Z})=0,\end{array}\right.
\end{equation}
Taking into account that we are considering only points with coordinates $\overline{X}\geq0$ and $\overline{Z}\geq0$, we find the following critical points at infinity (on the Poincar\'e hypersphere):
$$
Q_1=(1,0,0,0), \ \ Q_{2,3}=(0,\pm1,0,0), \ \ Q_4=(0,0,1,0), \ \
Q_5=\left(\frac{m}{\sqrt{1+m^2}},\frac{1}{\sqrt{1+m^2}},0,0\right).
$$
We perform below the analysis of all these points, which goes in line with the similar section in \cite{IS2}, thus some technical points will be skipped.
\begin{lemma}\label{lem.5}
The critical point $Q_1=(1,0,0,0)$ in the Poincar\'e sphere is an unstable node. The orbits going out of this point into the finite part of the phase space associated to the system \eqref{PSsyst} contain profiles $f$ solutions to \eqref{SSODE} such that $f(0)=D>0$ and any possible value of $f'(0)$.
\end{lemma}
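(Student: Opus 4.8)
The point $Q_1$ corresponds to $X\to\infty$, so the plan is to pass to the projective chart adapted to the $\overline{X}$-direction on the Poincar\'e hypersphere, following \cite[Section 3.10]{Pe}. Concretely, I would introduce the variables
$$
u=\frac{Y}{X}, \quad v=\frac{Z}{X}, \quad w=\frac{1}{X},
$$
together with the time rescaling $\frac{d}{ds}=\frac{1}{X}\frac{d}{d\eta}$, which is an orientation-preserving reparametrization since $X>0$ and hence preserves the stability type. A direct computation transforms \eqref{PSsyst} into
\begin{equation*}
\left\{\begin{array}{ll} u'=-mu^2+u-v+w\left(1-\frac{\beta}{\alpha}u\right),\\[2pt] v'=v[(\sigma+2)-(m-1)u],\\[2pt] w'=w[2-(m-1)u],\end{array}\right.
\end{equation*}
and $Q_1$ becomes the origin $(u,v,w)=(0,0,0)$.

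Linearizing at the origin, I expect the upper-triangular Jacobian
$$
\begin{pmatrix} 1 & -1 & 1 \\ 0 & \sigma+2 & 0 \\ 0 & 0 & 2 \end{pmatrix},
$$
whose eigenvalues are $1$, $2$ and $\sigma+2$, all strictly positive for $\sigma>0$. Hence $Q_1$ is a hyperbolic source, that is an \emph{unstable node}, with no center-manifold complications (in contrast to $P_0$ or $P_0^{\gamma}$); every orbit near $Q_1$ leaves it as $s$ increases. This settles the first assertion.

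To read off the profiles, I would undo the changes of variables. From $\frac{d\eta}{d\xi}=\frac{\alpha}{m}\xi f^{1-m}$ and $\frac{d}{ds}=\frac{1}{X}\frac{d}{d\eta}$ one gets $\frac{ds}{d\xi}=\frac{1}{\xi}$, so $s=\log\xi+\text{const}$, and the emergence from $Q_1$ as $s\to-\infty$ corresponds exactly to $\xi\to 0^+$. In the original variables $X=\frac{m}{\alpha}\xi^{-2}f^{m-1}\to\infty$, $v=Z/X\to0$ and $u=\xi f'/f\to0$; since $Z=\frac{1}{\alpha}\xi^{\sigma}\to0$ automatically, the blow-up of $X$ as $\xi\to0$ is consistent only with $f(0)=D>0$. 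Thus all orbits leaving $Q_1$ carry profiles with a strictly positive value at the origin.

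Finally, to capture \emph{any} value of $f'(0)$, I would track the slow eigendirection. Because $e^{s}\propto\xi$, a generic orbit leaves $Q_1$ tangent to the eigenvector $(1,0,0)$ of the smallest eigenvalue $\lambda=1$, so that $u\sim a_1\xi$ while $w\sim\xi^2$ and $v\sim\xi^{\sigma+2}$; recalling $u=\xi f'/f\sim (D'/D)\xi$, the free coefficient $a_1$ is proportional to $f'(0)=D'$ and may take any real value, the degenerate case $a_1=0$ (orbit tangent to $(1,0,1)$) corresponding precisely to $f'(0)=0$. This matches the elementary ODE count: for $f(0)=D>0$ equation \eqref{SSODE} is regular at $\xi=0$ and admits a unique local solution for each prescribed pair $(f(0),f'(0))$ with $D>0$ and $D'\in\real$ arbitrary. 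The main care lies in the bookkeeping of the time rescaling and, as in \cite{IS2}, in checking that the physical region $\{w>0,\ v\ge0\}$ (that is, $X$ finite and $Z\ge0$) is respected along these orbits; since $\{v=0\}$ and $\{w=0\}$ are invariant, this is immediate once orbits are started with $v>0$ and $w>0$.
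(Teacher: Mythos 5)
Your proposal is correct and takes essentially the same route as the paper: your $(u,v,w)$ chart with the $1/X$ time rescaling reproduces exactly the paper's system \eqref{systinf1} (with the minus sign chosen there), the eigenvalues $1$, $2$, $\sigma+2$ and the unstable-node conclusion, and your eigendirection expansions $u\sim a_1\xi$, $w\sim C\xi^{2}$, $v\sim C'\xi^{\sigma+2}$ encode the same local asymptotics that the paper extracts by integrating $dz/dw$ and $dy/dw$ (the behaviors \eqref{beh.Q11}--\eqref{beh.Q12}, with $a_1\neq0$ giving $f'(0)\neq0$ and the $\lambda=2$-tangent case giving $f'(0)=0$). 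One small caution: the blow-up of $X$ as $\xi\to0$ alone does \emph{not} force $f(0)>0$ (at $Q_5$ one also has $X\to\infty$ yet $f(\xi)\sim K\xi^{1/m}\to0$); what makes your claim correct at $Q_1$ is the additional information $u=\xi f'/f\to0$ at the exponential rate in $s=\log\xi$ guaranteed by hyperbolicity, so that $f'/f=u/\xi$ is integrable at $\xi=0$ and $\log f$ has a finite limit -- a one-line point worth making explicit, since your later eigendirection analysis supplies it implicitly.
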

\begin{proof}
We apply part (a) of \cite[Theorem 5, Section 3.10]{Pe} to infer that the flow in a neighborhood of $Q_1$ is topologically equivalent to the flow in a neighborhood of the origin for the system
\begin{equation}\label{systinf1}
\left\{\begin{array}{ll}-\dot{y}=-y+z-w+my^2+\frac{\beta}{\alpha}yw,\\
-\dot{z}=-(\sigma+2)z+(m-1)yz,\\
-\dot{w}=-2w+(m-1)yw,\end{array}\right.
\end{equation}
where the minus sign has to be chosen according to the direction of the flow in the system \eqref{PSsyst}. It readily follows that $Q_1$ is an unstable node, the linearization of the system \eqref{systinf1} near the origin having positive eigenvalues 1, 2 and $\sigma+2$. In order to study the behavior of the profiles contained in the orbits going out of $Q_1$, we notice that
\begin{equation}\label{interm1}
\frac{dz}{dw}\sim\frac{\sigma+2}{2}\frac{z}{w}, \quad \frac{dy}{dw}\sim\frac{1}{2}\frac{y}{w}-\frac{1}{2}\frac{dz}{dw}+\frac{1}{2},
\end{equation}
whence by integration, we find $z\sim Cw^{(\sigma+2)/2}$. Coming back to the original variables and recalling that the projection of the Poincar\'e hypersphere has been done by dividing by the $X$ variable, we infer that $Z/X\sim KX^{-(\sigma+2)/2}$, and by direct integration,
$$
f(\xi)\sim K>0, \quad {\rm as} \ \xi\to0.
$$
Moreover, by integrating the second equivalence in \eqref{interm1}, we furthermore find
$$
Y\sim KX^{1/2}+1, \quad K\in\real,
$$
and by another integration step we obtain that either
\begin{equation}\label{beh.Q11}
f(\xi)\sim(C+K\xi)^{2/(m-1)}, \quad {\rm as} \ \xi\to0, \quad K\neq0, \ C\neq0
\end{equation}
or
\begin{equation}\label{beh.Q12}
f(\xi)\sim\left(C+\frac{\alpha(m-1)}{2m}\xi^2\right)^{1/(m-1)}.
\end{equation}
The two behaviors in \eqref{beh.Q11} and \eqref{beh.Q12} both have $f(0)>0$, but differ by their slope: indeed $f'(0)\neq0$ in \eqref{beh.Q11}, respectively $f'(0)=0$ in \eqref{beh.Q12}.
\end{proof}
\begin{lemma}\label{lem.6}
The critical points at infinity represented on the Poincar\'e hypersphere as $Q_{2,3}=(0,\pm1,0,0)$ are an unstable node, respectively a stable node. The orbits going out of $Q_2$ into the finite part of the phase space contain profiles $f(\xi)$ for which there exists $\xi_0\in(0,\infty)$ with $f(\xi_0)=0$, $f'(\xi_0)>0$. On the opposite, the profiles entering $Q_3$ from the finite part of the phase space contain profiles $f(\xi)$ for which there exists $\xi_0\in(0,\infty)$ with $f(\xi_0)=0$, $f'(\xi_0)<0$.
\end{lemma}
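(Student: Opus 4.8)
The plan is to mimic the treatment of $Q_1$ in Lemma~\ref{lem.5} and of the corresponding point in \cite{IS2}: pass to the chart of the Poincar\'e hypersphere adapted to the $\overline{Y}$-direction, classify the linearized flow there, and then recover the profile behaviour by undoing \eqref{ph.space}. Concretely, I would apply part (b) of \cite[Theorem 5, Section 3.10]{Pe} with the local coordinates
\[
x=\frac{X}{Y},\qquad z=\frac{Z}{Y},\qquad w=\frac{1}{Y},
\]
so that $X=x/w$, $Y=1/w$, $Z=z/w$, and rescale the independent variable by $d\tau=w^{-1}\,d\eta$ (exactly as $d\tau=X\,d\eta$ was used to obtain \eqref{systinf1} for $Q_1$). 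A direct but routine computation turns \eqref{PSsyst} into a system with polynomial right-hand side near the origin whose linearization at $(x,z,w)=(0,0,0)$ is diagonal with eigenvalues $m$, $1$ and $1$. All three being positive, the origin of the rescaled system is an unstable node.

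The distinction between the two points is then purely a matter of orientation. For $Q_2$ one has $w=1/Y>0$, so the rescaling $d\tau=w^{-1}\,d\eta$ preserves the direction of the flow and $Q_2$ is an unstable node, with a full family of orbits leaving it into the finite phase space. For $Q_3$ one has $w=1/Y<0$, the rescaling reverses orientation, and the same node becomes a stable node, absorbing orbits coming from the finite region. I expect this orientation bookkeeping, rather than the eigenvalue computation, to be the only point requiring care at this stage.

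It remains to translate the local dynamics into statements about $f$. Along an orbit leaving $Q_2$ into $\{X>0,\,Z>0\}$ we have $x,z,w\to0^{+}$, hence $X\to0$ and $Y\to+\infty$; since $Z$ is monotone along orbits and $z=Z/Y\to0$, the coordinate $Z$ tends to a finite limit $Z_0\ge0$, and because $\{Z=0\}$ is invariant an orbit genuinely entering $\{Z>0\}$ must have $Z_0>0$. By $Z=\xi^{\sigma}/\alpha$ this forces $\xi\to\xi_0=(\alpha Z_0)^{1/\sigma}\in(0,\infty)$, and then $X=\frac{m}{\alpha}\xi^{-2}f^{m-1}\to0$ at the finite point $\xi_0$ yields $f(\xi_0)=0$, i.e. an interface. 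To read off the slope I would use the scale-invariant ratio
\[
\frac{Y}{X}=\frac{\xi\,f'(\xi)}{f(\xi)},
\]
which tends to $+\infty$ as the orbit approaches $Q_2$; together with $f\to0^{+}$ and $\xi\to\xi_0>0$ this gives $f'>0$ near $\xi_0$, so the profile leaves the interface increasing and $f'(\xi_0)>0$. The argument for $Q_3$ is identical up to signs: there $Y/X\to-\infty$, whence $f'<0$ near $\xi_0$ and the profile decreases down to its interface, $f'(\xi_0)<0$.

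Finally, I would confirm that the interface really sits at an endpoint of the orbit. Since $d\eta/d\xi=\frac{\alpha}{m}\xi f^{1-m}\to+\infty$ as $f\to0^{+}$, a finite $\xi$-interval terminating at $\xi_0$ is stretched into an infinite $\eta$-interval, so $\xi_0$ corresponds to $\eta\to\pm\infty$; monotonicity of $\eta(\xi)$ then matches the flow direction (outgoing at $Q_2$, incoming at $Q_3$) with $\xi$ receding from, respectively approaching, $\xi_0$. The main obstacle I anticipate is not any single computation but the consistent bookkeeping of the three orientations involved — the rescaled chart time $\tau$, the variable $\eta$, and the spatial variable $\xi$ — which is precisely what attributes the increasing interface to $Q_2$ and the decreasing one to $Q_3$.
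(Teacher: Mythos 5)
Your first half — the chart $x=X/Y$, $z=Z/Y$, $w=1/Y$, the rescaling of time, the linearization with eigenvalues $m,1,1$, and the orientation bookkeeping that makes $Q_2$ an unstable node and $Q_3$ a stable node — is exactly the paper's route to \eqref{systinf2}; the paper settles the sign ambiguity slightly differently (observing that $\dot{Y}=-Y^2-\cdots<0$ near both points since $|Y/X|,|Y/Z|\to\infty$), but your version is equivalent. The genuine gap is in the translation to profiles, where you replace the paper's quantitative step with a soft limiting argument that does not hold up. Your key inference, ``since $\{Z=0\}$ is invariant, an orbit genuinely entering $\{Z>0\}$ must have $Z_0>0$,'' is a non sequitur: invariance of the plane only forbids orbits from \emph{crossing} it, not from having their $\alpha$-limit on it while living in $\{Z>0\}$. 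This very paper supplies counterexamples: the unique orbit going out of $P_2$ (a point with $Z=0$, Lemma \ref{lem.3}) immediately enters $\{Z>0\}$, as do the orbits out of $P_0$ with $Z\sim kX\to0$ (Lemma \ref{lem.1}). Since $Z_0>0$ is precisely what places the interface at a point $\xi_0=(\alpha Z_0)^{1/\sigma}\in(0,\infty)$ rather than at $\xi_0=0$, your argument is missing its crux. The paper closes this by quantitative local integration near $Q_{2,3}$: from $dx/dw\sim m\,x/w$ one gets $x\sim Cw^m$, hence $X/Y\sim C/Y^m$, which integrates to the explicit local form \eqref{beh.Q23}, $f(\xi)\sim[K+C\xi^{2m/(m-1)}]^{1/m}$; the existence of $\xi_0\in(0,\infty)$ and the sign of the slope are then read off from the signs of $K$ and $C$ ($C>0$, $K<0$ for $Q_2$; $C<0$, $K>0$ for $Q_3$). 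Note also that this shows only \emph{part} of the orbits out of $Q_2$ (those with $K<0$) carry such an interface, which is all the lemma asserts, whereas you claim it for every orbit leaving $Q_2$ into the finite region — a stronger statement your argument does not justify.

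Your final ``confirmation'' paragraph contains a second, self-contained error: $d\eta/d\xi=\frac{\alpha}{m}\xi f^{1-m}\to+\infty$ does \emph{not} imply that a finite $\xi$-interval is stretched to an infinite $\eta$-interval. At a $Q_{2,3}$-type interface one has $f(\xi)\sim c(\xi-\xi_0)^{1/m}$, so $f^{1-m}\sim c^{1-m}(\xi-\xi_0)^{-(m-1)/m}$ with $(m-1)/m<1$, which is integrable at $\xi_0$: the variable $\eta$ remains \emph{finite} there. This is consistent with the dynamics, since $\dot{Y}\simeq-Y^2$ makes the escape to $Y=\pm\infty$ occur in finite $\eta$-time; the approach to $Q_{2,3}$ takes infinite time only in the rescaled chart variable $\tau$, which you have conflated with $\eta$. (Contrast with the interfaces at the finite saddle-node points $P_1^{\gamma}$, where $f\sim(K-c\xi^2)^{1/(m-1)}$ gives $f^{1-m}\sim(\xi_0-\xi)^{-1}$ and $\eta$ genuinely diverges.) This slip does not affect the main conclusion, but as written the check fails, and the matching of flow direction with increasing/decreasing interface should instead be done as in the paper, through the sign of $Y$ (i.e., of $f'$) along the orbit near $Q_2$ versus $Q_3$.
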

\begin{proof}
We apply part (b) of \cite[Theorem 5, Section 3.10]{Pe} to find that the flow of the system in a neighborhood of the points $Q_{2,3}$ is topologically equivalent to the flow defined by the system
\begin{equation}\label{systinf2}
\left\{\begin{array}{ll}\pm\dot{x}=-mx-\frac{\beta}{\alpha}xw+x^2+x^2w-x^2z,\\
\pm\dot{z}=-z-(\sigma+1)xz-\frac{\beta}{\alpha}zw+xzw-xz^2,\\
\pm\dot{w}=-w-\frac{\beta}{\alpha}w^2-xw+xw^2-xzw\end{array}\right.
\end{equation}
in a neighborhood of the origin. Moreover, since approaching the points $Q_{2,3}$ we have $|Y/X|\to\infty$ and $|Y/Z|\to\infty$, we deduce from the second equation of the system \eqref{PSsyst} that
$$
\dot{Y}=-Y^2-\frac{\beta}{\alpha}Y+X-XY-XZ<0,
$$
that is, $Y$ is decreasing in a neighborhood of the points $Q_{2,3}$. This shows that in the system \eqref{systinf2} we have to choose the minus sign for the point $Q_2=(0,1,0,0)$ and the plus sign for the point $Q_3=(0,-1,0,0)$. It readily follows then that $Q_2$ is an unstable node and $Q_3$ is a stable node. Moreover, we notice that the behavior near these points, in the variables of the system \eqref{systinf2}, is given by
$$
\frac{dx}{dw}\sim m\frac{x}{w},
$$
whence $x\sim Cw^m$ for some $C>0$. Coming back to the original variables and taking into account that the projection on the Poincar\'e hypersphere has been done by dividing by the $Y$ variable, we find
$$
\frac{X}{Y}\sim C\frac{1}{Y^m},
$$
whence by direct integration we find
\begin{equation}\label{beh.Q23}
f(\xi)\sim[K+C\xi^{2m/(m-1)}]^{1/m}, \quad K, \ C\in\real, \ K\neq0.
\end{equation}
We remark that for the orbits entering $Q_3$, $Y(\xi)<0$ in a neighborhood of it, which means that the profiles contained in such orbits have $f'<0$ and $C<0$, which implies $K>0$ in the formula \eqref{beh.Q23}. Thus there exists $\xi_0\in(0,\infty)$ such that $f(\xi_0)=0$, $f'(\xi_0)<0$. On the other hand, the profiles going out of $Q_2$ have $Y>0$ (although decreasing), hence $C>0$ in \eqref{beh.Q23}. Thus, for part of these profiles, more precisely those with $K<0$ in \eqref{beh.Q23}, there exists $\xi_0\in(0,\infty)$ such that $f(\xi_0)=0$, $f'(\xi_0)>0$, as stated.
\end{proof}
Before analyzing the behavior of the system near the critical point $Q_4$, we first perform the local analysis near the last point in the previous list $Q_5$.
\begin{lemma}\label{lem.8}
The critical point at infinity represented as $Q_5=(m/\sqrt{1+m^2},1/\sqrt{1+m^2},0,0)$ in the Poincar\'e hypersphere has a two-dimensional unstable manifold and a one-dimensional stable manifold. The orbits going out from this point into the finite region of the phase space contain profiles satisfying $$
f(0)=0, \ \quad f(\xi)\sim K\xi^{1/m} \ {\rm as} \  \xi\to0, \ K>0,
$$
in a right-neighborhood of $\xi=0$.
\end{lemma}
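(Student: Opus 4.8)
The plan is to analyze $Q_5$ inside the very same Poincaré chart already set up for $Q_1$ in Lemma \ref{lem.5}, namely the one obtained by dividing by the $X$-variable, $y=Y/X$, $z=Z/X$, $w=1/X$. The key observation is that $Q_5$ is \emph{not} the origin of this chart (which is $Q_1$) but the interior critical point $(y,z,w)=(1/m,0,0)$, since $\overline{Y}/\overline{X}=1/m$ and $\overline{Z}=0$ at $Q_5$. Hence there is no need to introduce a new central projection centered off the coordinate axes: one simply linearizes the polynomial system \eqref{systinf1} at $(1/m,0,0)$.

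Differentiating the right-hand side of \eqref{systinf1} at $(1/m,0,0)$ produces an upper-triangular linearization, so that the eigenvalues can be read directly off the diagonal:
$$
\mu_1=-1,\qquad \mu_2=\sigma+1+\frac{1}{m},\qquad \mu_3=\frac{m+1}{m}.
$$
As $\sigma>0$ and $m>1$, exactly two of them are positive and one is negative, giving at once the asserted two-dimensional unstable manifold and one-dimensional stable manifold. The stable direction ($\mu_1$) is the $y$-axis $\{z=w=0\}$, which lies entirely at infinity and carries no finite-region profiles; by contrast, the eigenvector associated with $\mu_3$ has a nonzero $w$-component, and since $w=1/X$ grows away from $0$ precisely when an orbit detaches from the Poincaré hypersphere into the region $X<\infty$, this guarantees that genuine orbits leave $Q_5$ towards the finite part of the phase space.

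To recover the profile behavior I will undo the change of variables. The decisive identity is $Y/X=\xi f'(\xi)/f(\xi)$, so that along any orbit approaching $Q_5$ one has $\xi f'(\xi)/f(\xi)\to 1/m$; integrating this relation yields $f(\xi)\sim K\xi^{1/m}$, and in particular $f(0)=0$. Feeding this back into $X=(m/\alpha)\xi^{-2}f^{m-1}$ gives $X\sim C\xi^{-(m+1)/m}\to\infty$ as $\xi\to 0^{+}$, which confirms that reaching $Q_5$ (i.e.\ $X\to\infty$, $w\to 0$) corresponds exactly to the limit $\xi\to 0$; thus the outgoing orbits describe the profile in a right-neighborhood of the origin, and non-negativity of $f$ forces $K>0$.

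Once the correct chart is identified, the computation is entirely routine. The only point I expect to require real care is the bookkeeping of the flow direction: system \eqref{systinf1} already carries the sign conventions fixed for the $\overline{X}>0$ hemisphere, and one must check that, with those conventions, the two positive eigenvalues indeed correspond to orbits \emph{leaving} $Q_5$ as $\xi$ increases from $0$, so that it is the unstable manifold --- and not the stable one --- that supplies the profiles near $\xi=0$.
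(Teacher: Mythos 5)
Your proposal is correct and takes essentially the same route as the paper's proof: both linearize the chart system \eqref{systinf1} at the interior critical point $(y,z,w)=(1/m,0,0)$ (you handle the flow-direction sign correctly, since your eigenvalues $-1$, $\sigma+1+\frac{1}{m}$, $\frac{m+1}{m}$ are exactly the negatives of the diagonal of the paper's matrix $M(Q_5)$, consistent with the minus sign the paper chooses via $\dot X<0$ near $Q_5$), and both obtain the local behavior by integrating $Y/X\to 1/m$, i.e.\ $X\sim mY$, to get $f(\xi)\sim K\xi^{1/m}$. Your explicit verification that $X\sim C\xi^{-(m+1)/m}\to\infty$ as $\xi\to0^{+}$, identifying the approach to $Q_5$ with the limit $\xi\to0$, is a small added precision over the paper's implicit treatment, but the argument is the same.
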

\begin{proof}
Using again the recipe given in \cite[Section 3.10]{Pe}, we infer that the flow in a neighborhood of the point $Q_5$ is topologically equivalent to the flow of the same system \eqref{systinf1} but this time in a neighborhood of the critical point with coordinates $(y,z,w)=(1/m,0,0)$. Moreover, when approaching $Q_5$, we have
$$
\frac{X}{Y}=\frac{\overline{X}}{\overline{Y}}\sim m,
$$
hence $X\sim mY$ in a (finite) neighborhood of $Q_5$ and
$$
\dot{X}=X[(m-1)Y-2X]\sim-m(m+1)Y^2<0,
$$
thus we have to choose again the minus sign in the system \eqref{systinf1}. The linearization of \eqref{systinf1} near $Q_5$ has the matrix
$$
M(Q_5)=\left(
         \begin{array}{ccc}
           1 & 1 & \frac{\beta}{m\alpha}-1 \\
           0 & -\frac{m\sigma+m+1}{m} & 0 \\
           0 & 0 & -\frac{m+1}{m} \\
         \end{array}
       \right),
$$
whence, taking into account the choice of the minus sign in front of the system \eqref{systinf1}, we obtain a two-dimensional unstable manifold and a
one-dimensional stable manifold. An easy analysis of the eigenvectors of the matrix $M(Q_5)$ shows that the orbits going out from $Q_5$ on the unstable manifold go to the finite part of the phase-space, while the orbits entering $Q_5$ on the stable manifold are contained in the hypersphere and thus not interesting for us. Passing now to profiles, we simply start from the relation $X\sim mY$ in a neighborhood of $Q_5$, whence
$$
\frac{m}{\alpha}f^{m-1}(\xi)\xi^{-2} \sim \frac{m^2}{\alpha}f^{m-2}(\xi)f'(\xi)\xi^{-1},
$$
and after direct integration we obtain
$$
f(\xi)\sim K\xi^{1/m} \ \ {\rm as} \ \xi\to0, \qquad {\rm for} \
K>0,
$$
as desired.
\end{proof}
As the reader probably noticed, we left apart up to now the local analysis of the critical point $Q_4$. The reason for it is that it is more involved, since the method in part (c) of \cite[Theorem 5, Section 3.10]{Pe} only leads to a purely quadratic system, with all eigenvalues zero. In order to simplify the study, we need first a preparatory result.
\begin{lemma}\label{lem.Q41}
Let $f(\xi)$ be a solution to \eqref{SSODE} such that $f(\xi)>0$ for any $\xi>0$. Then, there exists $R>0$ and $K=K(R)>0$ such that
\begin{equation}\label{beh.Q41}
f(\xi)\leq K\xi^{(\sigma+2)/(m-1)}e^{-\xi^{\sigma}}, \quad {\rm for \ any} \ \xi>R.
\end{equation}
\end{lemma}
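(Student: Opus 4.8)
The guiding idea is that the claimed tail is exactly the ``slow'' behaviour selected by the first-order convection-reaction part of \eqref{SSODE}, the nonlinear diffusion $(f^m)''$ being only a lower-order correction because $m>1$. Set $a:=(\sigma+2)/(m-1)=\sigma\alpha$ and let $\bar f(\xi):=\xi^{a}e^{-\xi^{\sigma}}$, so the asserted bound reads $f\le K\bar f$ on $[R,\infty)$. The plan is to use the integrating factor $\mu:=1/\bar f=\xi^{-a}e^{\xi^{\sigma}}$, which is precisely the one annihilating the first-order operator: since $\mu'/\mu=\sigma\xi^{\sigma-1}-a/\xi=(\xi^{\sigma}-\alpha)/(\beta\xi)$ (the two exponents matching because $a=\sigma\alpha$), one has
\[
\mu\bigl(\beta\xi f'+(\xi^{\sigma}-\alpha)f\bigr)=\beta\xi\,(\mu f)'.
\]
Feeding this into \eqref{SSODE} and writing $P:=\mu f=f/\bar f$ yields the exact identity
\[
P'(\xi)=-\frac{\mu(\xi)}{\beta\xi}\,(f^{m})''(\xi).
\]
Thus $P$ is non-increasing on any interval where $f^{m}$ is convex, and the lemma follows at once, with $K=P(R)$, once I establish that $(f^{m})''\ge0$ for all large $\xi$.

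First I would record the qualitative behaviour of a positive solution. Writing the equation as $(f^{m})''=(\alpha-\xi^{\sigma})f-\beta\xi f'$, a positive solution can have no local minimum once $\xi^{\sigma}>\alpha$: at such a point $f'=0$, $(f^{m})'=0$ and $(f^{m})''\ge0$, whereas the right-hand side there equals $(\alpha-\xi^{\sigma})f<0$. Hence $f$ is eventually monotone; an elementary integration of \eqref{SSODE} over $[\Xi,T]$ then rules out a positive limit and eventual growth (both would make the reaction integral $\int(\xi^{\sigma}-\alpha)f$ grow like $T^{\sigma+1}$, forcing $(f^m)'(T)\to-\infty$ and hence $\int|f'|=\infty$). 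Therefore $f\downarrow0$ with $f'<0$ for $\xi$ large.

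The heart of the proof, and its main difficulty, is the sign of $(f^{m})''$. The obstruction is structural: because the linear part of \eqref{SSODE} is exactly resonant with $\bar f$ (the computation above shows it annihilates $\bar f$), the precise decay rate, and with it the sign of $(f^{m})''$, is decided by the subdominant diffusion term alone, and is therefore tautologically equivalent to the bound we are after. The leverage is $m>1$. Differentiating \eqref{SSODE} and eliminating $f''$ through $(f^{m})''=mf^{m-1}f''+m(m-1)f^{m-2}(f')^{2}$, the quantity $q:=(f^{m})''$ satisfies a linear first-order equation
\[
q'+\frac{\beta\xi}{mf^{m-1}}\,q=S(\xi),\qquad S:=-\sigma\xi^{\sigma-1}f+\bigl((\alpha-\xi^{\sigma})-\beta\bigr)f'+\frac{\beta\xi(m-1)(f')^{2}}{f},
\]
whose damping coefficient $\beta\xi/(mf^{m-1})$ tends to $+\infty$ (here is where $m>1$ together with $f\to0$ enter). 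Consequently $q$ is slaved to its quasi-stationary value $mf^{m-1}S/(\beta\xi)$, which, using $f'<0$ and the leading balance $f'\sim-\sigma\xi^{\sigma-1}f$, is positive and of size $m^{2}\sigma^{2}\xi^{2\sigma-2}f^{m}$ (consistent with $(\bar f^{m})''$). I would convert this into a rigorous invariant-region argument in the $(f,f')$–plane: one exhibits a trapping region around the slow curve $f'/f=-\rho_{0}$, where $\rho_{0}=\sigma\xi^{\sigma-1}-a/\xi$, from which an orbit cannot escape upward, thereby forcing $q\ge0$ for all large $\xi$ in a self-consistent way.

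I expect essentially all the work to lie in this last step, precisely because the sign of $(f^{m})''$ cannot be read off the linear part and must be extracted from the genuinely nonlinear structure; the strong damping produced by $m>1$ is what makes the slow curve attracting and closes the loop. Once $(f^{m})''\ge0$ on some $[R,\infty)$ is in hand, the monotonicity of $P$ delivered by the integrating-factor identity gives $f(\xi)\le P(R)\,\bar f(\xi)=K\,\xi^{(\sigma+2)/(m-1)}e^{-\xi^{\sigma}}$ for $\xi>R$, which is exactly \eqref{beh.Q41}.
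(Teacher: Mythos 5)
Your proof shares its overall architecture with the paper's: (a) no interior local minima once $\xi^\sigma>\alpha$, hence eventual monotonicity (this is the paper's Step A verbatim); (b) exclusion of a finite nonzero limit and of growth to infinity, hence $f\downarrow 0$ — your direct integration of \eqref{SSODE} over $[\Xi,T]$ is a legitimate and arguably more self-contained substitute for the paper's Steps B--C, which invoke \cite[Lemma 2.9]{IL13a} and a mean-value argument; and (d) your integrating-factor identity $P'=-\mu\,(f^m)''/(\beta\xi)$ for $P=f\xi^{-a}e^{\xi^\sigma}$, $a=\alpha/\beta=(\sigma+2)/(m-1)$, is algebraically correct and is exactly the paper's integration of the inequality $(\xi^\sigma-\alpha)f+\beta\xi f'\le 0$ in \eqref{interm11}. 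The genuine gap is step (c): the eventual convexity $(f^m)''\ge 0$ on some $[R_1,\infty)$ is never proved. You say you \emph{would} convert the quasi-stationary heuristic into "a rigorous invariant-region argument" around the slow curve $f'/f=-\rho_0$, but no trapping region is exhibited, the sign of the vector field on its boundary is never computed, and the heuristic itself (slaving $q=(f^m)''$ to $mf^{m-1}S/(\beta\xi)$ via the balance $f'\sim-\sigma\xi^{\sigma-1}f$) presupposes the very decay law the lemma asserts — a circularity you yourself flag. Since you also, correctly, locate "essentially all the work" in that step, what you have written is a program rather than a proof: \eqref{beh.Q41} is not established.

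Moreover, your diagnosis that the sign of $(f^m)''$ "cannot be read off" without self-consistent rate information is what leads you to this unnecessarily heavy machinery, and it is precisely where you and the paper part ways. The paper's Step D obtains the convexity \emph{softly}, using only the already-established fact that $f^m$ is positive and decreasing to zero: if $(f^m)''<0$ persisted on a neighbourhood of infinity, then $(f^m)'$ would eventually be bounded above by a negative constant and $f^m\to-\infty$, which is absurd; and if $(f^m)''<0$ only on a sequence of bounded intervals escaping to infinity, the paper argues that such intervals would generate a local maximum of $f^m$, contradicting its monotone decay. No decay rate, no $(f,f')$-phase-plane analysis, and no self-consistency loop enters at any point — the nonlinearity $m>1$ plays no special role in this step beyond guaranteeing that $f$ solves \eqref{SSODE} classically where positive. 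To complete your argument you would either have to carry out the trapping-region construction in full detail (defining the region, verifying invariance, and showing the orbit enters it without circular input), or replace that step by a monotonicity-based contradiction argument of the paper's type; until one of these is done, the central claim $(f^m)''\ge0$ for large $\xi$, and with it the lemma, remains unproven in your write-up.
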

\begin{proof}
We divide the proof into several steps.

\medskip

\noindent \textbf{Step A.} In a first step, let $\xi_0$ be a local minimum point for $f$ (and thus for $f^m$ at the same time), if any. Then $(f^m)''(\xi_0)\geq0$, $f'(\xi_0)=0$ and we infer from \eqref{SSODE} that
$$
\xi_0^{\sigma}f(\xi_0)\leq\alpha f(\xi_0),
$$
whence, taking into account that $f(\xi_0)>0$, we get $\xi_0\leq\alpha^{1/\sigma}$. This shows that a non-compactly supported profile $f(\xi)$ is necessarily monotonic for $\xi>R$ for some $R$ sufficiently large.

\medskip

\noindent \textbf{Step B.} In a second step, suppose for contradiction that $f(\xi)$ has a limit as $\xi\to\infty$, which is finite and nonzero, more precisely
$$
\lim\limits_{\xi\to\infty}f(\xi)=L\in(0,\infty).
$$
Then, since the same is true (with $L^m$) for the function $f^m(\xi)$ and since $f(\xi)$ converges to its limit $L$ either in an increasing way or in a decreasing way, we deduce by applying twice \cite[Lemma 2.9]{IL13a} (either to the function $f(\xi)-L$ if $f$ decreasing near infinity or to the function $L-f(\xi)$ if $f$ increasing near infinity) that there exists a subsequence $\{\xi_k\}_{k\geq1}$ such that
$$
\lim\limits_{k\to\infty}(f^m)''(\xi_k)=\lim\limits_{k\to\infty}\xi_kf'(\xi_k)=0, \quad \lim\limits_{k\to\infty}\xi_k=\infty.
$$
We furthermore obtain by evaluating \eqref{SSODE} at $\xi=\xi_k$ that
$$
\lim\limits_{k\to\infty}(\xi_k^{\sigma}-\alpha)f(\xi_k)=0,
$$
and a contradiction as it is easy to check that the above limit is in fact equal to $+\infty$.

\medskip

\noindent \textbf{Step C.} In a third step, suppose that $f(\xi)\to\infty$ as $\xi\to\infty$, which together with the first step, means that $f$ is increasing for $\xi>R$ with $R>0$ large. Thus $\beta\xi f'(\xi)>0$ for $\xi>R$ and it readily follows that
\begin{equation}\label{interm10}
\lim\limits_{\xi\to\infty}(f^m)''(\xi)=\lim\limits_{\xi\to\infty}(\alpha-\xi^{\sigma})f(\xi)=-\infty.
\end{equation}
Let $L>0$; by \eqref{interm10} there exists $\xi_L>R$ such that $(f^m)''(\xi)<-L$ for $\xi>\xi_L$. Applying Lagrange's theorem on a generic interval $(\xi_L,\xi)$ with $\xi>\xi_L$, we find
$$
(f^m)'(\xi)=(f^m)'(\xi_L)+(\xi-\xi_L)(f^m)''(\overline{\xi})<(f^m)'(\xi_L)-L(\xi-\xi_L), \quad \overline{\xi}\in(\xi_L,\xi),
$$
whence $(f^m)'(\xi)<0$ for $\xi$ sufficiently large, and a contradiction with the assumption that $f$ (and $f^m$) are increasing.

\medskip

\noindent \textbf{Step D.} Gathering the outcome of the three previous steps, we just proved that $f(\xi)\to0$ as $\xi\to\infty$ and it is decreasing on an interval $(R,+\infty)$. We first show that in this case $(f^m)''(\xi)\geq0$ at least for $\xi>R_1>R$ sufficiently large. Suppose that this is not true, thus there exists a sequence of intervals $\{[\xi_k^1,\xi_k^2]\}_{k\geq1}$ such that
$$
(f^m)''(\xi_k^1)=(f^m)''(\xi_k^2)=0, \ (f^m)''(\xi)<0 \ {\rm for} \ \xi\in(\xi_k^1,\xi_k^2), \quad \lim\limits_{k\to\infty}\xi_k^1=\lim\limits_{k\to\infty}\xi_k^2=+\infty.
$$
It is then easy to show that there exists at least a point $\xi_k\in(\xi_k^1,\xi_k^2)$ such that $(f^m)'(\xi_k)=0$ and $(f^m)''(\xi_k)<0$, hence $\xi_k$ is a point of local maxima for $f^m$, in contradiction to the fact that $f^m$ is decreasing to zero for $\xi\in(R,\infty)$. Thus, $(f^m)''(\xi)\geq0$ for $\xi$ large and it follows from \eqref{SSODE} that
\begin{equation}\label{interm11}
(\xi^{\sigma}-\alpha)f(\xi)+\beta\xi f'(\xi)\leq 0, \quad \xi\in(R_1,\infty).
\end{equation}
The inequality \eqref{interm11} can be written equivalently as
$$
\frac{f'(\xi)}{f(\xi)}\leq-\frac{\xi^{\sigma}-\alpha}{\beta\xi}=-\sigma\xi^{\sigma-1}+\frac{\sigma+2}{m-1}\frac{1}{\xi},
$$
which can be directly integrated over a generic interval $(\xi_0,\xi)$ for some $\xi_0>R_1$ fixed to obtain that
$$
f(\xi)\leq\left[e^{\xi_0^{\sigma}}f(\xi_0)\xi_0^{-(\sigma+2)/(m-1)}\right]\xi^{(\sigma+2)/(m-1)}e^{-\xi^{\sigma}},
$$
which leads to \eqref{beh.Q41}.
\end{proof}
Coming back to the phase space associated to the system \eqref{PSsyst}, Lemma \ref{lem.Q41} implies in particular that $XZ\to0$ as $\xi\to\infty$ along all the profiles $f$ that are positive everywhere. This allows us to complete the local analysis near the critical point $Q_4$.
\begin{lemma}\label{lem.Q42}
The critical point $Q_4=(0,0,1,0)$ on the Poincar\'e hypersphere is a non-hyperbolic critical point of the system \eqref{PSsyst}, which behaves like an attractor for the connections coming from the finite part of the phase space associated to the system \eqref{PSsyst}. All the profiles contained in orbits entering this point have a \emph{tail at the space infinity} of the form
\begin{equation}\label{beh.Q42}
f(\xi)\sim K\xi^{(\sigma+2)/(m-1)}e^{-\xi^{\sigma}}, \quad {\rm as} \ \xi\to\infty,
\end{equation}
for some constant $K>0$.
\end{lemma}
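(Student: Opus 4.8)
The plan is to read off the tail from the radial ODE \eqref{SSODE} while using Lemma~\ref{lem.Q41} as the indispensable a priori input, and to treat the degeneracy of $Q_4$ by monotonicity rather than by linearisation. First I would record what ``entering $Q_4$'' means at the level of profiles: since $Q_4$ corresponds to $Z\to\infty$ and $Z=\xi^\sigma/\alpha$, any such orbit has $\xi\to\infty$; moreover it must stay in $\{X>0\}$ (an orbit touching $X=0$ lies in the invariant plane $\{X=0\}$ and cannot run to $\xi=\infty$), so the underlying profile is positive on all of $(0,\infty)$ and Lemma~\ref{lem.Q41} applies. This delivers simultaneously the upper bound $f(\xi)\le K\xi^{(\sigma+2)/(m-1)}e^{-\xi^\sigma}$, the convexity $(f^m)''\ge0$ for $\xi$ large (Step~D of that lemma), and the fact $XZ\to0$, which makes the coupling term $-XZ$ in the $\dot Y$–equation negligible near $Q_4$. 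The non-hyperbolicity itself I would establish by passing to the chart $u=X/Z,\ v=Y/Z,\ w=1/Z$, rescaling time via $d\tau=w\,d\eta$, and computing the Jacobian at the origin; following part~(c) of \cite[Theorem~5, Section~3.10]{Pe} this gives a purely quadratic field whose linearisation is nilpotent, so the asymptotics cannot be extracted from a linear analysis.

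The heart of the argument is a renormalised profile. I would set
\[
w(\xi):=\xi^{-(\sigma+2)/(m-1)}\,e^{\xi^\sigma}\,f(\xi),
\]
and compute, directly from \eqref{SSODE} and the values of $\alpha,\beta$, the clean identity
\[
\frac{w'(\xi)}{w(\xi)}=-\frac{(f^m)''(\xi)}{\beta\,\xi\,f(\xi)}.
\]
Because $(f^m)''\ge0$ for large $\xi$ (Lemma~\ref{lem.Q41}), this forces $w$ to be eventually decreasing; being positive, it converges, $w(\xi)\to L\in[0,\infty)$. The desired tail \eqref{beh.Q42} is exactly the statement $w\to K:=L$ with $L>0$, and along the way one sees that $Q_4$ is an attractor: $\dot Z=\sigma XZ\ge0$ drives $Z\to\infty$ while $X,Y\to0$ on these profiles, so orbits from the finite region are funnelled into $Q_4$.

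The main obstacle is to rule out the degenerate possibility $L=0$, equivalently to upgrade the one–sided bound to the asymptotic equivalence $(f^m)''=o(\xi^\sigma f)$. The difficulty is genuine: substituting $(f^m)''\ge0$ back into \eqref{SSODE} only reproduces the upper bound, so at leading order the estimate is circular — this is the analytic shadow of the non-hyperbolicity of $Q_4$. My plan to break the loop is to prove that $\int^{\infty}(f^m)''/(\beta\xi f)\,d\xi<\infty$, which by the identity above yields $L=\lim w>0$. Writing $g=f^m$ and $\rho=-g'/g=-mf'/f$, one has $(f^m)''/g=\rho^2-\rho'$ and hence
\[
\frac{(f^m)''}{\beta\,\xi\,f}=\frac{(\rho^2-\rho')\,f^{m-1}}{\beta\,\xi},
\]
so the exponential factor in the bound $f^{m-1}\le K^{m-1}\xi^{\sigma+2}e^{-(m-1)\xi^\sigma}$ of Lemma~\ref{lem.Q41} supplies all the decay needed for integrability, \emph{provided} $\rho$ grows at most polynomially.

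The crux is therefore this polynomial control on $\rho=-mf'/f$. Here I would exploit the self–consistent relation obtained by combining the two displays,
\[
\rho=m\sigma\xi^{\sigma-1}-\frac{m(\sigma+2)}{(m-1)\xi}+\frac{m(\rho^2-\rho')\,f^{m-1}}{\beta\,\xi},
\]
and argue that faster-than-tail decay is impossible: if $\rho$ grew faster than $m\sigma\xi^{\sigma-1}$, the quadratic feedback term $\rho^2 f^{m-1}$ would reinforce that growth in a runaway that is incompatible with $f>0$ on all of $(0,\infty)$ (it would drive $f$ to vanish at a finite $\xi$, creating an interface), contradicting the positivity guaranteed by entry into $Q_4$. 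With the polynomial bound in hand, $\rho\sim m\sigma\xi^{\sigma-1}$, the correction term is integrable, $L>0$ follows, and integrating the reduced balance $f'/f\sim-\sigma\xi^{\sigma-1}+\tfrac{\sigma+2}{(m-1)\xi}$ returns precisely \eqref{beh.Q42}.
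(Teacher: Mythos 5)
Your renormalization is a sound start: the identity $w'/w=-(f^m)''/(\beta\xi f)$ for $w(\xi)=\xi^{-(\sigma+2)/(m-1)}e^{\xi^\sigma}f(\xi)$ checks out, Step D of Lemma \ref{lem.Q41} does give eventual monotonicity of $w$, and you correctly isolate the entire difficulty in excluding $L=0$. But the step on which everything hinges --- the ``runaway'' claim that super-tail growth of $\rho=-mf'/f$ is self-reinforcing through the quadratic feedback $\rho^2f^{m-1}$ --- is a heuristic whose mechanism is actually backwards, and no differential inequality is ever derived from it. The feedback term carries the factor $f^{m-1}$, which is exponentially small precisely in the regime you want to exclude: if $f$ decayed faster than the tail, say $f\sim e^{-\lambda\xi^\tau}$ with $\tau>\sigma$, then $\rho^2f^{m-1}/(\beta\xi)\to0$, so the feedback is negligible rather than amplifying, and your self-consistent relation would simply force $\rho\sim m\sigma\xi^{\sigma-1}$, contradicting the ansatz. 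That rules out a smooth faster-decaying \emph{ansatz}, but inconsistency of an ansatz is not a proof: the dangerous scenario is intermittent behavior in which $\rho^2-\rho'$ becomes large through $-\rho'$ rather than through $\rho^2$, and your argument says nothing about it. Likewise, converting hypothetical growth of $\rho$ into vanishing of $f$ at a \emph{finite} $\xi$ (the interface contradiction you invoke) is exactly the missing quantitative step. Finally, even granted polynomial control of $\rho$, the integrability of $\int(\rho^2-\rho')f^{m-1}/(\beta\xi)\,d\xi$ still requires handling the $-\rho'$ piece (e.g.\ by an integration by parts using $(f^{m-1})'=-\frac{m-1}{m}\rho f^{m-1}$), which your sketch omits.

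The paper sidesteps this analytic impasse entirely by working in the phase space: the substitution $W=XZ$ maps $Q_4$ to the origin of the system \eqref{PSsyst2} (Lemma \ref{lem.Q41} guaranteeing that every everywhere-positive profile lands there), a center manifold computation reduces the flow to the homogeneous quadratic system \eqref{interm13}, the origin of that system is classified after Lyagina as an elliptic sector filling the first quadrant, and the reduced flow integrates explicitly to $cK(X)\exp\left(-\frac{\sigma+2}{\sigma}K(X)\right)=X$ with $K=Z$ and $c>0$ arbitrary. Undoing the changes of variables, this is exactly \eqref{beh.Q42} with $K>0$; the degenerate case $L=0$ never arises because the only orbit entering the origin outside this one-parameter family is the invariant axis $\{X=0\}$, which carries no profile. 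The elliptic-sector structure also yields the attractor property of $Q_4$ --- an open neighborhood on the finite side such that any orbit entering it must converge to $Q_4$ --- which is needed later (e.g.\ for the openness of $A_0$ in Lemma \ref{lem.large4}); your closing remark that $\dot Z=\sigma XZ\ge0$ ``funnels'' orbits into $Q_4$ presupposes convergence and does not establish this open-neighborhood stability. So the proposal has two genuine gaps, the $L>0$ step and the attractor step, both of which the paper's center-manifold/elliptic-sector argument supplies at once.
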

\begin{proof}
We perform the change of variable $W:=XZ$ in the system \eqref{PSsyst}, obtaining the following system
\begin{equation}\label{PSsyst2}
\left\{\begin{array}{ll}\dot{X}=X[(m-1)Y-2X],\\
\dot{Y}=-Y^2-\frac{\beta}{\alpha}Y+X-XY-W,\\
\dot{W}=W[(m-1)Y+(\sigma-2)X].\end{array}\right.
\end{equation}
Notice that Lemma \ref{lem.Q41} implies that for a non-compactly supported profile $f$ we have $X(\xi)=(m/\alpha)\xi^{-2}f^{m-1}(\xi)\to0$, $Y(\xi)=(m/\alpha)\xi^{-1}(f^{m-1})'(\xi)\to0$ and $W(\xi)=(XZ)(\xi)\to0$ as $\xi\to\infty$, thus any profile contained in an orbit of \eqref{PSsyst} connecting to $Q_4$ transforms into a profile contained in an orbit of the new system \eqref{PSsyst2} connecting to the origin of it. It thus only remains to analyze locally the flow of the system \eqref{PSsyst2} near the critical point $(X,Y,W)=(0,0,0)$. The linearization of the system \eqref{PSsyst2} near the point $(0,0,0)$ has the matrix
$$M(Q_4)=\left(
         \begin{array}{ccc}
           0 & 0 & 0 \\
           1 & -\frac{\beta}{\alpha} & -1 \\
           0 & 0 & 0 \\
         \end{array}
       \right),
$$
hence near this point we have a one-dimensional stable manifold and a two-dimensional center manifold. In order to study the center manifold, we first do the change of variable
$$
T:=-X+\frac{\beta}{\alpha}Y+W,
$$
and the system \eqref{PSsyst2} becomes (after rather long calculations):
\begin{equation}\label{interm12}
\left\{\begin{array}{ll}\dot{X}=X[(\sigma+2)T+\sigma X-(\sigma+2)W],\\
\dot{T}=-\frac{\beta}{\alpha}T-\frac{m\sigma+m+1}{m-1}X^2-\frac{m(\sigma+2)}{m-1}W^2+\frac{3(\sigma+1)(m-1)+2(\sigma+2)}{m-1}XW+o(T),\\
\dot{W}=W[(\sigma+2)T+2\sigma X-(\sigma+2)W].\end{array}\right.
\end{equation}
We use again the Local Center Manifold Theorem \cite[Theorem 1, Section 2.12]{Pe} and we look for a two-dimensional center manifold of the form
$$
T(X,W)=aX^2+bXW+cW^2+O(|(X,W)|^3),
$$
with coefficients $a$, $b$ and $c$ to be determined. We next readily identify the coefficients by taking into account only the quadratic terms in $X$ and $W$ in the system \eqref{interm12} and deduce the equation of the center manifold:
\begin{equation*}
\begin{split}
T(X,W)&=-\frac{\sigma+2}{m-1}\left[\frac{m\sigma+m+1}{m-1}X^2+\frac{m(\sigma+2)}{m-1}W^2-\frac{3(\sigma+1)(m-1)+2(\sigma+2)}{m-1}XW\right]\\&+O(|(X,W)|^3).
\end{split}
\end{equation*}
Moreover, we readily deduce by using again the Local Center Manifold Theorem \cite[Theorem 1, Section 2.12]{Pe} and simply neglecting the terms in $R$ (that are of higher order) from the equations for $\dot{X}$ and $\dot{W}$ in \eqref{interm12} that the flow on the center manifold is given by the reduced system
\begin{equation}\label{interm13}
\left\{\begin{array}{ll}\dot{X}=X[\sigma X-(\sigma+2)W],\\
\dot{W}=W[2\sigma X-(\sigma+2)W],\end{array}\right.
\end{equation}
whose flow near the non-hyperbolic critical point $(X,W)=(0,0)$ has been classified in the paper \cite{Ly51}. Indeed, system \eqref{interm13} also writes
\begin{equation}\label{interm15}
\frac{dW}{dX}=f\left(\frac{W}{X}\right), \quad f(k)=\frac{2\sigma k-(\sigma+2)k^2}{\sigma-(\sigma+2)k},
\end{equation}
which is of the general form $f(k)=(a+bk+ck^2)/(d+ek+fk^2)$ as in \cite{Ly51}, with coefficients $a=f=0$, $c=e=-(\sigma+2)$, $b=2\sigma$ and $d=\sigma$. With this notation we notice that
$$
f'(0)=\frac{b}{d}=2>1, \ \ \lim\limits_{k\to\infty}f'(k)=\frac{c}{e}=1, \ \ \frac{b-d}{e}=-\frac{\sigma}{\sigma+2}<0,
$$
thus we are in the case of the phase portrait number 7 in \cite[p. 176]{Ly51} but having to interchange the quadrants, as in the mentioned paper the author assumes without loss of generality that $(b-d)/e>0$. Thus, the phase portrait of the flow near the origin of the system \eqref{interm13} in the first quadrant (since we only work with $X\geq0$ and $W=XZ\geq0$) is given by an elliptic sector with cycles formed by connections going out of the origin on the center manifold and also entering the origin tangent to the same center manifold, see Figure \ref{fig2} below (the reader can also see \cite[Case 7, p. 176]{Ly51} or also \cite[Figure 5, p. 152, Section 2.11]{Pe}).

\begin{figure}[ht!]
  % Requires \usepackage{graphicx}
  \begin{center}
  \includegraphics[width=9cm,height=8cm]{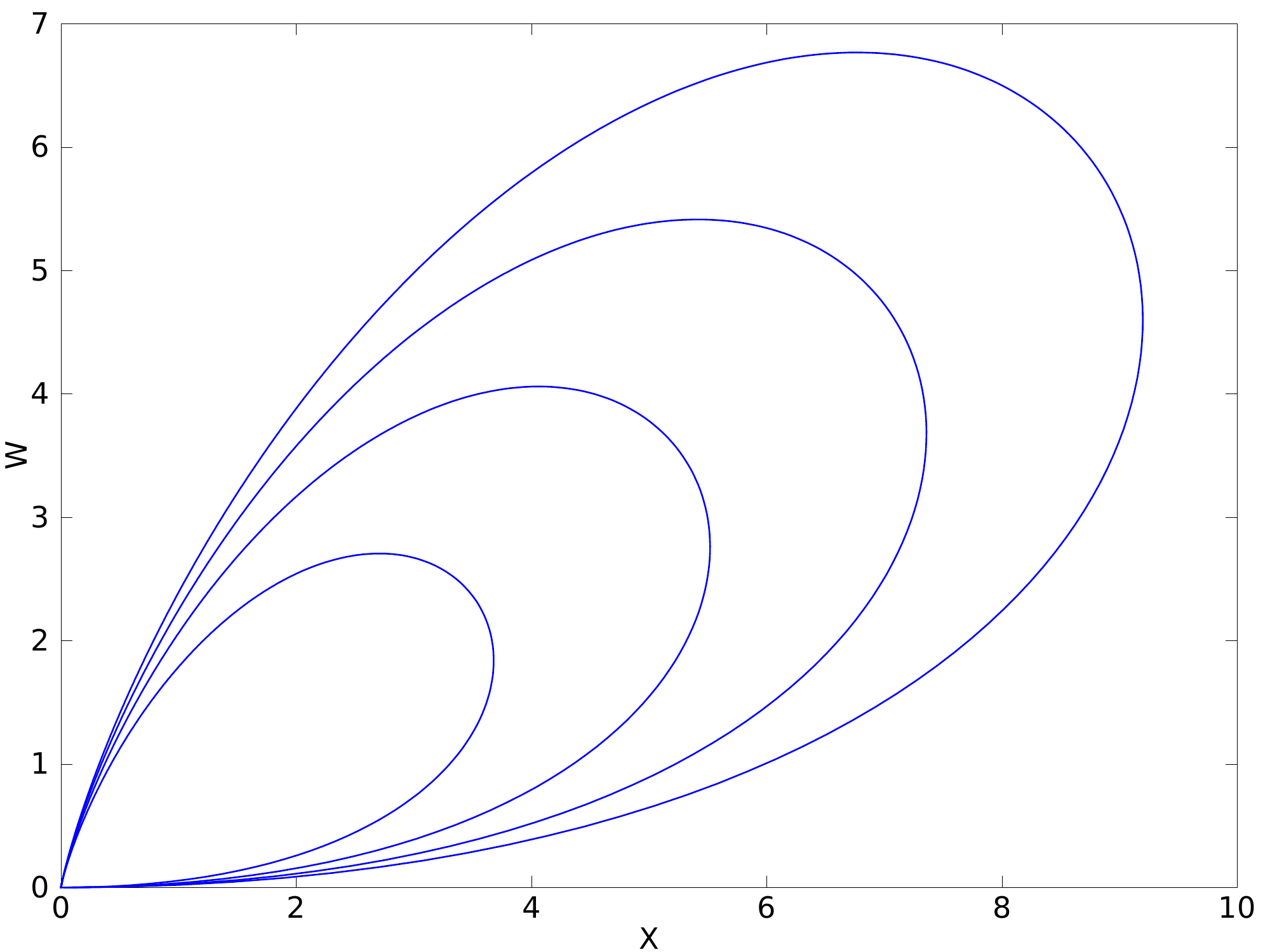}
  \end{center}
  \caption{Local behavior of the system \eqref{interm13} with the elliptic sector at the origin.}\label{fig2}
\end{figure}

Since by definition $W=XZ$, the orbits going out of the origin in the system \eqref{interm13} inherit the behavior of the connections going out of the point $P_0$ in the system \eqref{PSsyst}, established in Lemma \ref{lem.1}. That is, they contain profiles such that
$$
f(0)=0, \ f(\xi)\sim k\xi^{(\sigma+2)/(m-1)}, \quad {\rm as} \ \xi\to0.
$$
On the other hand, analyzing the connections that come from the orbits of the system \eqref{PSsyst} with $Z\to\infty$, it follows that in a neighborhood of the critical point $(X,W)=(0,0)$ of the system \eqref{interm13} $W$ dominates over $X$, so that $\dot{X}$, $\dot{W}<0$ and these are thus the orbits entering the origin in the system \eqref{interm13}, which connect to the ones going out from the same critical point forming an elliptic sector, as explained above. Finally, we integrate the system \eqref{interm13} focusing only on the behavior of the connections entering the origin (that is, as $\xi\to\infty$). We first notice that
$$
\frac{dW}{dX}=\frac{W}{X}\frac{2\sigma X-(\sigma+2)W}{\sigma X-(\sigma+2)W},
$$
and letting $W=W(X):=XK(X)$, we further obtain that
$$
K'(X)=\frac{\sigma K(X)}{X[\sigma-(\sigma+2)K(X)]},
$$
which can be integrated explicitly to get
\begin{equation}\label{interm14}
cK(X)\exp\left(-\frac{\sigma+2}{\sigma}K(X)\right)=X, \quad c>0 \ {\rm arbitrary}.
\end{equation}
On the other hand, coming back to the initial variables and recalling that $W=XZ$, we find that in fact $K=Z$ and replacing in \eqref{interm14} the expressions of $X$, $Z$ in terms of profiles from \eqref{ph.space} we finally get
$$
\frac{m}{\alpha}\xi^{-2}f^{m-1}(\xi)=c\frac{1}{\alpha}\xi^{\sigma}\exp\left(-\frac{\sigma+2}{\sigma}\frac{\sigma(m-1)}{\sigma+2}\xi^{\sigma}\right),
$$
and we easily infer that $f(\xi)$ behaves as stated in \eqref{beh.Q42}. Moreover, recalling that the orbits reaching the point $Q_4$ (that is, with $Z\to\infty$) were mapped via the change of variable $W=XZ$ into the orbits entering the origin over the center manifold in the system \eqref{PSsyst2}, we readily infer from the Center Manifold Theorem and from the form of the elliptic sector near the origin of the system \eqref{interm13} that $Q_4$ behaves like an attractor for the orbits coming from the finite part of the phase space associated to the initial system \eqref{PSsyst}. More precisely, since small neighborhoods of $Q_4$ in the finite region of the phase space $Z<\infty$ are mapped into neighborhoods of the origin of the system \eqref{PSsyst2} via the continuous change of variable $Z\mapsto W=XZ$, it follows that there is an open finite-neighborhood (that is, lying inside the region $Z<\infty$) of $Q_4$ such that, if an orbit enters that open neighborhood, it then necessarily enters $Q_4$.
\end{proof}

\section{Existence of good profiles with interface}\label{sec.exist}

This section is devoted to the proof of Theorem \ref{th.exist}. It will follow the same steps as the proof of the analogous Theorem 1.2 in \cite[Section 3]{IS2}, thus some technical parts will be omitted here if they are identical to the proof in the above mentioned section. However, since we deal with the critical case $p=1$, some alternative, simpler proofs of the preliminary results can be given. As a preliminary result, similar to \cite[Proposition 3.3]{IS2}, we have the following
\begin{lemma}\label{lem.uniq}
For any $\xi_0\in(0,\infty)$, there exists a unique profile $f(\xi)$ solution to \eqref{SSODE} such that
$$
f(\xi)>0 \ {\rm for} \ \xi\in(0,\xi_0), \qquad f(\xi)=0 \ {\rm for}
\ \xi\geq\xi_0, \qquad \lim\limits_{\xi\to\xi_0}(f^m)'(\xi)=0,
$$
that is, a unique profile having an interface exactly at $\xi=\xi_0$.
\end{lemma}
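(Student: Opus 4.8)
The plan is to prove both existence and uniqueness by a shooting argument anchored at the interface $\xi_0$, which is the only place where \eqref{SSODE} degenerates: there $f=0$, so the coefficient $m f^{m-1}$ multiplying $f''$ in $(f^m)''$ vanishes and the usual local existence and uniqueness theory does not apply directly. The first task is to pin down the only admissible local behavior as $\xi\to\xi_0^-$. This is precisely where the critical exponent $p=1$ simplifies the analysis: any orbit of \eqref{PSsyst} reaching the interface tends to a point $P_1^\gamma$ with $Y\to-\beta/\alpha$, and since $Y=\tfrac{m}{\alpha(m-1)}\xi^{-1}(f^{m-1})'$ by \eqref{ph.space}, the relation $Y\to-\beta/\alpha$ integrates immediately to
\[
f^{m-1}(\xi)\sim\frac{m-1}{2m\sigma}\bigl(\xi_0^2-\xi^2\bigr),\qquad \xi\to\xi_0^-,
\]
equivalently $f(\xi)\sim A(\xi_0-\xi)^{1/(m-1)}$ with $A^{m-1}=(m-1)\xi_0/(m\sigma)$; this is exactly the interface behavior \eqref{beh.P1} with $\gamma=\xi_0^\sigma/\alpha$. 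So the singular part of any candidate solution emanating from $\xi_0$ is uniquely prescribed.

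Next I would convert the problem into an integral equation tailored to a fixed-point argument. Integrating \eqref{SSODE} twice from $\xi_0$ and using $f(\xi_0)=(f^m)'(\xi_0)=0$, one checks that \eqref{SSODE} together with the interface conditions is equivalent to
\[
f^m(\xi)=\int_\xi^{\xi_0}\bigl[\beta r+(r-\xi)(\alpha+\beta-r^\sigma)\bigr]f(r)\,dr .
\]
To obtain a contraction I would factor out the singular behavior, writing $f(\xi)=A(\xi_0-\xi)^{1/(m-1)}\rho(\xi)$ and recasting the identity above as a fixed-point problem for the correction $\rho$ in a small closed ball around $\rho\equiv1$ in $C([\xi_0-\delta,\xi_0])$. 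The extracted weight sends the map away from the degeneracy $f=0$, so that the otherwise non-Lipschitz $m$-th root is evaluated near the value $1$ and becomes harmless; a direct estimate then shows that for $\delta>0$ small the map is a contraction, yielding a unique local solution positive on $(\xi_0-\delta,\xi_0)$ with interface exactly at $\xi_0$. The same existence and uniqueness can alternatively be read off from Lemma \ref{lem.2}: taking $\gamma=\xi_0^\sigma/\alpha$, the unique orbit entering $P_1^\gamma$ transversally to the invariant plane $\{X=0\}$ produces, via $\xi=(\alpha Z)^{1/\sigma}$ and the monotonicity of $Z$, exactly one profile whose interface sits at $\xi_0$.

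Finally I would extend this germ to all of $(0,\xi_0)$. On any interval where $f>0$ the equation \eqref{SSODE}, written as a first-order system for $(f,f')$, has a locally Lipschitz right-hand side, so the local solution continues uniquely toward smaller $\xi$; to see that it remains strictly positive on $(0,\xi_0)$ I would combine the continuation with the sign analysis of \eqref{SSODE}, whose structure confines the interior extrema of $f$ by the threshold $\xi=\alpha^{1/\sigma}$ (arguing as in Step A of the proof of Lemma \ref{lem.Q41}), precluding a return to zero in the interior. Uniqueness on the whole interval then follows from the uniqueness of the local germ at the interface together with the uniqueness of the regular continuation. I expect the genuine difficulty to be concentrated entirely in the second paragraph, namely handling the degeneracy at $\xi_0$ and verifying the contraction, everything else being standard ODE continuation.
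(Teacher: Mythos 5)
Your core argument is sound and takes a genuinely different route from the paper. The paper's proof is a one-liner: it invokes Lemma \ref{lem.2} (for each $\gamma>0$ there is exactly one orbit entering $P_1^{\gamma}$ transversally to the invariant plane $\{X=0\}$, established via center/stable-manifold theory) together with the bijection $\gamma=\xi_0^{\sigma}/\alpha$ induced by $Z=\xi^{\sigma}/\alpha$ — which is exactly the alternative you mention in passing at the end of your second paragraph. Your main route instead makes the local existence--uniqueness quantitative and self-contained: your double-integrated identity $f^m(\xi)=\int_\xi^{\xi_0}[\beta r+(r-\xi)(\alpha+\beta-r^\sigma)]f(r)\,dr$ is correct (the $\beta\xi f$ boundary term from integrating $\beta r f'$ by parts recombines as claimed), the factorization $f=A(\xi_0-\xi)^{1/(m-1)}\rho$ with $A^{m-1}=(m-1)\xi_0/(m\sigma)$ matches \eqref{beh.P1}, and the contraction mechanism is real: the induced map is, to leading order, an averaging operator of norm one composed with the $m$-th root near the value $1$, so its Lipschitz constant is $1/m+O(\delta)<1$. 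This buys an elementary proof that avoids citing \cite{CH} and \cite{dPS02}. One caveat on your first paragraph: the assertion that \emph{every} orbit realizing an interface at $\xi_0$ must enter $P_1^{\gamma}$ (rather than some other limit) is needed for full uniqueness and deserves justification — within the paper's framework it follows because Lemma \ref{lem.4} excludes connections to $P_0^{\gamma}$ (ruling out the degenerate contact $Y\to0$) and orbits entering $Q_3$ have $(f^m)'(\xi_0)<0$, violating the interface condition; your contraction alone only gives uniqueness among solutions with the prescribed singular germ.

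There is, however, a genuine error in your third paragraph: the claim that the backward continuation \emph{remains strictly positive on} $(0,\xi_0)$, argued via Step A of Lemma \ref{lem.Q41}, does not hold and the cited step cannot give it. Step A only constrains interior \emph{local minima} of a positive profile to the region $\xi\leq\alpha^{1/\sigma}$; it says nothing against the profile passing a local maximum and then decreasing to zero backwards with $(f^m)'>0$ at the vanishing point — no interior minimum of a positive value is involved in that scenario. Indeed, Proposition \ref{prop.far} of the paper shows precisely this happens: for $\xi_0$ large the unique profile with interface at $\xi_0$ vanishes at some $\theta>0$ with $(f^m)'(\theta)>0$, so strict positivity on all of $(0,\xi_0)$ is false in general. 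To be fair, the lemma's statement as printed includes this positivity clause, the paper's own proof silently ignores it, and the paper's subsequent usage (Propositions \ref{prop.far} and \ref{prop.close}) treats $f_\eta$ as the unique solution germ with interface at $\eta$, continued past possible sign changes. So your first two paragraphs prove the lemma's actual content — unique solution with interface exactly at $\xi_0$, positive on a maximal left interval — but the positivity argument in your last paragraph should be deleted or weakened to "positive on a maximal interval $(\theta,\xi_0)$ with $\theta\geq0$", since as stated it contradicts the paper's own Proposition \ref{prop.far}.
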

\begin{proof}
The conclusion follows readily from the local analysis of the points $P_1^{\gamma}$ for any $\gamma>0$ performed in Lemma \ref{lem.2} together with the definition of $Z=\xi^{\sigma}/\alpha$ realizing a one-to-one matching of interface points $\xi_0\in(0,\infty)$ and critical points $P_1^{\gamma}$ with $Z=\gamma=\xi_0^{\sigma}/\alpha$.
\end{proof}
The proof of Theorem \ref{th.exist} is, analogously as in \cite[Section 3]{IS2}, based on two independent propositions which establish how the (unique) profile with interface at a given point $\eta\in(0,\infty)$ (not necessarily a good profile in the sense of Definition \ref{def1}) behaves when either $\eta$ is very large, or very close to zero. This is another example of employment of the \emph{backward shooting method from the interface}, also used in our previous work \cite{IS2} and for the first time, up to our knowledge, in \cite{GP76}. We give below the statements and the proofs of these two propositions. Along all this section, $\sigma>0$ is fixed.
\begin{proposition}\label{prop.far}
There exists $\eta_f>0$ such that for any $\eta>\eta_f$, the unique profile $f_{\eta}$ solution to \eqref{SSODE} with interface point at $\xi=\eta$ changes sign backwards at some positive point, that is, there exists $\theta>0$ such that
$$
f_{\eta}(\theta)=0, \quad (f_{\eta}^m)'(\theta)>0, \quad f_{\eta}(\xi)>0 \ {\rm for} \ \xi\in(\theta,\eta).
$$
\end{proposition}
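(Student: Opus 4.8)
The plan is to transport the statement into the phase space of Section~\ref{sec.local} and prove it by a backward shooting argument. By Lemma~\ref{lem.uniq}, the unique profile $f_{\eta}$ with interface at $\xi=\eta$ corresponds to the unique orbit entering the critical point $P_1^{\gamma}$ with $\gamma=\eta^{\sigma}/\alpha$ from outside the invariant plane $\{X=0\}$, tangent to the eigenvector $e_1$ of Lemma~\ref{lem.2}. The behavior claimed in the statement, namely $f_{\eta}(\theta)=0$, $(f_{\eta}^m)'(\theta)>0$ with $f_{\eta}>0$ on $(\theta,\eta)$, is exactly the behavior of the profiles emanating from the critical point $Q_2=(0,1,0,0)$ recorded in Lemma~\ref{lem.6}. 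Hence the whole proposition reduces to the following phase-space statement: \emph{for $\gamma$ large enough, the backward continuation of the orbit entering $P_1^{\gamma}$ has $Q_2$ as its backward limit.} Note that large $\eta$ is equivalent to large $\gamma$, so the hypothesis $\eta>\eta_f$ becomes a largeness assumption on $\gamma$.

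First I would read off the direction in which this orbit leaves $P_1^{\gamma}$ backwards: since it enters tangent to $e_1$ (the stable direction $\lambda_1<0$) on the branch where $X>0$, it departs backwards tangent to $-e_1$. Computing $e_1$ from Lemma~\ref{lem.2} and letting $\gamma\to\infty$, the dominant components of $-e_1$ have $X$-component positive, $Y$-component $\sim\alpha\gamma/(m\beta)>0$, and $Z$-component $\sim-\alpha\sigma\gamma/((m-1)\beta)<0$. Thus for large $\gamma$ the orbit immediately enters the region $\{X>0,\ Z\gg1\}$ with $Y$ increasing away from $-\beta/\alpha$.

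The core is then a trapping estimate driven by the largeness of $Z$. Writing the second equation of \eqref{PSsyst} as
$$\dot Y=-\left(Y^2+\frac{\beta}{\alpha}Y+X(Y+Z-1)\right),$$
the bracket is strictly positive whenever $X>0$ and $Z$ exceeds a fixed threshold $Z_0>1$: the only possibly negative term $\frac{\beta}{\alpha}Y$ stays bounded while $Y$ is bounded and is dominated by $X(Y+Z-1)>0$. Since along the backward flow $Y$ increases exactly when $\dot Y<0$, the variable $Y$ increases monotonically backwards. As soon as $Y$ is large and positive one has $\dot X=X[(m-1)Y-2X]>0$ on $\{X<\tfrac{m-1}{2}Y\}$, so along the backward flow $X$ decreases there and cannot blow up; this excludes the backward orbit from escaping towards $Q_1$ or $Q_5$. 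Finally, from $\dot Z=\sigma ZX\geq0$ the variable $Z$ is monotone, decreasing backwards only. Assembling these monotonicities I would build an explicit wedge of the form $\{X>0,\ Y>-\beta/\alpha,\ Z>Z_0,\ X<\tfrac{m-1}{2}Y\}$ (suitably completed near $P_1^{\gamma}$, where the initial departure is governed by $-e_1$ rather than by the estimate) which is closed under the backward flow and inside which the only admissible backward limit is $Q_2$. Invoking Lemma~\ref{lem.6} then yields the interface $\theta=(\alpha Z(\theta))^{1/\sigma}\in(0,\infty)$ with $f_{\eta}(\theta)=0$, $(f_{\eta}^m)'(\theta)>0$, and $f_{\eta}>0$ on $(\theta,\eta)$ because $X>0$ strictly between $Q_2$ and $P_1^{\gamma}$.

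The hard part will be the quantitative control of $Z$: one must show that the total descent of $Z$ along the backward orbit, during the excursion in which $Y$ runs from $-\beta/\alpha$ to $+\infty$, is bounded so that $Z$ remains above $Z_0$ the entire time, never re-entering the region of small $Z$ where the sign of $\dot Y$ could fail. This is precisely the place where $\gamma=\eta^{\sigma}/\alpha\to\infty$ is used, and it is the analogue of the corresponding estimate in \cite[Section~3]{IS2}; verifying the invariance of the faces of the trapping wedge (in particular gluing the near-$P_1^{\gamma}$ linear regime to the large-$Z$ estimate) is the delicate technical point of the proof.
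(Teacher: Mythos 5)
Your reduction is sound as far as it goes: by Lemmas \ref{lem.uniq}, \ref{lem.2} and \ref{lem.6} the proposition is indeed equivalent to showing that, for $\gamma=\eta^{\sigma}/\alpha$ large, the backward continuation of the unique orbit entering $P_1^{\gamma}$ has $Q_2$ as its backward limit (with $Z$ staying bounded away from $0$, so that $\theta>0$), and your signs for $-e_1$ are correct. But the core estimate is wrong as stated, and the step you defer is the entire content of the proposition. The bracket $Y^2+\frac{\beta}{\alpha}Y+X(Y+Z-1)$ is \emph{not} positive under the condition ``$X>0$ and $Z>Z_0$ for a fixed threshold $Z_0>1$'': at $Y=-\beta/2\alpha$ it equals $-\frac{\beta^2}{4\alpha^2}+X\left(Z-1-\frac{\beta}{2\alpha}\right)$, which is negative whenever the \emph{product} $XZ$ is small — and small $X$ is exactly the regime along your backward orbit, since $X$ departs from $0$ at $P_1^{\gamma}$. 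The correct threshold is thus on $W=XZ$ (one needs $XZ\gtrsim\beta^2/4\alpha^2$ throughout the strip $-\beta/\alpha<Y<0$), near $P_1^{\gamma}$ positivity comes instead from the eigenstructure (a factor $1-1/m>0$ after substituting $-e_1$), and nothing in the proposal supplies the quantitative control bridging the two regimes. Moreover the wedge $\{X<\frac{m-1}{2}Y\}$ does not exclude $Q_5$: when the backward orbit crosses $\{Y=0\}$ it has $X>0=\frac{m-1}{2}Y$, i.e., it lies \emph{outside} the wedge, and in that outer region both $X$ and $Y$ increase backward — precisely compatible with backward convergence to $Q_5$, where $X\sim mY\to\infty$. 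Since profiles emanating from $Q_5$ vanish only at $\xi=0$ (with $f\sim K\xi^{1/m}$), that scenario would violate the conclusion $\theta>0$; your monotonicity apparatus asserts but does not prove its exclusion.

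The paper avoids all of this by running the implication in the opposite direction. It passes to the system \eqref{PSsyst2} in $W=XZ$, which collapses the whole line $P_1^{\gamma}$ to the single point $P_1=(0,-\beta/\alpha,0)$; inside the invariant plane $\{X=0\}$, $P_1$ is a saddle whose unique incoming orbit comes from the unstable node $Q_2$. Perturbing off this planar connection, for every $\varepsilon>0$ there are orbits from $Q_2$ into $P_1$ with $X<\varepsilon$ throughout; the sign of the flow on $\{Y=-\beta/2\alpha\}$ forces $W\geq\beta^2/4\alpha^2$ at the crossing, whence $Z=W/X\geq\beta^2/(4\varepsilon\alpha^2)$, so the interface point is at least $\left[\beta^2/(4\varepsilon\alpha)\right]^{1/\sigma}$, arbitrarily large as $\varepsilon\to0$; the uniqueness of the profile per interface point (Lemma \ref{lem.uniq}) then converts this into the statement. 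Note that the quantity you need bounded below — $XZ$ against $\beta^2/4\alpha^2$, on exactly the plane $\{Y=-\beta/2\alpha\}$ — is precisely the paper's crossing inequality, obtained for free by shooting \emph{forward} from a neighborhood of the planar $Q_2\to P_1$ connection rather than backward from $P_1^{\gamma}$. To salvage your scheme you would have to prove the $Z$-descent bound and the $Q_5$-exclusion yourself; as it stands, these are genuine gaps, not technicalities.
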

\begin{proof}
We come back to the system \eqref{PSsyst2} used in Lemma \ref{lem.Q42} by replacing the variable $Z$ by $W=XZ$. Restricting ourselves to the invariant plane $X=0$, we obtain the reduced system
\begin{equation}\label{interm16}
\left\{\begin{array}{ll}\dot{Y}=-Y^2-\frac{\beta}{\alpha}Y-W,\\
\dot{W}=(m-1)YW.\end{array}\right.
\end{equation}
Let us notice that by letting $W=XZ$, the interface points $P_1^{\gamma}$ of the system \eqref{PSsyst} gather into a single critical point $P_1:=(0,-\beta/\alpha,0)$ of the system \eqref{PSsyst2}, which in particular reduces to the point of coordinates $(-\beta/\alpha,0)$ for the system \eqref{interm16}. It is easy to check that this is a saddle point in the phase plane and thus there is only one orbit entering $P_1$ inside the invariant plane $\{X=0\}$. The fact that this orbit entering $P_1$ inside the plane $\{X=0\}$ comes from the unstable node $Q_2$ at infinity is a rather easy fact and has been proved with full details in \cite[Proposition 3.4, Step 1]{IS2}, to which we refer the reader. A visual representation of the phase plane associated to the system \eqref{interm16} in the invariant plane $\{X=0\}$, emphasizing on the unique connection entering $P_1$, is given in Figure \ref{fig3} below.

\begin{figure}[ht!]
  % Requires \usepackage{graphicx}
  \begin{center}
  \includegraphics[width=12cm,height=9cm]{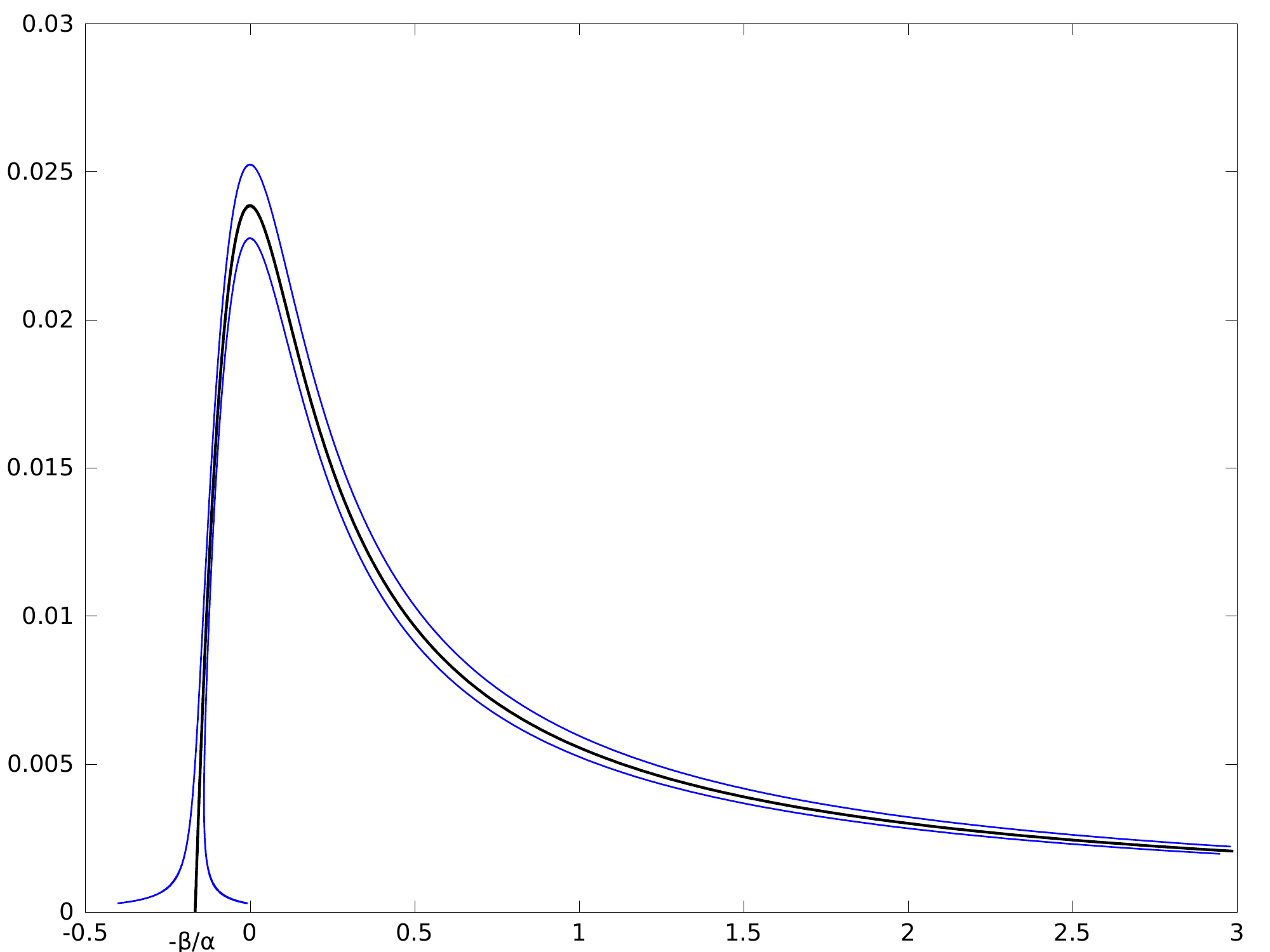}
  \end{center}
  \caption{Orbit entering $P_1$ in the $(Y,W)$ phase-plane associated to the system \eqref{interm16}.} \label{fig3}
\end{figure}

Since $Q_2$ is an unstable node, we obtain by standard continuity arguments that for any $\e>0$, there exists a connection in the phase space associated to the system \eqref{PSsyst2} connecting $Q_2$ to $P_1$ and such that $X(\xi)<\e$ for any $\xi\in(0,\infty)$.

Consider now the plane $\{Y=-\beta/2\alpha\}$. The direction of the flow of the system \eqref{PSsyst2} on this plane is given by the sign of the expression
$$
-\frac{\beta^2}{4\alpha^2}+\frac{\beta^2}{2\alpha^2}+X+\frac{\beta}{2\alpha}X-W=\frac{\beta^2}{4\alpha^2}+X\left(1+\frac{\beta}{2\alpha}\right)-W,
$$
and since the connection coming from $Q_2$ and entering $P_1$ should cross this plane at some point, it can do this only at points where the above expression is negative. This means that at the crossing point, we have
$$
W\geq X\left(1+\frac{\beta}{2\alpha}\right)+\frac{\beta^2}{4\alpha^2}\geq\frac{\beta^2}{4\alpha^2},
$$
and coming back to the initial system \eqref{PSsyst} and taking into account that $X<\e$ along the whole connection, we furthermore get
$$
Z=\frac{W}{X}\geq\frac{W}{\e}\geq\frac{\beta^2}{4\e\alpha^2}.
$$
This shows that along the orbit we consider (with $X(\xi)<\e$ everywhere) we have at least a point where
\begin{equation}\label{interm17}
\xi=(\alpha Z)^{1/\sigma}\geq\left[\frac{\beta^2}{4\e\alpha}\right]^{1/\sigma},
\end{equation}
and the right hand side of \eqref{interm17} can be done as large as we want by letting $\e>0$ very small. Since the interface point of the same connection is more to the right, so that it also satisfies \eqref{interm17}, it follows that any connection containing profiles with very large interface point comes from $Q_2$, reaching the conclusion.
\end{proof}
\begin{proposition}\label{prop.close}
There exists $\eta_c>0$ such that for any $\eta\in(0,\eta_c)$, the unique profile $f_{\eta}$ solution to \eqref{SSODE} with interface point at $\xi=\eta$ is decreasing in $(0,\eta)$.
\end{proposition}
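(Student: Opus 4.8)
The plan is to show that one may simply take $\eta_c:=\alpha^{1/\sigma}$, and in fact to prove the stronger statement that \emph{every} profile with interface point $\eta\le\alpha^{1/\sigma}$ is strictly decreasing on $(0,\eta)$. Rather than tracking the backward orbit in the phase space (as was done for the complementary Proposition \ref{prop.far}), I would argue directly with the differential equation \eqref{SSODE}, via a monotonicity/rightmost--critical--point argument. Throughout write $f=f_\eta$.

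The key observation is a local analysis of the interior critical points of $f$. If $\xi_*\in(0,\eta)$ satisfies $f'(\xi_*)=0$ (so also $(f^m)'(\xi_*)=0$), then evaluating \eqref{SSODE} at $\xi_*$ and recalling $f(\xi_*)>0$ gives
$$
(f^m)''(\xi_*)=(\alpha-\xi_*^{\sigma})\,f(\xi_*).
$$
Since $(f^m)''=m f^{m-1}f''$ at a point where $f'=0$ and $f>0$, the sign of $f''(\xi_*)$ coincides with the sign of $\alpha-\xi_*^{\sigma}$. Hence, as long as $\xi_*<\alpha^{1/\sigma}$, we have $f''(\xi_*)>0$, so that every interior critical point of $f$ lying in the range $\xi<\alpha^{1/\sigma}$ is necessarily a \emph{strict local minimum}. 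This is the quantitative counterpart of Step A in the proof of Lemma \ref{lem.Q41}, now read at the level of the second derivative.

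From here the conclusion for $\eta\le\alpha^{1/\sigma}$ follows quickly. First, the interface asymptotics \eqref{beh.P1} established in Lemma \ref{lem.2} guarantee that $f$ is strictly decreasing on a left--neighborhood $(\eta-\delta,\eta)$, i.e. $f'<0$ there. Suppose, for contradiction, that $f$ is not decreasing on all of $(0,\eta)$. Then the closed set $\{\xi\in(0,\eta):f'(\xi)=0\}$ is nonempty; let $\xi_*$ be its largest element, so that $\xi_*<\eta$ and, by maximality together with $f'<0$ near $\eta$, one has $f'<0$ on the whole of $(\xi_*,\eta)$. Because $\eta\le\alpha^{1/\sigma}$ we have $\xi_*<\alpha^{1/\sigma}$, and the previous paragraph forces $\xi_*$ to be a strict local minimum of $f$; in particular $f'>0$ immediately to the right of $\xi_*$, contradicting $f'<0$ on $(\xi_*,\eta)$. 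Therefore $f'$ has no zero in $(0,\eta)$, and since $f'<0$ near $\eta$ it remains negative throughout by continuity, i.e. $f$ is strictly decreasing on $(0,\eta)$.

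This route is essentially self-contained and, I expect, raises no serious obstacle. The only delicate point is the claim that $f'<0$ on a left--neighborhood of the interface, which is exactly what excludes an oscillatory approach to $\xi=\eta$; this is precisely the content of the interface behavior \eqref{beh.P1}. The two remaining items to check carefully are the closedness of the zero set of $f'$ (so that the supremum $\xi_*$ is actually attained) and the fact that, at $\xi_*$, being a strict local minimum is genuinely incompatible with $f$ decreasing immediately to its right. I would also note, as a useful by-product, that the threshold $\eta_c=\alpha^{1/\sigma}$ is sharp in spirit: the explicit non-monotone profile $f_*$ of Theorem \ref{th.exist} has its interface point strictly beyond $\alpha^{1/\sigma}$, consistent with the dichotomy isolated above.
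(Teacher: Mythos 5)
Your proof is correct, and it lands on exactly the paper's threshold $\eta_c=\alpha^{1/\sigma}$, but it travels by a genuinely different and more elementary route. The paper argues entirely in the phase space: the flow of \eqref{PSsyst} across the plane $\{Y=0\}$ has the sign of $X(1-Z)$, so a crossing from $\{Y>0\}$ to $\{Y<0\}$ requires $Z>1$; since $Z=\xi^{\sigma}/\alpha$ is increasing along orbits, the orbit entering $P_1^{\gamma}$ with $\gamma\in(0,1)$ has $Z<\gamma<1$ throughout, hence can never have visited $\{Y>0\}$, i.e. $f'<0$ on all of $(0,\eta)$ for $\eta=(\alpha\gamma)^{1/\sigma}<\alpha^{1/\sigma}$. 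Your identity $mf^{m-1}(\xi_*)f''(\xi_*)=(\alpha-\xi_*^{\sigma})f(\xi_*)$ at a zero of $f'$ is precisely the pointwise translation of that flow computation (note $Z<1\Leftrightarrow\xi<\alpha^{1/\sigma}$), but you replace the monotone-$Z$ barrier step by a rightmost-critical-point contradiction anchored at the interface asymptotics \eqref{beh.P1}. The delicate points you flag do go through: $f>0$ on $(0,\eta)$ by the definition of the profile in Lemma \ref{lem.uniq}, so $f$ is smooth there and the ODE is nondegenerate; the zero set of $f'$ is relatively closed and, by \eqref{beh.P1}, bounded away from $\eta$, so its supremum $\xi_*$ is attained in $(0,\eta)$; and $f'(\xi_*)=0$ with $f''(\xi_*)>0$ forces $f'>0$ immediately to the right of $\xi_*$, incompatible with $f'<0$ on $(\xi_*,\eta)$. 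As for what each approach buys: yours is self-contained at the level of the profile equation, needs none of the phase-space machinery, and even covers the endpoint $\eta=\alpha^{1/\sigma}$ with strict monotonicity; the paper's version is shorter given the apparatus already built, reuses the same sign computation on $\{Y=0\}$ in Steps 3--4 of Proposition \ref{prop.small}, and sidesteps any attainment-of-supremum discussion because the monotonicity of $Z$ settles the global claim in one stroke. Your closing consistency check on $f_*$ is also sound: by the proof of Theorem \ref{th.exist} its orbit enters $P_1^{\gamma}$ with $\gamma=(m\sigma_*+m+1)/\sigma_*>1$, so its interface point $(\alpha_*\gamma)^{1/\sigma_*}$ indeed lies beyond $\alpha_*^{1/\sigma_*}$, as your dichotomy predicts.
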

As it is a rather trivial fact (see Lemma 3.1 in \cite[Section 3]{IS2} for details) that a good profile $f$ with $f(0)=A>0$, $f'(0)=0$ is increasing in a right-neighborhood of $\xi=0$, the two propositions above prove that the good profiles with interface (if any) can have their interface only inside a compact interval of $(0,\infty)$.
\begin{proof}[Proof of Proposition \ref{prop.close}]
The direction of the flow on the plane $\{Y=0\}$ in the phase space associated to the system \eqref{PSsyst} is given by the sign of $X(1-Z)$, which is negative for $Z>1$, thus a connection in the phase space can cross the plane $\{Y=0\}$ from the positive to the negative side only at points with $Z>1$. Let then $\gamma\in(0,1)$. The orbit entering the critical point $P_1^{\gamma}$ cannot then cross $\{Y=0\}$, as $Z$ is increasing along any orbit (and thus $0\leq Z<\gamma<1$ along it), hence the profile contained in it is decreasing. But this profile entering the point $P_1^{\gamma}$ has interface at point
$$
\xi=(\alpha\gamma)^{1/\sigma}\in(0,\alpha^{1/\sigma}),
$$
and we reach the conclusion by letting $\eta_c:=\alpha^{1/\sigma}$.
\end{proof}
\noindent \textbf{Remark.} The outcome of Propositions \ref{prop.far} and \ref{prop.close} is visually exemplified in Figure \ref{fig1} where we plot profiles with large and, respectively, small (close to zero) interface points.
\begin{proof}[Proof of Theorem \ref{th.exist}]
\noindent \textbf{Existence.} This part follows the same lines as the proof of the analogous result in \cite[Section 3]{IS2}, thus we give here a sketch for the sake of completeness. Let $A$ be the set of interface points $\eta\in(0,\infty)$ such that the unique profile $f_{\eta}$ with interface exactly at $\xi=\eta$ satisfies $f_{\eta}(0)=A>0$ and $f_{\eta}'(0)<0$ and let $B$ be the set of interface points $\eta\in(0,\infty)$ sufficiently big such that the unique profile $f_{\eta}$ with interface exactly at $\xi=\eta$ behaves like in Proposition \ref{prop.far}. It follows from the continuity with respect to parameters that both $A$ and $B$ are open sets; moreover, Propositions \ref{prop.close} and \ref{prop.far} insure that $A\neq\emptyset$, $B\neq\emptyset$ and $B$ contains an unbounded interval $(\eta^*,\infty)$. Let then
$$
\eta_0=\sup A<\eta^*<\infty.
$$
We claim that the good profile with interface we look for is $f_{\eta_0}$, the only profile with interface exactly at $\eta=\eta_0$. To prove this claim (and the theorem), we show that

$\bullet$ $f_{\eta_{0}}$ cannot have a vertical asymptote at $\xi=0$. This is just a calculus exercise, based on the fact that for any generic real function $f$ having a vertical asymptote at $\xi=0$, we have
$$
\lim\limits_{\xi\to0, \xi>0}\frac{f'(\xi)}{f(\xi)}=-\infty,
$$
see \cite[Lemma 3.6]{IS2}. This fact can be applied twice to the function $f_{\eta_0}^m$ to get that
\begin{equation*}
\lim\limits_{\xi\to0,\xi>0}\frac{(f_{\eta_0}^m)''(\xi)}{f_{\eta_0}'(\xi)}=-\infty,
\end{equation*}
which readily leads to a contradiction with Eq. \eqref{SSODE}, since $(f_{\eta_0}^m)''$ dominates over the other terms.

$\bullet$ There is no $\eta_1>0$ such that $f_{\eta_0}(\eta_1)=0$, $f_{\eta_0}(\xi)>0$ for $\xi\in(\eta_0,\eta_1)$. Indeed from the local analysis, if such point $\eta_1$ exists, then $f'(\eta_1)>0$ and thus $\eta_0\in B$, as $f_{\eta_0}$ would behave like the profiles in Proposition \ref{prop.far}. But since $B$ is open, there exists $\e>0$ such that $(\eta_0-\e,\eta_0)\subset B$. Since $\eta_0=\sup A$, then $A\cap B\neq\emptyset$ but this is an obvious contradiction with the definitions of $A$ and $B$.

$\bullet$ If $f_{\eta_0}(0)>0$ then $f_{\eta_0}'(0)=0$. This comes from the fact that $\eta_0=\sup A\not\in A$ (since $A$ is an open set) and thus $f_{\eta_0}'(0)\geq0$. But we cannot have $f_{\eta_0}'(0)>0$ since the set of interface points $\eta$ for which $f_{\eta}(0)>0$ and $f_{\eta}'(0)>0$, if nonempty, is also an open set.

\medskip

From all these considerations, it follows that either $f_{\eta_0}(0)>0$ but with $f_{\eta_0}'(0)=0$, which leads to a good profile with interface satisfying assumption (P1) in Definition \ref{def1}, or $f_{\eta_0}(0)=0$ with $f_{\eta_0}>0$ on $(0,\eta_0)$ and it is easy to check that this is a good profile with interface satisfying assumption (P2) in Definition \ref{def1}, ending the proof. For a fully-detailed proof the reader is referred to \cite[Section 3]{IS2}.

\medskip

\noindent \textbf{Explicit profile.} One can check by direct calculation that the explicit profile defined in Eq. \eqref{expl.prof} for $\sigma=\sigma_*=\sqrt{2(m+1)}$ is a solution to \eqref{SSODE}. Moreover, in the phase-space variables $X$, $Y$, $Z$, this explicit profile is a straight line of equations
\begin{equation*}
\begin{split}
&Y=-\frac{m-1}{\sigma_*+2}+\frac{\sigma_*+2}{m-1}X,\\
&Z=\frac{m\sigma_*+m+1}{\sigma_*}-\frac{(m\sigma_*+m+1)(\sigma_*+2)}{(m-1)^2}X,
\end{split}
\end{equation*}
connecting the critical point $P_2$ with one of the critical points with interface behavior $P_1^{\gamma}$, for $\gamma=(m\sigma_*+m+1)/\sigma_*$. These easy verifications are left to the reader.
\end{proof}
We represent this explicit line and the orbits going out of $P_0$ (and entering $Q_4$) in the phase space in Figure \ref{fig4}.

\begin{figure}[ht!]
  % Requires \usepackage{graphicx}
  \begin{center}
  \includegraphics[width=14cm,height=11cm]{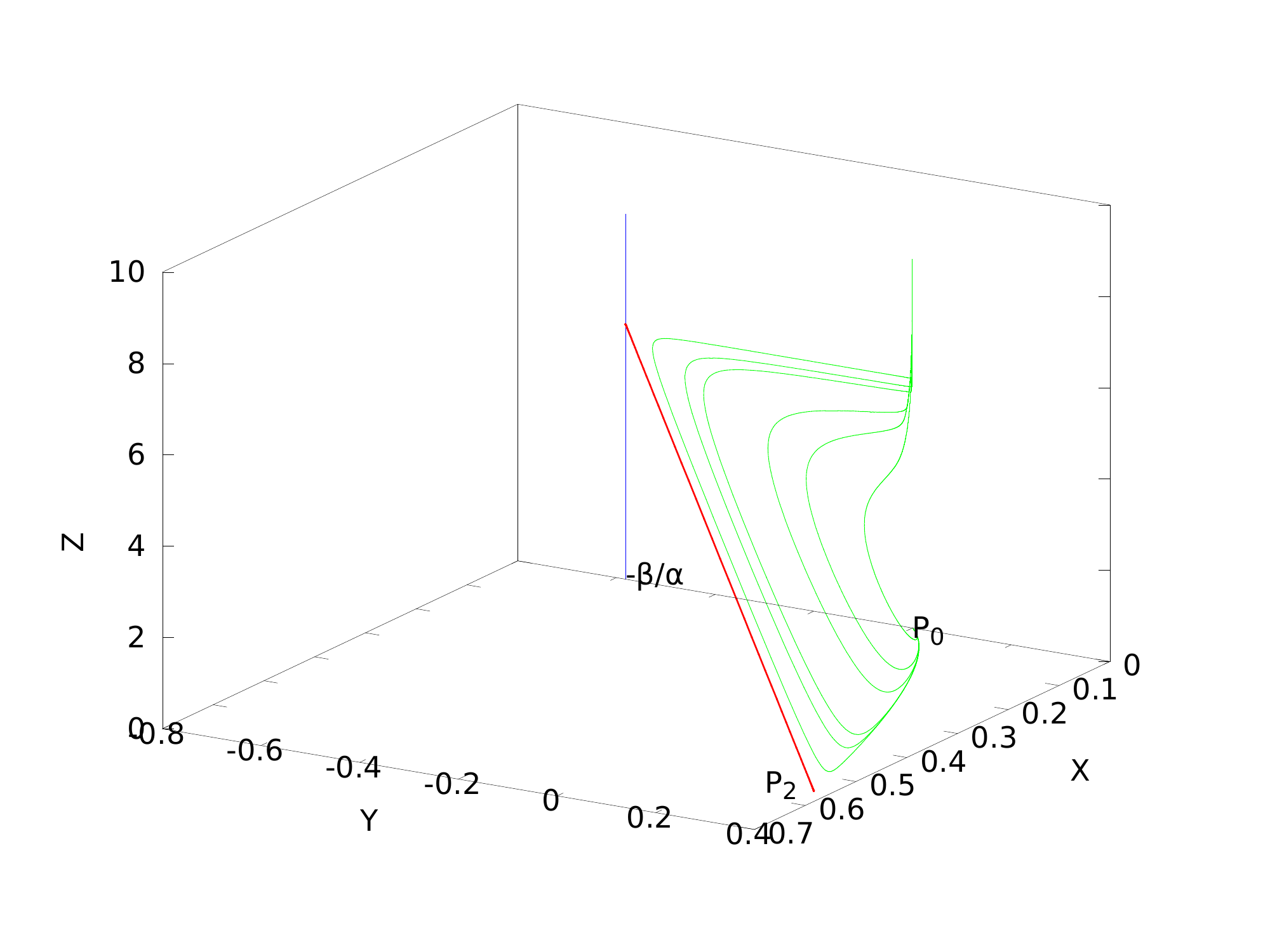}
  \end{center}
  \caption{Orbits going out from the critical points $P_2$ and $P_0$ in the phase space for the critical case $\sigma=\sigma_*$. Experiment for $m=4$.} \label{fig4}
\end{figure}

\section{Blow up profiles for $\sigma>0$ small}\label{sec.small}

This section is devoted to the proof of Theorem \ref{th.small}. As already explained in the Introduction, let us stress here that for exponents $1<p<m$, as it is done in our companion paper \cite[Section 4]{IS2}, the analogous result to Theorem \ref{th.small} was established by a proof based on continuity arguments and on the fact that the homogeneous case $\sigma=0$ can be seen as a limit case when $p>1$. For $p=1$, this technique can no longer work, since for $\sigma=0$ the qualitative behavior is radically different (there is no finite time blow up). Thus, in order to prove Theorem \ref{th.small}, we have to work deeply with the phase space associated to the system \eqref{PSsyst} in variables $X$, $Y$, $Z$. More precisely, the conclusion of Theorem \ref{th.small} follows as an outcome of Theorem \ref{th.exist} and of the following technical result.
\begin{proposition}\label{prop.small}
There exists $\sigma_0>0$ such that for any $\sigma\in(0,\sigma_0)$, all the profiles contained in the orbits going out from the finite critical points $P_0$ and $P_2$ in the phase space connect to the critical point at infinity $Q_4$, that is, they are not compactly supported.
\end{proposition}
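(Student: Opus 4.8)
The plan is to show that for $\sigma>0$ sufficiently small, the two orbits emanating from $P_0$ and $P_2$ cannot escape to the critical points $Q_2$ (which would produce a sign change and hence a backward interface, as in Lemma \ref{lem.6}) nor to any of the finite points $P_1^{\gamma}$ (which would give a finite interface, by Lemma \ref{lem.2}); the only remaining possibility, by the global phase-space bookkeeping, is that they are captured by the attractor $Q_4$ analyzed in Lemma \ref{lem.Q42}. The natural tool, announced in the Introduction, is the construction of \emph{geometric barriers}: invariant or one-sidedly-crossed surfaces (planes or simple quadrics) that trap the relevant orbit in a region where $Y$ stays bounded away from $-\beta/\alpha$ and where $Y$ cannot become too negative, thereby forbidding the competing escape routes. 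I would exploit that $Z$ is monotone increasing along every connection (noted right after \eqref{PSsyst}), so that the dynamics in $Z$ acts as a clock, and that for $\sigma$ small the coupling terms $\sigma ZX$ and the weight influence $-XZ$ in $\dot Y$ are themselves small on any bounded region.

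First I would set up the candidate barrier. The key observation is that the obstruction to reaching $Q_4$ is the term $-XZ$ in the $\dot Y$ equation, which tends to push $Y$ downward and eventually toward $Q_2$ or toward the interface line $\{Y=-\beta/\alpha\}$. The idea is to find a plane of the form $\{Y = cX + d\}$ (or a trapping region bounded below by such a plane and by $\{X=0\}$) whose normal flux under \eqref{PSsyst} has a definite sign on the relevant portion of the orbit, as long as $\sigma$ is small enough that the $Z$-dependent terms remain dominated. Concretely, I would compute $\frac{d}{d\eta}(Y-cX-d)$ along the flow, substitute \eqref{PSsyst}, and choose $c,d$ (possibly depending on $m$ but not on $\sigma$) so that this derivative has the favorable sign on the barrier surface; the smallness of $\sigma$ is then used precisely to absorb the contribution of $\sigma ZX$ and to keep $Z$ from growing fast enough to reverse the flux before the orbit is funneled into the finite neighborhood of $Q_4$ described at the end of Lemma \ref{lem.Q42}.

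Next I would run the argument separately for the two orbits but in parallel. For the orbit leaving $P_2$, the tangent direction is the eigenvector $e_3$ in \eqref{eigen.P2}, which for small $\sigma$ points into the region $X>0$, $Z>0$ with $Y$ close to its $P_2$-value $1/(m+1)\alpha>0$; I would verify that this initial direction lies strictly inside the trapping region bounded by the barrier plane, and then invoke invariance of the region to conclude the orbit never crosses into $\{Y<-\beta/\alpha\}$ and never turns back toward $Q_2$ (which requires $Y\to-\infty$). For the orbit leaving $P_0$ along the center manifold computed in Lemma \ref{lem.1}, where $Z\sim kX$ and $Y$ starts near $0$, the same barrier should contain the orbit provided $\sigma$ is small. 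Once trapped, the orbit has $X,W=XZ\to 0$ along it — one must check $X$ cannot stay bounded below, which follows because the trapped region forces $(m-1)Y-2X$ to become negative so $\dot X<0$ — and by the capture statement at the close of Lemma \ref{lem.Q42} it then enters the open finite-neighborhood of $Q_4$ and hence connects to $Q_4$, giving a non-compactly-supported profile.

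The main obstacle I anticipate is the barrier construction itself: one must exhibit a single surface (or finite family of surfaces) that simultaneously (i) contains both the $P_0$- and $P_2$-orbits on their favorable side near the starting points, (ii) has a sign-definite flux under the full nonlinear system \eqref{PSsyst}, not merely the linearization, on the whole portion of the boundary the orbit could reach, and (iii) degrades gracefully as $\sigma\to0$ so that a uniform threshold $\sigma_0$ emerges. The delicate point is (ii): the flux computation produces a quadratic expression in $X,Y,Z$, and showing it keeps a definite sign on the barrier while $Z$ ranges over a potentially large interval is where the smallness of $\sigma$ must be quantitatively exploited, most likely by bounding $Z$ along the orbit up to the moment of capture and feeding that bound back into the flux inequality. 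I would therefore expect the bulk of the work to be a careful, explicit choice of barrier coefficients together with an a priori bound on how large $Z$ can grow before the orbit is absorbed by $Q_4$.
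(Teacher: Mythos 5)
Your high-level architecture (geometric barriers with sign-definite flux, a trapping region, then capture by $Q_4$ via the finite-neighborhood statement at the end of Lemma \ref{lem.Q42}) is indeed the paper's approach, but the specific barrier you propose cannot be made to work, and repairing it is the actual content of the proof. On any plane $\{Y=cX+d\}$ with $c,d$ independent of $Z$, the normal flux under \eqref{PSsyst} is $\dot{Y}-c\dot{X}=-Y^2-\frac{\beta}{\alpha}Y+X-XY-XZ-cX[(m-1)Y-2X]$, and the term $-XZ$ is unbounded on the barrier as $Z\to\infty$ unless $X=0$ there, so no such plane has sign-definite flux on the whole portion the orbit can reach. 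Your proposed repair, an a priori bound on $Z$ along the orbit ``up to the moment of capture,'' is unavailable in principle: $Q_4$ is the point at infinity in the $Z$-direction, so $Z=\xi^{\sigma}/\alpha\to\infty$ along \emph{every} orbit entering it, and $Z$ cannot be kept bounded before capture. The quantity that does stay small is the product $XZ$ (Lemma \ref{lem.Q41} shows $XZ\to0$ along non-compactly-supported profiles), and this product, not $Z$, is what must be built into the barriers. There is also a bookkeeping slip: $Q_2$ is an unstable node, so no orbit can end there; the escape routes you must exclude are the stable node $Q_3$ (at $Y\to-\infty$) and the interface points $P_1^{\gamma}$ (at $Y=-\beta/\alpha$), and a single barrier plane at an intermediate height $Y=-\beta/2\alpha$ blocks both simultaneously.

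Concretely, the paper's barriers are: (i) the hyperbolic cylinder $\{XZ=k_1\}$ with $k_1=(\beta/2\alpha)^2$, whose flux $XZ[(m-1)Y+(\sigma-2)X]$ is negative for $Y<0$ and $\sigma<2$ --- note the problematic terms are simply absent from this flux, which is why a $Z$-dependent surface succeeds where your planar ansatz fails; (ii) the plane $\{Y=-\beta/2\alpha\}$, which orbits confined to $\{XZ<k_1\}$ cannot cross downward, since the flux there is positive wherever $XZ\leq k_1$; (iii) the plane $\{Y=0\}$, with flux $X(1-Z)$, combined with monotonicity of $Z$ to prevent any return to $\{Y>0\}$; and (iv), to guarantee that the orbits leaving $P_2$ (tangent to $e_3$ in \eqref{eigen.P2}) and $P_0$ actually reach $\{Y=0\}$ inside $\{XZ<k_1\}$, the $\sigma$-dependent tilted plane $\{Y+Z/(1+\sigma)=1\}$, on which the flux reduces \emph{exactly} to $-Y^2-\frac{\beta}{\alpha}Y<0$ for $Y>0$: the $XZ$-contributions cancel identically, a cancellation impossible for a plane whose normal has no $Z$-component. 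Together with $\{X=X(P_2)\}$ and $\{Y=Y(P_2)\}$ this traps the orbit, while $Y>0$, in the region \eqref{interm4}, so it crosses $\{Y=0\}$ with $X<X(P_2)$ and $Z<1+\sigma$, hence $XZ<\sigma(1+\sigma)(m-1)^2/2(m+1)(\sigma+2)<k_1$ provided $\sigma$ lies below the root of $x(x+1)(x+2)=m+1$. The smallness of $\sigma$ enters only at this single quantitative comparison (plus $\sigma<2$ in step (i)), not in absorbing terms along the whole trajectory as you envisaged. Finally, inside $\{XZ<k_1,\ -\beta/2\alpha<Y<0\}$ one has $\dot{X}<0$ and $\dot{Z}>0$, and invariance of the $\omega$-limit set then forces entry into $Q_4$ directly; no separate argument that $X\to0$ is required.
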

\begin{proof}
The proof is rather technical and it involves the construction of several suitable barriers (in the form of planes and, more generally, surfaces in the phase space) for the orbits going out of $P_2$ and $P_0$. We do these constructions starting with $P_2$ as a reference point and for the readers' convenience we divide the proof into several steps.

\medskip

\noindent \textbf{Step 1.} Consider the surface $\{XZ=k_1\}$ in the phase space associated to the system \eqref{PSsyst}, with
$$
k_1=\left(\frac{\beta}{2\alpha}\right)^2=\frac{(m-1)^2}{4(\sigma+2)^2}>0,
$$
for $\sigma\in(0,2)$. The normal to the surface at any of its points has the direction given by the vector $(Z,0,X)$, thus the flow of the system on the surface is given by
$$
XZ[(m-1)Y-2X]+\sigma X^2Z=XZ\left[(m-1)Y+(\sigma-2)X\right],
$$
which is negative for $Y<0$, since $0<\sigma<2$.

\medskip

\noindent \textbf{Step 2.} Consider in a second step the plane $\{Y=-\beta/2\alpha\}$. The flow of the system \eqref{PSsyst} on this plane is given by
\begin{equation*}
\begin{split}
-Y^2-\frac{\beta}{\alpha}Y+X-XY-XZ&=-\frac{\beta^2}{4\alpha^2}+\frac{\beta^2}{2\alpha^2}+X+\frac{\beta}{2\alpha}X-XZ\\
&=\frac{\beta^2}{4\alpha^2}+X\left(1+\frac{\beta}{2\alpha}\right)-XZ,
\end{split}
\end{equation*}
which has a positive sign when
\begin{equation}\label{interm2}
Z<\frac{\beta}{2\alpha}+1+\frac{\beta^2}{4\alpha^2}\frac{1}{X}=\frac{\beta}{2\alpha}+1+\frac{k_1}{X}
\end{equation}
Since it is obvious that $k_1/X$ is smaller than the right hand side in \eqref{interm2}, we obtain that the hyperbola $\{XZ=k_1, Y=-\beta/2\alpha\}$ is situated on the same side of the hyperbola
$$
\left\{Z=\frac{\beta}{2\alpha}+1+\frac{k_1}{X}, \ Y=-\frac{\beta}{2\alpha}\right\},
$$
where the direction of the flow over the plane $\{Y=-\beta/2\alpha\}$ changes.

\medskip

\noindent \textbf{Step 3.} The flow of the system \eqref{PSsyst} on the plane $\{Y=0\}$ is given by $X(1-Z)$. Thus, a connection in the phase space coming from the positive region for $Y$ (that is, having points with $Y>0$) can only cross the plane $\{Y=0\}$ through points with $Z>1$. Since $Z(\xi)=\xi^{\sigma}$ is strictly increasing, once an orbit crossed $\{Y=0\}$ at a point with $Z>1$, then $Z$ will always remain bigger than 1 along that orbit, thus it can never come back to the half-space $\{Y>0\}$ in the future.

\medskip

\noindent \textbf{Step 4. Conclusion of Steps 1-3.} Suppose that we follow an orbit in the phase space coming from the half-space $\{Y>0\}$ (such as, for example, the one going out of the critical point $P_2$ does). If this connection intersects the plane $\{Y=0\}$ at a point in the region lying below the hyperbola $\{XZ=k_1\}$ (more precisely, the intersection point lies in the region $\{XZ<k_1, Y=0\}$), then by Step 1 and Step 3, it will afterwards always remain in the region $\{XZ<k_1, Y<0\}$. Using now Step 2, the possible intersection point with the plane $\{Y=-\beta/2\alpha\}$ would also lie in a region with $XZ<k_1$, hence the orbit we are following cannot cross the plane $\{Y=-\beta/2\alpha\}$. It follows that this orbit remains forever in the region $\{XZ<k_1, -\beta/2\alpha<Y<0\}$. Since $\dot{X}<0$ and $\dot{Z}>0$ in the region $\{XZ<k_1, -\beta/2\alpha<Y<0\}$, it follows that $X$ is decreasing and $Z$ is increasing along the orbit. From the monotonicity of $X$ and $Z$ and the invariance of the $\omega$-limit set of any orbit (see \cite[Theorem 2, Section 3.2]{Pe}), we readily deduce that this orbit has to enter a critical point which necessarily lies in the closure of the region $\{XZ<k_1, -\beta/2\alpha<Y<0\}$. By the local analysis of the critical points, it compulsory has to enter the critical point at infinity denoted by $Q_4$.

\medskip

The aim of the next (and last) steps in the proof is to show that indeed the orbits coming out of the critical points $P_2$ and $P_0$ in the phase space are in the previous situation, that is, they have to cross the plane $\{Y=0\}$ through a point with $XZ<k_1$, at least for $\sigma>0$ sufficiently small.

\medskip

\noindent \textbf{Step 5.} Let us denote the $X$ and $Y$ components of the critical point $P_2$ by
$$
X(P_2):=\frac{m}{2(m+1)\alpha}, \quad Y(P_2):=\frac{1}{(m+1)\alpha}, \quad Z(P_2)=0.
$$
Consider now on the one hand the plane $\{X=X(P_2)\}$. The flow of the system \eqref{PSsyst} on this plane is given by
$$
X[(m-1)Y-2X]=(m-1)X\left[Y-Y(P_2)\right]
$$
which has negative sign in the region $\{Y<Y(P_2)\}$. On the other hand, consider also the plane $\{Y=Y(P_2)\}$. The flow of the system \eqref{PSsyst} on this plane is given by
\begin{equation*}
\begin{split}
-Y^2-\frac{\beta}{\alpha}Y-XY+X-XZ&=-\frac{1}{(m+1)^2\alpha^2}-\frac{\beta}{(m+1)\alpha^2}-\frac{X}{(m+1)\alpha}+X-XZ\\
&<X\left(1-\frac{1}{(m+1)\alpha}\right)-\frac{(m+1)\beta+1}{(m+1)^2\alpha^2}\\
&=\frac{1}{(m+1)^2\alpha^2}\left[\alpha(m+1)(\alpha(m+1)-1)X-\beta(m+1)-1\right],
\end{split}
\end{equation*}
which has negative sign, provided
$$
X<\frac{1+\beta(m+1)}{\alpha(m+1)(\alpha(m+1)-1)}=X(P_2).
$$

\medskip

\noindent \textbf{Step 6.} As the last geometric barrier, consider the plane given by the equation
\begin{equation}\label{interm3}
Y+\frac{Z}{1+\sigma}=1,
\end{equation}
with normal vector $(0,1,1/(1+\sigma))$ and the flow of the system \eqref{PSsyst} on this plane given by
$$
-Y^2-\frac{\beta}{\alpha}Y+X-XZ-X\left(1-\frac{Z}{1+\sigma}\right)+\frac{\sigma}{1+\sigma}XZ=-Y^2-\frac{\beta}{\alpha}Y,
$$
which is negative when $Y>0$.

\medskip

\noindent \textbf{Step 7. End of the proof.} Let us begin from the (unique) connection in the phase space coming out of the point $P_2$. Since it goes out of $P_2$ tangent to the eigenvector $e_3$ given in \eqref{eigen.P2}, it follows that the orbit starting from $P_2$ goes out in the region $\{X<X(P_2), Y<Y(P_2), Z>0\}$. Moreover, we also notice that, exactly at the point $P_2$, we have
$$
Y(P_2)+\frac{Z(P_2)}{1+\sigma}=\frac{1}{(m+1)\alpha}=\frac{(m-1)\sigma}{(m+1)(\sigma+2)}<1,
$$
hence the orbit coming out of $P_2$ also does in inside the region where $Y+Z/(1+\sigma)<1$. Gathering the signs of the flows of the system on the planes considered in Steps 4, 5 and 6, it follows that this orbit cannot cross any of these planes, that is, while $Y>0$, the orbit will stay inside the region
\begin{equation}\label{interm4}
\left\{0<X<X(P_2), \ 0<Y<Y(P_2), \ Y+\frac{Z}{1+\sigma}\leq1\right\}.
\end{equation}
Since this orbit has to cross the plane $\{Y=0\}$, according to \eqref{interm4} it crosses it at a point with $X<X(P_2)$ and $Z<1+\sigma$, that is,
$$
XZ<\frac{(1+\sigma)(m-1)^2\sigma}{2(m+1)(\sigma+2)}<k_1,
$$
provided $\sigma\in(0,\sigma_0)$ for some $\sigma_0$ sufficiently small (more precisely, $\sigma_0$ is the unique solution to the third degree algebraic equation $x(x+1)(x+2)=m+1$). Joining this with Step 4, we deduce that the orbit coming out of $P_2$ should connect to the critical point at infinity $Q_4$ and thus have noncompact support, for any $\sigma\in(0,\min\{\sigma_0,2\})$ (recall that the restriction $\sigma<2$ was needed at Step 1).

Finally, the connections coming out from the critical point $P_0$ in the phase space, also enter immediately the same region given in \eqref{interm4}, as they contain profiles starting with positive slope at $\xi=0$ (that is, $Y>0$ in a right neighborhood of the origin). Then, the above is also valid for all the orbits coming out of $P_0$.
\end{proof}
We show in Figure \ref{fig5} below the evolution of the orbits going out from $P_2$ and $P_0$ and entering $Q_4$, as proved above.

\begin{figure}[ht!]
  % Requires \usepackage{graphicx}
  \begin{center}
  \includegraphics[width=14cm,height=11cm]{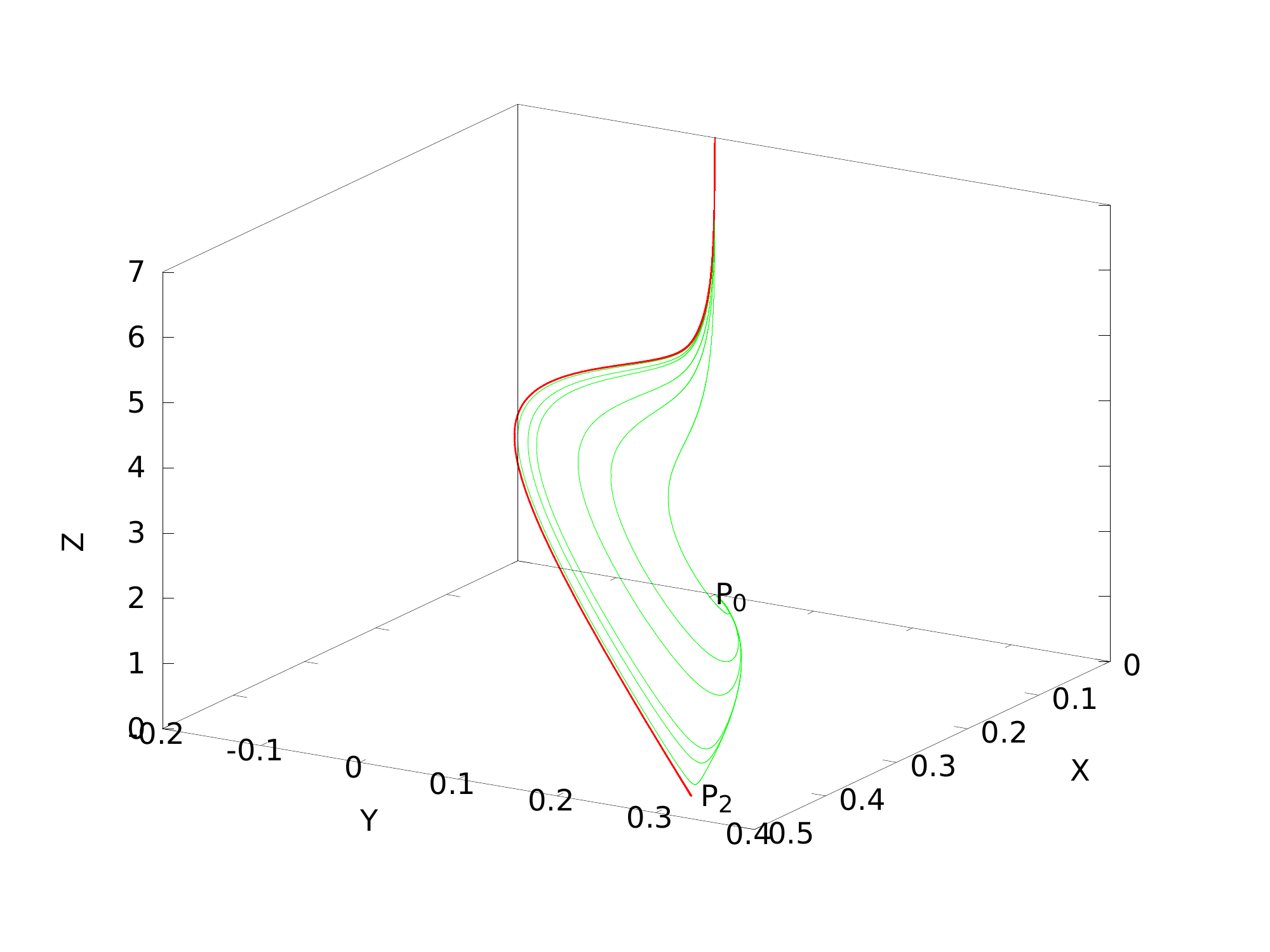}
  \end{center}
  \caption{Orbits going out from the critical points $P_2$ and $P_0$ in the phase space. Experiment for $m=4$ and $\sigma=2$.} \label{fig5}
\end{figure}

Since by Theorem \ref{th.exist}, for any $\sigma>0$ there exists a good profile with interface in the sense of Definition \ref{def1}, and by Proposition \ref{prop.small} above, such good profile with interface cannot be contained in an orbit coming either from $P_0$ or from $P_2$ in the phase space for $0<\sigma<\min\{\sigma_0,2\}$, it necessarily follows that any good profile with interface (which may not be unique) for $\sigma\in(0,\min\{\sigma_0,2\})$ comes from the critical point at infinity $Q_1$ and thus begins with $f(0)=A$, $f'(0)=0$ for some $A>0$, concluding the proof of Theorem \ref{th.small}.

\section{Blow up profiles for $\sigma>0$ large}\label{sec.large}

In this section, we analyze the phase space associated to the system \eqref{PSsyst} for $\sigma>0$ sufficiently large, with the aim of completing the proof of Theorem \ref{th.large}. We start with an easy but important remark.
\begin{lemma}\label{lem.large1}
Let $\sigma>0$ fixed. If an orbit in the phase space crosses the plane $\{Y=-\beta/\alpha\}$, then it cannot enter again in the half-space $\{Y>-\beta/\alpha\}$.
\end{lemma}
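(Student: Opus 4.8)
The plan is to exploit the sign of the vector field in the $Y$-direction along the plane $\{Y=-\beta/\alpha\}$, combined with the monotonicity of $Z$ recorded right after \eqref{PSsyst}. First I would substitute $Y=-\beta/\alpha$ into the second equation of \eqref{PSsyst}: the $Y$-only terms $-Y^2-\frac{\beta}{\alpha}Y=-Y\left(Y+\frac{\beta}{\alpha}\right)$ vanish at $Y=-\beta/\alpha$, leaving
$$\dot Y\big|_{Y=-\beta/\alpha}=X\left(1+\frac{\beta}{\alpha}-Z\right).$$
Thus, within the plane, the curve $\{Z=1+\beta/\alpha\}$ separates a region where the flow points upward into $\{Y>-\beta/\alpha\}$ (namely $Z<1+\beta/\alpha$) from a region where it points downward (namely $Z>1+\beta/\alpha$), in both cases wherever $X>0$.

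Next I would use two structural facts: $Z$ is non-decreasing along any orbit, since $\dot Z=\sigma ZX\ge0$; and $\{X=0\}$ is invariant, so an orbit with $X>0$ at one parameter value keeps $X>0$ throughout. Now suppose the orbit crosses the plane \emph{downward} at some $\eta_1$, i.e.\ passes from $\{Y>-\beta/\alpha\}$ into $\{Y<-\beta/\alpha\}$. At such a crossing $\dot Y(\eta_1)\le0$, which, since $X(\eta_1)>0$, forces $Z(\eta_1)\ge1+\beta/\alpha$; moreover $\dot Z=\sigma ZX>0$ for $\eta>\eta_1$, so $Z(\eta)>1+\beta/\alpha$ strictly thereafter.

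Finally I would argue by contradiction. If the orbit returned to $\{Y>-\beta/\alpha\}$, it would meet the plane again at some $\eta_2>\eta_1$ with $\dot Y(\eta_2)\ge0$. But at $\eta_2$ we have $Z(\eta_2)>1+\beta/\alpha$ and $X(\eta_2)>0$, whence $\dot Y(\eta_2)=X(\eta_2)\left(1+\frac{\beta}{\alpha}-Z(\eta_2)\right)<0$, a contradiction. Hence no return is possible and $\{Y<-\beta/\alpha\}$ is forward-invariant once entered, which is the assertion of the lemma.

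The only delicate points, and the ones I expect to require the most care, are the degenerate configurations rather than the main estimate. On the invariant plane $\{X=0\}$ the line $\{Y=-\beta/\alpha\}$ consists of the equilibria $P_1^\gamma$, so there is no genuine crossing there and the argument legitimately concerns only orbits off that plane. One should also exclude a tangential contact where $\dot Y=0$ at the touching point; this is resolved by observing that immediately afterward $Z$ strictly increases past $1+\beta/\alpha$ (because $\dot Z=\sigma ZX>0$ when $X,Z>0$), so the strict downward-flow conclusion is recovered on the ensuing interval and the contradiction persists.
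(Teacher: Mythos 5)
Your proof is correct and follows essentially the same route as the paper's: computing the flow on the plane $\{Y=-\beta/\alpha\}$ as $X\left(1+\frac{\beta}{\alpha}-Z\right)$, noting that a downward crossing forces $Z\geq 1+\beta/\alpha$, and using the monotonicity of $Z$ to rule out any return. Your extra care with the degenerate cases (tangential contact, and the line of equilibria $P_1^{\gamma}$ inside the invariant plane $\{X=0\}$) is a welcome refinement of the same argument, which the paper treats implicitly.
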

\begin{proof}
Consider the plane $\{Y=-\beta/\alpha\}$. The flow of the system \eqref{PSsyst} on this plane is given by
$$
-Y^2-\frac{\beta}{\alpha}Y+X-XY-XZ=X\left(1+\frac{\beta}{\alpha}-Z\right),
$$
hence an orbit in the phase space coming from the region $\{Y>-\beta/\alpha\}$ can cross this plane only in the region where the above flow is negative, that is, at points where $Z>1+\beta/\alpha$. Since $Z$ is nondecreasing, it follows that after the crossing point, the inequality $Z>1+\beta/\alpha$ will be satisfied forever on the orbit we are dealing with, thus the orbit cannot cross again the plane $\{Y=-\beta/\alpha\}$.
\end{proof}
We are now in a position to establish the behavior of the profiles contained in the unique connection in the phase space starting from the critical point $P_2$. This is the main technical ingredient in the proof of Theorem \ref{th.large}.
\begin{lemma}\label{lem.large2}
There exists $\sigma_1>0$ sufficiently large such that, for any $\sigma\in(\sigma_1,\infty)$, the orbit coming out of the critical point $P_2$ in the phase space enters the critical point at infinity denoted by $Q_3$ on the Poincar\'e hypersphere.
\end{lemma}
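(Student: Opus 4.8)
The plan is to follow the unique orbit emanating from $P_2$ and to show that, for $\sigma$ large, it is forced to plunge across the plane $\{Y=-\beta/\alpha\}$, after which its fate is essentially fixed. Recall from Lemma \ref{lem.3} that this orbit leaves $P_2$ tangent to the eigenvector $e_3$ in \eqref{eigen.P2}, whose $X$ and $Y$ components are negative and whose $Z$ component is positive; hence it enters the region $\{0<X<X(P_2),\ Y<Y(P_2),\ Z>0\}$ with $Z$ strictly increasing (since $\dot Z=\sigma ZX>0$). The endgame is clean: as soon as the orbit reaches a point with $Y<-\beta/\alpha$, Lemma \ref{lem.large1} traps it in the half-space $\{Y<-\beta/\alpha\}$ forever; the finite critical points $P_0$, $P_0^{\gamma}$, $P_1^{\gamma}$, $P_2$ all have $Y\ge-\beta/\alpha$, so there is no finite critical point there, and the dominant term $-Y^2$ in $\dot Y$ then forces $Y\to-\infty$. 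Since $Q_3$ is the only critical point at infinity with negative $\overline{Y}$-coordinate and is a stable node by Lemma \ref{lem.6}, the orbit must enter $Q_3$. Thus the whole difficulty is to show that the orbit actually descends below $Y=-\beta/\alpha$, rather than connecting to some $P_1^{\gamma}$ (as happens at $\sigma=\sigma_*$) or being captured by $Q_4$ (as in the small-$\sigma$ regime).

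The crossing is where the largeness of $\sigma$ is decisive, and I would exploit the fact that $Z$ evolves on a much faster scale than $X$ and $Y$. Writing $\beta/\alpha=(m-1)/(\sigma+2)\to0$ and differentiating along the orbit with respect to $Z$, one has $dY/dZ=[-Y^2-(\beta/\alpha)Y+X(1-Y)-XZ]/(\sigma XZ)$ and $dX/dZ=[(m-1)Y-2X]/(\sigma Z)$. Because of the factor $\sigma$ in each denominator, $X$ and $Y$ vary slowly in $Z$: starting from $(X,Y)\approx(X(P_2),Y(P_2))$, the quantity $X$ stays within a factor $(Z_0/Z)^{2X(P_2)/\sigma}$ of its initial value and hence remains bounded below by a positive constant $\approx X(P_2)$ while $Z$ grows to order $\sigma$. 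On the other hand, once $Z$ is moderately large the numerator of $dY/dZ$ is dominated by $-XZ$, so $dY/dZ\approx-1/\sigma$; consequently $Y$ can decrease from $Y(P_2)>0$ down to $0$ only after $Z$ has inflated to a value of order $\sigma\,Y(P_2)$. The upshot is that at the instant the orbit meets $\{Y=0\}$ the product $W:=XZ$ is large, of order $\sigma$, and in particular exceeds any prescribed constant once $\sigma$ is large enough.

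With $W=XZ$ large at the crossing of $\{Y=0\}$, I would finish using the companion system \eqref{PSsyst2} in the variables $(X,Y,W)$. Across the thin strip $-\beta/\alpha\le Y\le0$ (thin because $\beta/\alpha\to0$) one has $\dot Y=-Y^2-(\beta/\alpha)Y+X(1-Y)-W$, where $-Y^2-(\beta/\alpha)Y\le(\beta/\alpha)^2/4$ and $X(1-Y)\le X(P_2)(1+\beta/\alpha)$ is bounded, while $\dot W=W[(m-1)Y+(\sigma-2)X]$ keeps $W$ from decreasing as long as $X\gtrsim(m-1)^2/((\sigma+2)(\sigma-2))$, a negligible threshold for large $\sigma$. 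Hence $\dot Y<0$ uniformly over the strip, so $Y$ cannot turn back toward $\{Y=0\}$ and must cross $Y=-\beta/\alpha$; moreover, since $Z=W/X\ge W/X(P_2)$ is large there, the crossing is transversal (with $Z>1+\beta/\alpha$), which is exactly what overshoots the interface line and precludes a connection to any $P_1^{\gamma}$. Combining this with the trapping and exclusion argument of the first paragraph completes the proof, and $\sigma_1$ is simply the threshold beyond which $W$ at the $\{Y=0\}$ crossing exceeds $(\beta/\alpha)^2/4+X(P_2)(1+\beta/\alpha)$.

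The main obstacle is the quantitative lower bound on $W=XZ$ at the first crossing of $\{Y=0\}$, equivalently the control of $X$ from below while $Z$ inflates to order $\sigma$. The heuristics $dY/dZ\approx-1/\sigma$ and $dX/d(\ln Z)=O(1/\sigma)$ must be turned into rigorous a priori estimates, uniform as $\sigma\to\infty$, with particular care on the initial segment near $P_2$ where $Z$ is small and $\ln Z$ is large in absolute value (there the eigenvector expansion $X=X(P_2)+O(Z)$ must be used to start the integration cleanly). I expect this to be the delicate part, whereas the trapping below $\{Y=-\beta/\alpha\}$ and the identification of $Q_3$ as the only admissible $\omega$-limit are comparatively routine consequences of Lemmas \ref{lem.large1} and \ref{lem.6}.
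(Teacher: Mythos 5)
Your strategy is genuinely different from the paper's at the crucial step. The paper never quantifies the slow--fast heuristics: for a fixed level $y_0\in(-\beta_*/\alpha_*,0)$ it defines $\sigma_{y_0}=\sup\{\sigma>0: l_\sigma \mbox{ crosses } \{Y=y_0\}\}$ as in \eqref{interm5} and rules out $\sigma_{y_0}<\infty$ by a purely local contradiction: at a putative tangency of $l_{\sigma_{y_0}}$ with the plane $\{Y=y_0\}$ one has $Y'=0$, hence the point lies on the hyperbola \eqref{interm6}, and the explicit computation \eqref{interm7}--\eqref{interm8} shows $Y''<0$ once $y_0$ is chosen as in \eqref{interm9}, contradicting the local minimum that tangency from $\{Y>y_0\}$ would force; the remaining alternative, that $l_{\sigma_{y_0}}$ ends at a critical point $P_1^{\gamma}$ inside the plane, is excluded except for a discrete (hence, via Propositions \ref{prop.far} and \ref{prop.close}, finite) set of $\sigma$ by the transversality argument imported from \cite{IS2}. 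Your endgame (trapping below $\{Y=-\beta/\alpha\}$ by Lemma \ref{lem.large1}, monotone decrease of $Y$, identification of $Q_3$ as the only admissible limit) coincides with the paper's concluding paragraph. If your quantitative middle step were carried out, your proof would in fact be more self-contained than the paper's---no appeal to the transversality input of \cite{IS2}, and an effective threshold $\sigma_1$---whereas the paper's soft argument deliberately avoids all uniform-in-$\sigma$ estimates.

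However, as written there is a genuine gap at exactly the point you flag, and it is worse than a technicality: the specific bound you quote does not launch. Integrating $d\ln X/d\ln Z=[(m-1)Y-2X]/\sigma\geq-2X(P_2)/\sigma$ (valid while $Y\geq0$, $X\leq X(P_2)$) gives $X(Z)\geq X(Z_0)\left(Z_0/Z\right)^{2X(P_2)/\sigma}$, and this degenerates to zero as the base point slides down the unstable manifold, since $Z_0\to0$ means $\ln(Z/Z_0)\to\infty$; a Gronwall estimate in $\ln Z$ started ``at $P_2$'' is vacuous. To make the scheme work you must first show that the orbit is still within $o(1)$, \emph{uniformly in} $\sigma$, of $(X(P_2),Y(P_2))$ when $Z$ first reaches a fixed height $Z_0>0$; this amounts to a $\sigma$-uniform unstable-manifold (or invariant-cone) estimate of the type $|(m-1)Y-2X|\lesssim Z/\sigma$ for $Z\leq1$, which is plausible from the components of $e_3$ in \eqref{eigen.P2} (of order $1/\sigma$ and $1/\sigma^2$) but requires controlling the quadratic remainder of the linearization at $P_2$ uniformly as $\sigma\to\infty$---real work that your sketch acknowledges but does not supply. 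Two minor slips, both repairable: immediately after crossing $\{Y=-\beta/\alpha\}$ the dominant negative term in $\dot Y$ is $-XZ$ (of order $\sigma$), not $-Y^2$ (which there is of order $\sigma^{-2}$ and takes over only once $|Y|$ is large); and the points $P_1^{\gamma}$ lie on the boundary plane $\{Y=-\beta/\alpha\}$ itself, so excluding them as $\omega$-limits of the trapped orbit requires the strict decrease of $Y$ below that plane (as the paper argues), not merely the absence of critical points in the open half-space.
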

\begin{proof}
Let us first recall that, for $\sigma=\sigma_*=\sqrt{2(m+1)}$, there exists an explicit profile with interface \eqref{expl.prof} contained in an orbit in the phase space coming out of $P_2$ and entering a critical point belonging to the plane $\{Y=-\beta_*/\alpha_*\}$. For $\sigma>0$, denote by $l_{\sigma}$ the unique orbit coming out of $P_2$ in the phase space. Let now $y_0\in (-\beta_*/\alpha_*,0)$ be fixed and set
\begin{equation}\label{interm5}
\sigma_{y_0}:=\sup\{\sigma>0: l_{\sigma} \ {\rm crosses \ the \ plane } \ Y=y_0\}.
\end{equation}
Since $y_0\in (-\beta_*/\alpha_*,0)$, it is obvious that $\sigma_{y_0}>\sigma_*$. Our next goal is to prove that $\sigma_{y_0}=+\infty$, at least for $y_0$ sufficiently close to zero. Assume then by contradiction that $\sigma_{y_0}<\infty$. By continuity, the connection denoted by $l_{\sigma_{y_0}}$ should then intersect the plane $\{Y=y_0\}$ either finishing to a critical point inside the plane, or being tangent to it.

Assume that the orbit $l_{\sigma_{y_0}}$ is tangent to the plane $\{Y=y_0\}$. The flow on the plane $\{Y=y_0\}$ is given by the sign of the following expression
\begin{equation}\label{flow}
F(X,Z;y_0):=-y_0^2-\frac{\beta}{\alpha}y_0+X-Xy_0-XZ,
\end{equation}
hence there exists a hyperbola inside this plane where the flow of the system on the plane changes, given by the equation:
\begin{equation}\label{interm6}
Z=\frac{1}{X}\left[X-Xy_0-y_0^2-\frac{\beta}{\alpha}y_0\right].
\end{equation}
By the definition of $\sigma_0$, for any $\sigma\in(0,\sigma_0)$ the orbit $l_{\sigma}$ crosses the plane $\{Y=y_0\}$, and should do that necessarily by a point in the region where $F(X,Z;y_0)<0$. On the other hand, for any $\sigma>\sigma_0$ the orbit $l_{\sigma}$ do not intersect the plane $\{Y=y_0\}$. Thus, since $\sigma_0<\infty$, it is easy to infer by a continuity argument that the tangency point of $l_{\sigma_{y_0}}$ with the plane $\{Y=y_0\}$ lies exactly on the hyperbola defined in \eqref{interm6}. This means in particular that $Y'=0$ at this point, and we can calculate
\begin{equation}\label{interm7}
\begin{split}
Y''&=\left[-2Y-\frac{\beta}{\alpha}-X\right]Y'+X'-X'Y-X'Z-XZ'\\
&=X[(m-1)y_0-2X](1-y_0)-XZ[(m-1)y_0-2X]-\sigma X^2Z.
\end{split}
\end{equation}
Replacing $Z$ from the hyperbola \eqref{interm6} into \eqref{interm7}, we obtain after direct (although rather long and tedious) calculations that
\begin{equation}\label{interm8}
\begin{split}
Y''&=\sigma_0X^2(y_0-1)+(m-1)y_0^3\\&+\frac{1}{\sigma_0+2}\left[(\sigma_0-2)Xy_0(m-1+(\sigma_0+2)y_0)+m(m-2)y_0^2+y_0\right].
\end{split}
\end{equation}
From Lemma \ref{lem.large1} we know that, since the orbit $l_{\sigma_{y_0}}$ touches the plane $\{Y=y_0\}$, we have
$$
-\frac{\beta(\sigma_0)}{\alpha(\sigma_0)}=-\frac{m-1}{\sigma_0+2}\leq y_0<0,
$$
whence, also recalling that $\sigma_0>\sigma_*=\sqrt{2(m+1)}>2$, we get that
$$
(\sigma_0-2)Xy_0(m-1+(\sigma_0+2)y_0)<0.
$$
Thus, choosing $y_0$ such that
\begin{equation}\label{interm9}
\left\{\begin{array}{ll}-\frac{\beta_*}{\alpha_*}<y_0<0, & {\rm if} \ 1<m\leq2,\\
-\frac{1}{m(m-2)}<y_0<0, & {\rm if} \ m>2,\end{array}\right.
\end{equation}
we conclude that all the terms in \eqref{interm8} are non-positive and some of them is strictly negative, so that $Y''<0$ at the tangency point between the orbit $l_{\sigma_{y_0}}$ and the plane $\{Y=y_0\}$. But this is a contradiction with the fact that the orbit touches the plane $\{Y=y_0\}$ coming from the region $\{Y>y_0\}$: indeed, in such case, at the tangency point we would have a local minimum with respect to the variable $Y$ on the orbit $l_{\sigma_{y_0}}$, that is, $Y''\geq0$. This contradiction shows that the orbit $l_{\sigma_{y_0}}$ cannot be tangent to the plane $\{Y=y_0\}$.

It remains the case where the connection $l_{\sigma_{y_0}}$ ends in a finite critical point inside the plane $\{Y=y_0\}$. But this means that necessarily
$y_0=-(m-1)/(\sigma_0+2)$ and $l_{\sigma_{y_0}}$ connects thus the critical point $P_2$ with one of the critical points $P_1^{\gamma}$, thus containing a good profile with interface. We already know that this is possible (for example, for the explicit case $\sigma=\sigma_*$), but on the other hand, it is shown by a transversality argument in \cite[Subsection 5.3 and 5.4]{IS2} that this can happen only for a discrete set of parameters $\sigma>0$. Thus, for any $y_0$ satisfying condition \eqref{interm9} except for at most a discrete set, we find that $\sigma_{y_0}=\infty$. By the definition of $\sigma_0$ as a supremum in \eqref{interm5}, it follows that for
$$
\sigma=-2-(m-1)/y_0, \quad {\rm that \ is} \ y_0=-\frac{\beta(\sigma)}{\alpha(\sigma)},
$$
the connection $l_{\sigma}$ coming out of $P_2$ crosses the plane $\{Y=y_0\}$. According to Lemma \ref{lem.large1}, $l_{\sigma}$ will remain forever in the half-space $\{Y<y_0\}$. Writing the equation for $\dot{Y}$ in the form 
$$
\dot{Y}=-Y\left(Y+\frac{\beta}{\alpha}+X\right)+X(1-Z),
$$
taking into account the fact that $Z$ is increasing, $X$ is decreasing along the orbit and that $Z>1$ was already achieved when crossing the plane $\{Y=0\}$, it readily follows that, if for some $\eta_0>0$ we have $\dot{Y}(\eta_0)<0$, then $Y(\eta)<0$ for any $\eta>\eta_0$. Since at the moment of crossing the plane $\{Y=-\beta/\alpha\}$ it is obvious that $\dot{Y}<0$, we infer that $Y$ is decreasing along the orbit in the region $\{Y<-\beta/\alpha\}$. Thus, this orbit has to enter a critical point, thus it eventually enters the only critical point with $Y<-\beta/\alpha$ which is the attractor $Q_3$.

Finally, it is obvious from the proof of Theorem \ref{th.exist} that the exceptional discrete set where we have a good profile with interface contained in an orbit coming out of the point $P_2$ should be bounded (and thus finite), concluding the proof.
\end{proof}
We still need one more technical result in the phase space before completing the proof of Theorem \ref{th.large}.
\begin{lemma}\label{lem.large3}
For any $\sigma>0$, there exists an orbit in the phase space connecting the critical points $P_2$ and $P_0$ which in included in the invariant plane $\{Z=0\}$.
\end{lemma}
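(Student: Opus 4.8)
The plan is to carry out the whole argument inside the invariant plane $\{Z=0\}$, where the system \eqref{PSsyst} reduces to the planar system $\dot X=X[(m-1)Y-2X]$, $\dot Y=-Y^2-\frac{\beta}{\alpha}Y+X-XY$, and to produce the required heteroclinic orbit as a genuinely two-dimensional phase-plane connection. In this plane the only critical points with $X\geq0$ are $P_0$, $P_2$ and the interface point $P_1^0=(0,-\beta/\alpha,0)$. By Lemma \ref{lem.3} the point $P_2$ is a stable node for the restricted flow (its two-dimensional stable manifold is exactly $\{Z=0\}$), while by Lemma \ref{lem.1} there is a unique orbit leaving $P_0$ into the region $\{X>0\}$, namely the $Z\equiv0$ trajectory on the center manifold. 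It therefore suffices to prove that this outgoing orbit has $P_2$ as its $\omega$-limit.

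First I would introduce the two nullclines in $\{X>0\}$: the line $L_1=\{Y=2X/(m-1)\}$, on which $\dot X=0$, and the curve $L_2=\{X(1-Y)=Y(Y+\beta/\alpha)\}$, on which $\dot Y=0$. A short computation shows that both $L_1$ and $L_2$ pass through $P_0$ and through $P_2$, that near the origin $L_2$ lies above $L_1$ (because $\alpha/\beta=(\sigma+2)/(m-1)>2/(m-1)$), and that for $X>0$ the two curves meet only at these two points; hence they bound a lens-shaped region $R$ between $P_0$ and $P_2$, in whose interior $\dot X>0$ and $\dot Y>0$. The key point is the direction of the flow across $\partial R$: on the lower arc $L_1$ a direct evaluation gives $\dot Y|_{L_1}=-\tfrac{m+1}{2}\,Y\,(Y-Y(P_2))>0$ for $0<Y<Y(P_2)$, so the flow points straight up into $R$, while on the upper arc $L_2$ one has $\dot Y=0$ and $\dot X>0$, which (since $L_2$ has positive slope there) also points into $R$. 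Thus $R$ is a positively trapping region whose only critical points are the two corners $P_0$ and $P_2$.

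It then remains to verify that the orbit leaving $P_0$ actually enters $R$, and not the region above $L_2$; this is the step I expect to be the most delicate, and I would settle it using the second-order information already contained in the center manifold of Lemma \ref{lem.1}. Expanding both the outgoing orbit and the lower branch of $L_2$ as $Y=(\alpha/\beta)X+O(X^2)$ near $P_0$ and comparing the quadratic coefficients should show that the orbit lies below $L_2$ and above $L_1$ for small $X>0$, i.e. it enters $R$. Once inside the trapping lens, the orbit has $X$ and $Y$ strictly increasing and bounded, so by the Poincar\'e--Bendixson theorem (there are no interior critical points, and a closed orbit would have to enclose one) its $\omega$-limit is a single critical point of $\overline R$; it cannot be $P_0$, away from which $X$ is strictly increasing, hence it must be $P_2$. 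This yields the desired connection between $P_0$ and $P_2$ inside $\{Z=0\}$ for every $\sigma>0$, as claimed.
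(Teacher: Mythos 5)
Your proposal is correct and is essentially the paper's own proof: the paper constructs exactly the same lens-shaped trapping region bounded by the two nullclines \eqref{curve1} and \eqref{curve2} joining $P_0$ to $P_2$, checks the direction of the flow across both arcs, and shows the center-manifold orbit out of $P_0$ enters this region and hence must end in $P_2$. The one step you deferred does check out: restricting the center manifold of Lemma \ref{lem.1} to $\{Z=0\}$ gives the outgoing orbit as $Y=(\alpha/\beta)X-\frac{(\sigma+2)^2(m\sigma+m+1)}{(m-1)^3}X^2+O(X^3)$, while $L_2$ expands as $Y=(\alpha/\beta)X-\frac{(\sigma+2)^2(\sigma+m+1)}{(m-1)^3}X^2+O(X^3)$, and since $m\sigma+m+1>\sigma+m+1$ for $m>1$ the orbit lies below $L_2$ and enters the lens (the paper reaches the same conclusion slightly more qualitatively, via the sign of $dY/dX=\alpha/\beta>0$ being attainable near $P_0$ only inside the lens).
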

\begin{proof}
We restrict ourselves to the invariant plane $\{Z=0\}$, thus reducing the system \eqref{PSsyst} to the following system
\begin{equation}\label{PPsystZ0}
\left\{\begin{array}{ll}\dot{X}=X[(m-1)Y-2X],\\
\dot{Y}=-Y^2-\frac{\beta}{\alpha}Y+X-XY.\end{array}\right.
\end{equation}
Notice that the two critical points $P_2$ and $P_0$ both lie in the plane $\{Z=0\}$. We begin with two particular curves in the phase plane associated to the system \eqref{PPsystZ0}. First of all, let us consider the line
\begin{equation}\label{curve1}
(m-1)Y-2X=0, \quad {\rm that \ is} \ Y=\frac{2}{m-1}X,
\end{equation}
which connects the two critical points $P_2$ and $P_0$. The flow of the system \eqref{PPsystZ0} over the curve \eqref{curve1} is given by the sign of the following expression
\begin{equation*}
\begin{split}
(m-1)\left[-Y^2-\frac{\beta}{\alpha}Y+X-XY\right]&=\frac{2(m+1)}{m-1}X\left[-X+\frac{\sigma(m-1)^2}{2(m+1)(\sigma+2)}\right]\\
&=\frac{2(m+1)}{m-1}X[X(P_2)-X],
\end{split}
\end{equation*}
which is positive in the region $X<X(P_2)$. We now consider the curve where $dY/dX=0$, of equation
\begin{equation}\label{curve2}
-Y^2-\frac{\beta}{\alpha}Y+X-XY=0,
\end{equation}
whose normal vector has the direction $(1-Y,-2Y-\beta/\alpha-X)$ and the flow of the system \eqref{PPsystZ0} over the curve \eqref{curve1} is given by the sign of the following quantity:
$$
X(1-Y)[(m-1)Y-2X],
$$
which in the region $Y<Y(P_2)=1/(m+1)\alpha<1$ it is easy to check that it is positive. Noticing that the curve \eqref{curve2} also connects the critical points $P_0$ and $P_2$, it follows from the previous analysis that an orbit of the phase plane associated to the system \eqref{PPsystZ0} can only enter the closed region limited by the points $P_0$, $P_2$ and the curves \eqref{curve1} and \eqref{curve2} from outside, but never go out of this closed region once inside it.

We readily notice from Lemma \ref{lem.1} and its proof that the connections going out of $P_0$ tangent to its center manifold (which is the restriction of the two-dimensional center manifold near $P_0$ to the invariant plane $\{Z=0\}$) have the slope
$$
\frac{dY}{dX}=\frac{-Y^2-\beta/\alpha Y+X-XY}{X[(m-1)Y-2X]}=\frac{\alpha}{\beta}>0.
$$
But analyzing the sign of the fraction in the right hand side of the first equality above, it follows that near $P_0$ it is positive only in the closed region limited by the points $P_0$ and $P_2$ and the curves \eqref{curve1} and \eqref{curve2}. This means that the orbits going out of $P_0$ tangent to its center manifold should go into this closed region and thus remain forever there. They then have to enter the attractor (for the system \eqref{PPsystZ0}) $P_2$.
\end{proof}
We can see the connection from $P_0$ to $P_2$ in Figure \ref{fig6}. In the same Figure \ref{fig6} one can see a visual representation of the steps of the proof of Lemma \ref{lem.large3} with the curves \eqref{curve1} and \eqref{curve2} limiting regions with different behavior in the phase plane associated to the system \eqref{PPsystZ0}.

\begin{figure}[ht!]
  % Requires \usepackage{graphicx}
  \begin{center}
  \includegraphics[width=12cm,height=9cm]{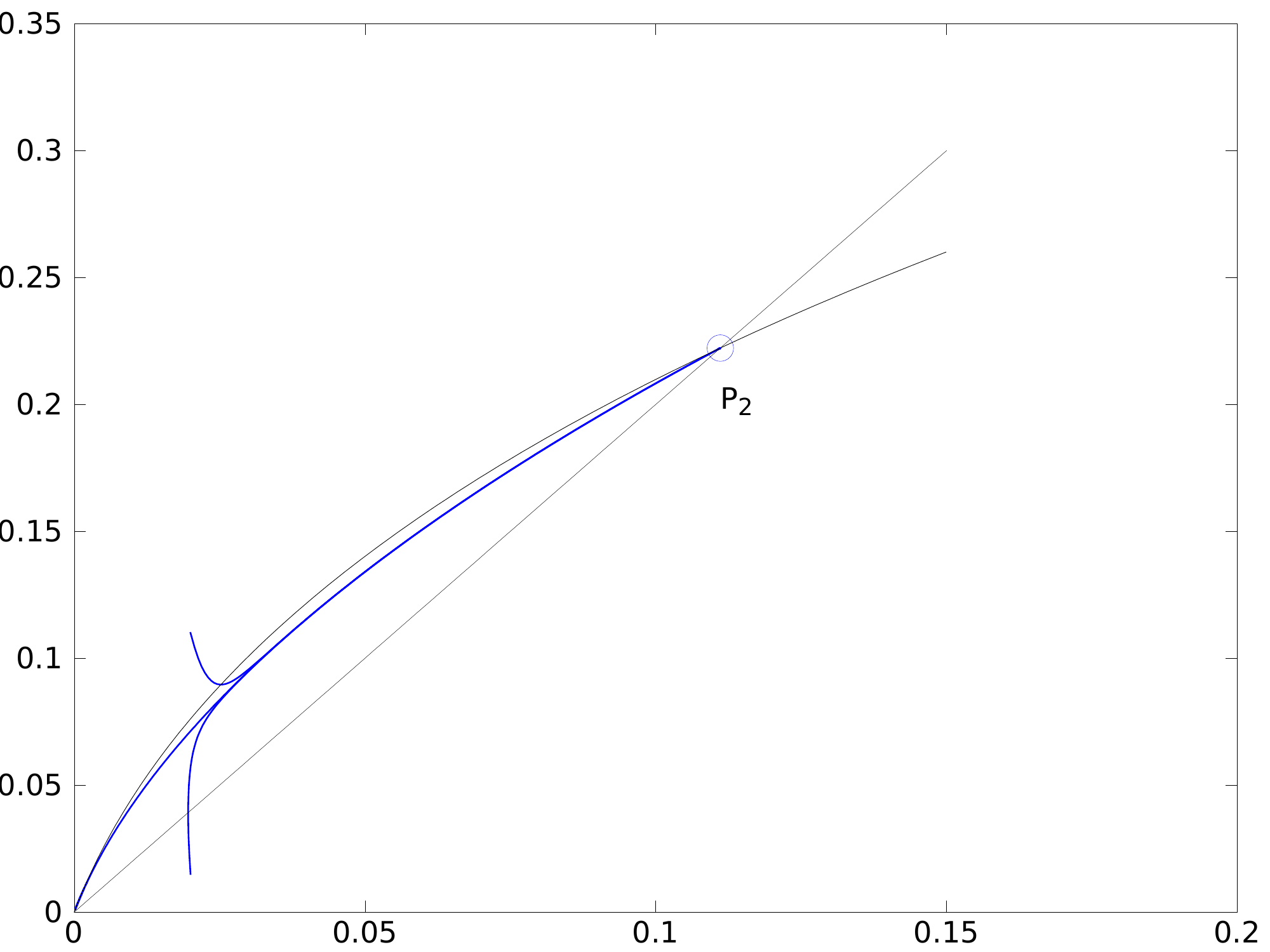}
  \end{center}
  \caption{Orbit connecting $P_0$ and $P_2$ inside the invariant plan $\{Z=0\}$.} \label{fig6}
\end{figure}

We need one more preliminary result concerning the orbits going out of the point $P_0$.
\begin{lemma}\label{lem.large4}
Let $\sigma>0$ be such that in the phase space associated to the system \eqref{PSsyst} there are orbits going out of $P_0$ and entering $Q_4$ and at the same time orbits going out of $P_0$ and entering $Q_3$. Then there exists at least one good profile with interface contained in an orbit going out of $P_0$.
\end{lemma}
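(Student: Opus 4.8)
The plan is to turn the hypothesis into a one-parameter shooting argument among the orbits leaving $P_0$ and to extract the desired profile as a boundary case between the two behaviors assumed to coexist. By Lemma \ref{lem.1}, the orbits going out of $P_0$ along its two-dimensional center manifold form a one-parameter family $\{O_k\}_{k>0}$, where $k>0$ is the constant in $Z\sim kX$ (equivalently, the leading coefficient of $f_k(\xi)\sim k\,\xi^{(\sigma+2)/(m-1)}$ as $\xi\to0$). Fixing a cross-section transverse to this family near $P_0$ and invoking continuous dependence on initial data, the map $k\mapsto O_k$ is continuous. I would then introduce
$$
\mathcal A=\{k>0: O_k \text{ enters } Q_4\}, \qquad \mathcal B=\{k>0: O_k \text{ enters } Q_3\},
$$
which are disjoint (one orbit cannot enter two distinct critical points) and, by hypothesis, both nonempty.

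First I would show that $\mathcal A$ and $\mathcal B$ are open. For $\mathcal A$ this is immediate from Lemma \ref{lem.Q42}: $Q_4$ admits an open finite neighborhood such that every orbit entering it necessarily enters $Q_4$, so if $O_{k_0}$ reaches that neighborhood then all nearby $O_k$ do as well. For $\mathcal B$ I would use that, by Lemma \ref{lem.6}, $O_k\to Q_3$ precisely when $f_k$ crosses zero with negative slope, which by Lemma \ref{lem.large1} is equivalent to $O_k$ crossing the plane $\{Y=-\beta/\alpha\}$ (below that plane no finite critical point survives, so the orbit is forced into the attractor $Q_3$, as at the end of Lemma \ref{lem.large2}). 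Such a crossing is transversal, hence an open condition on $k$. Since $(0,\infty)$ is connected while $\mathcal A$ and $\mathcal B$ are disjoint, nonempty and open, they cannot cover it; I pick $k_0$ in the (nonempty, closed) complement.

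The core of the argument is to identify the destination of the leftover orbit $O_{k_0}$. Along it $Z$ is nondecreasing, since $\dot Z=\sigma ZX\ge0$. If $Z\to\infty$ then $\xi\to\infty$ and $f_{k_0}$ is positive on all of $(0,\infty)$, so Lemmas \ref{lem.Q41} and \ref{lem.Q42} force $O_{k_0}\to Q_4$, contradicting $k_0\notin\mathcal A$; hence $Z\to\gamma$ for some finite $\gamma>0$, and then $\dot Z\to0$ together with $Z\to\gamma>0$ yields $X\to0$. As $k_0\notin\mathcal B$, the orbit never crosses $\{Y=-\beta/\alpha\}$, so $Y$ stays bounded below, while $Y\to+\infty$ is impossible because $Q_2$ is a source (Lemma \ref{lem.6}); thus $Y$ remains bounded. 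The $\omega$-limit set of $O_{k_0}$ is therefore a nonempty, connected, invariant subset of the line $\{X=0,\ Z=\gamma\}$, on which the reduced flow is $\dot Y=-Y(Y+\beta/\alpha)$ with equilibria only at $P_0^{\gamma}$ and $P_1^{\gamma}$. By Lemma \ref{lem.4} no orbit with $X>0$ accumulates at $P_0^{\gamma}$, so the $\omega$-limit cannot contain it; connectedness then forces $\omega(O_{k_0})=\{P_1^{\gamma}\}$. By Lemma \ref{lem.2}, $O_{k_0}$ contains a profile with an interface at the finite point $\xi_0=(\alpha\gamma)^{1/\sigma}$, i.e. a good profile with interface in the sense of Definition \ref{def1}.

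I expect the main obstacle to be precisely this last classification step, namely separating $P_1^{\gamma}$ from $P_0^{\gamma}$: one must exclude that the $\omega$-limit is the entire heteroclinic segment joining the two equilibria on $\{X=0,\ Z=\gamma\}$, which is exactly where Lemma \ref{lem.4} (no nontrivial orbit connecting to $P_0^{\gamma}$) is indispensable, in combination with the saddle structure of $P_1^{\gamma}$ from Lemma \ref{lem.2} that lets an orbit approaching from $X>0$ enter it along the one-dimensional stable direction $e_1$. A secondary technical point is the clean verification of the openness of $\mathcal B$ and of the continuous dependence of $O_k$ on $k$ near the degenerate critical point $P_0$.
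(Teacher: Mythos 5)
Your proof is correct and essentially coincides with the paper's own argument: the paper likewise parametrizes the orbits leaving $P_0$ by the constant $k$ in $Z\sim kX$ from Lemma \ref{lem.1}, defines the three sets of parameters according to whether the orbit enters $Q_4$, some interface point $P_1^{\gamma}$, or $Q_3$, obtains openness of the first and third sets exactly as you do (the attractor property of $Q_4$ from the end of the proof of Lemma \ref{lem.Q42}, and the stable-node character of $Q_3$), and concludes that the interface set is nonempty and closed. The one place where you go beyond the paper is the final $\omega$-limit classification of the leftover orbit $O_{k_0}$ --- the paper simply asserts, from the local analysis of Section \ref{sec.local}, that an orbit out of $P_0$ can only end at $Q_4$, $Q_3$ or a point $P_1^{\gamma}$ --- and there your appeal to Lemma \ref{lem.4} is slightly stronger than what that lemma literally states (it excludes orbits \emph{converging} to $P_0^{\gamma}$, not a priori orbits whose $\omega$-limit merely \emph{contains} $P_0^{\gamma}$ inside the whole segment), a point you flag yourself and which can be closed using the flow estimates in the proof of Lemma \ref{lem.4}; so your write-up is, if anything, more detailed than the paper's at the step it leaves implicit.
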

\begin{proof}
We know by Lemma \ref{lem.1} that the profiles contained in orbits going out of $P_0$ are uniquely characterized by the constant $k>0$ such that $Z\sim kX$ near $\xi=0$. Moreover, the local analysis in Section \ref{sec.local} shows that an orbit coming out of $P_0$ can either connect to $Q_4$, to $Q_3$ or to any of the interface points $P_1^{\gamma}$ with $\gamma>0$. We can thus define the following sets:
\begin{equation*}
\begin{split}
&A_0:=\{k>0: {\rm the \ orbit \ from \ }P_0 \ {\rm with} \ Z\sim kX \ {\rm enter} \ Q_4\},\\
&B_0:=\{k>0: {\rm the \ orbit \ from \ }P_0 \ {\rm with} \ Z\sim kX \ {\rm enter} \ P_1^{\gamma} \ {\rm for \ some} \ \gamma>0\},\\
&C_0:=\{k>0: {\rm the \ orbit \ from \ }P_0 \ {\rm with} \ Z\sim kX \ {\rm enter} \ Q_3\}.
\end{split}
\end{equation*}
Since $Q_3$ is a stable node and $Q_4$ behaves like an attractor for the orbits entering it from the finite region of the phase space (see Lemma \ref{lem.Q42} and the end of its proof), it readily follows that $A_0$ and $C_0$ are open sets, while the hypothesis implies that $A_0$, $C_0$ are non-empty. Thus, $B_0$ is a non-empty closed set.
\end{proof}
\noindent \textbf{Remark.} The connection from $P_2$ to $P_0$ inside the invariant plane $\{Z=0\}$ established in Lemma \ref{lem.large3} corresponds to the limit case in Lemma \ref{lem.1} such that $Z\sim kX$ with $k=0$. 

Theorem \ref{th.tail} is now a simple and immediate consequence of the previous proofs
\begin{proof}[Proof of Theorem \ref{th.tail}]
The existence of $\sigma_0$ such that for any $\sigma\in(0,\sigma_0)$ all the orbits coming out of both $P_0$ and $P_2$ in the phase space enter $Q_4$ is the outcome of Proposition \ref{prop.small}. But passing to profiles, such connections contain all the profiles with $f(0)=0$ and $(f^m)'(0)=0$, while entering $Q_4$ means in terms of profiles behaving as in \eqref{tail} as $\xi\to\infty$, see Lemma \ref{lem.Q42}. On the other hand, the local analysis done in Lemma \ref{lem.Q42} also shows that for any $\sigma>0$ there exist cycles inside the elliptic sector near the origin of the system \eqref{PSsyst2}, that is, connections from $P_0$ to $Q_4$ in the phase space associated to the system \eqref{PSsyst}. The profiles contained in these orbits behave as desired both as $\xi\to0$ and as $\xi\to\infty$.
\end{proof}

We are now ready to complete the proof of Theorem \ref{th.large}
\begin{proof}[Proof of Theorem \ref{th.large}]
Let $\sigma>\sigma_1$ be fixed. By Lemma \ref{lem.large2} we know that the orbit coming out of $P_2$ in the phase space connects to the stable node $Q_3$ at infinity. Since $Q_3$ is a stable node and $P_2$ a saddle point, there exists $\delta>0$ sufficiently small such that for any (non-critical) point in a small half-ball near $P_2$, namely $(X,Y,Z)\in B(P_2,\delta)\cap\{Z>0\}$, the unique orbit passing through this point in the phase-space enters $Q_3$. On the other hand, by Lemma \ref{lem.large3} we know that there is a orbit connecting $P_0$ to $P_2$ inside the plane $\{Z=0\}$. Again by continuity, as the two points are in the finite region, there exists an orbit going out of $P_0$ and approaching as much as we want $P_2$ (without entering this point), in particular, there exists such an orbit entering the ball $B(P_2,\delta)$. Thus, we conclude that for any $\sigma>\sigma_1$ (with $\sigma_1$ given by Lemma \ref{lem.large2}), there exists at least an orbit in the phase space coming out of $P_0$ and entering $Q_3$. On the other hand, we know from Proposition \ref{prop.small} that there exists $\sigma_0\in(0,2)$ such that for $\sigma\in(0,\sigma_0)$, all the connections coming out of $P_0$ in the phase space enter the critical point $Q_4$. Define the following three sets:
\begin{equation*}
\begin{split}
&A:=\{\sigma>0: {\rm all \ the \ orbits \ from \ }P_0 \ {\rm enter} \ Q_4\},\\
&B:=\{\sigma>0: {\rm there \ are \ orbits \ from \ }P_0 \ {\rm with \ different \ behavior}\},\\
&C:=\{\sigma>0: {\rm all \ the \ orbits \ from \ }P_0 \ {\rm enter} \ Q_3\}.
\end{split}
\end{equation*}
Since $Q_3$ is a stable node, it is easy to check that $A$ is an open set. Moreover, Proposition \ref{prop.small} shows that $A$ is nonempty and contains an interval $(0,\sigma_0)$, while Lemmas \ref{lem.large2} and \ref{lem.large3} show that $A\neq(0,\infty)$. On the other hand, it follows from Theorem \ref{th.tail} that $C$ is the empty set, as there is for any $\sigma>0$ at least a connection between $P_0$ and $Q_4$. It thus follows that $B$ is a non-empty closed set. Let now $\sigma\in B$. Then either there are orbits from $P_0$ entering one of the points $P_1^{\gamma}$ for some $\gamma>0$, or there are some orbits from $P_0$ entering $Q_4$ and other orbits from $P_0$ entering $Q_3$; but in the latter case, Lemma \ref{lem.large4} insures that there exists a good profile with interface belonging to an orbit going out of $P_0$. Thus, for any $\sigma\in B$ there exists a good profile with interface (in the sense of Definition \ref{def1}) coming from the critical point $P_0$. Moreover, the latter situation occurs for any $\sigma\in(\sigma_1,\infty)$ with $\sigma_1>0$ given by Lemma \ref{lem.large2}, hence for any $\sigma\in(\sigma_1,\infty)$ there exists a good profile with interface behaving as in \eqref{buinf}.
\end{proof}
The following numerical experiment illustrates the behavior of the orbits going out of $P_2$ and $P_0$ in the phase space for $\sigma>0$ sufficiently large. We can visualize in Figure \ref{fig7} the splitting of the orbits going out of $P_0$, with respect to their behavior, into the three sets $A_0$, $B_0$ and $C_0$ defined in the proof of Lemma \ref{lem.large4}.

\begin{figure}[ht!]
  % Requires \usepackage{graphicx}
  \begin{center}
  \includegraphics[width=14cm,height=11cm]{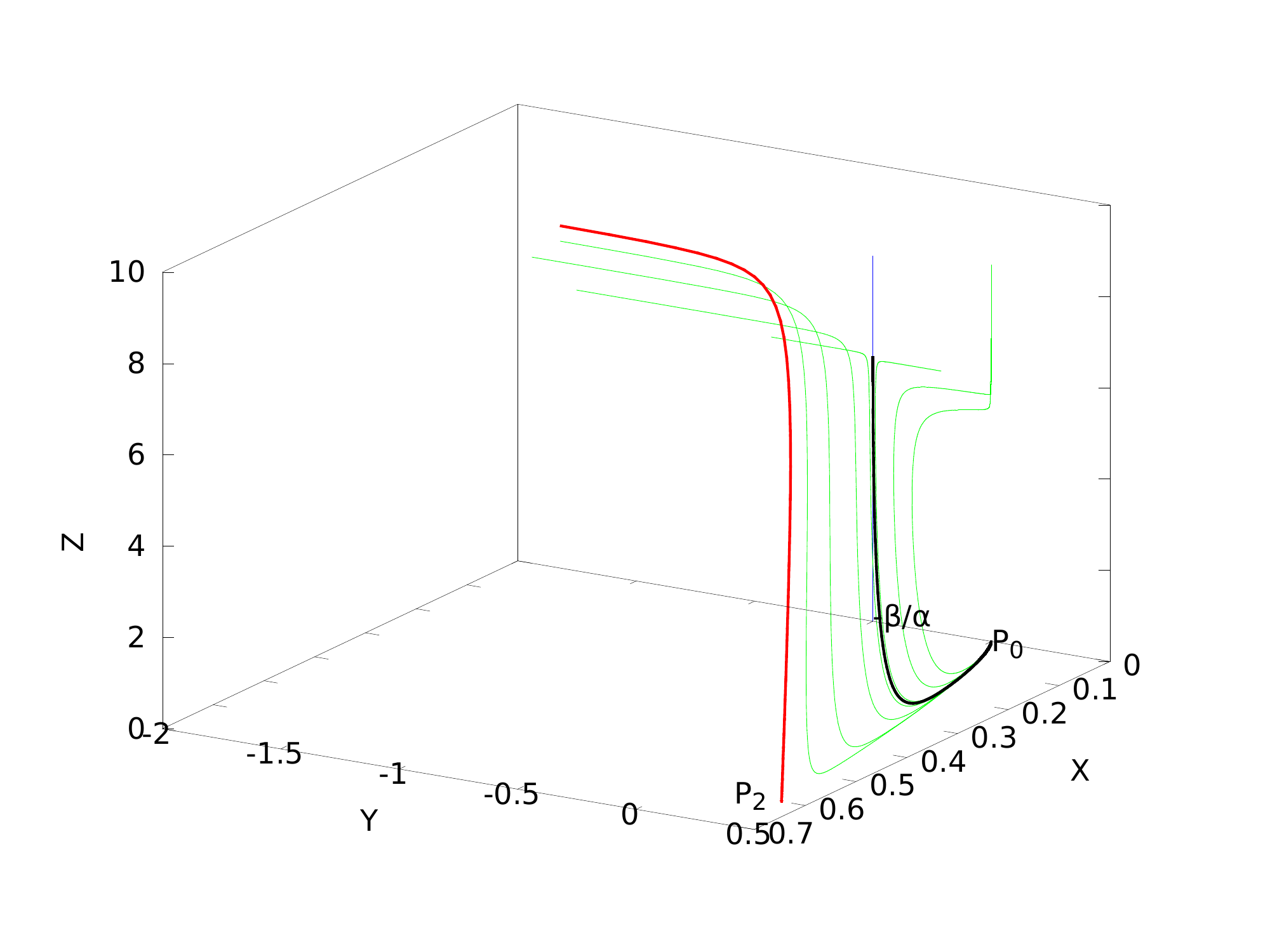}
  \end{center}
  \caption{Orbits going out from the critical points $P_2$ and $P_0$ in the phase space for $\sigma$ large. Experiment for $m=4$ and $\sigma=4$.} \label{fig7}
\end{figure}

\section*{Extensions and open problems}

We gather in this section some question, that in our opinion might be interesting, which can be addressed in some further work in order to complete the panorama of the blow up profiles for \eqref{eq1}.

\medskip

\noindent \textbf{1. Uniqueness results.} Probably the most interesting open question raised by our previous study is whether the good profile with interface for a given $\sigma>0$ is unique. Even in a weaker sense, showing that $\sigma_*=\sqrt{2(m+1)}$ is the only exponent $\sigma>0$ for which there exists a good profile with interface and with behavior as in \eqref{buzero} as $\xi\to0$ would be very interesting. Indeed, if such uniqueness result is true, then we have a full classification of the good profiles with interface: starting with $f(0)=A>0$ and $f'(0)=0$ for $\sigma\in(0,\sigma_*)$, the explicit solution $f_*$ for $\sigma=\sigma_*$ and the profiles given by Theorem \ref{th.large} for $\sigma\in(\sigma_*,\infty)$. We conjecture that this uniqueness is true, and even more, that for any $\sigma>0$ there exists a unique good profile with interface. We performed numerical experiments that support this conjecture, but still a rigorous proof is missing. Using the transversality technique in \cite[Subsection 5.4]{IS2} (which applies in the whole range $1\leq p<m$), we infer that there might be at most a finite set of exponents $\sigma>0$ for which a good profile with interface similar to $f_*$, that is with
$$
f(0)=0, \ \ f(\xi)\sim\left[\frac{m-1}{2m(m+1)}\right]^{1/(m-1)}\xi^{2/(m-1)}, \quad {\rm as} \ \xi\to0
$$
may exist. Indeed, the transversality gives that the set of such exponents $\sigma>0$ is discrete and gathering the results of Propositions \ref{prop.far} and \ref{prop.close} we further get that it is also bounded, thus finite. Still, the step of passing from a finite set to a singleton is missing by now and we leave it as an open problem, conjecturing that the uniqueness holds true.

\medskip

\noindent \textbf{2. Extension to higher space dimensions.} This is a next step that will be addressed in future work. The difficulty of passing to higher space dimension $N\geq2$ comes with the non-autonomous extra-term $(N-1)(f^m)'(\xi)/\xi$ in the equation of the profiles \eqref{SSODE}. Thus, the phase space may differ and new critical exponents are expected to appear. Just as a precedent, it was seen in \cite{S4, GV97} that passing to higher space dimension might be more difficult even with homogeneous reaction (that is, $\sigma=0$), thus for example the mere existence of a "good profile" is an open problem for the well-studied equation
$$
u_t=\Delta u^m+u^p, \quad 1<m<p
$$
for very large $p$ (see \cite[Theorems 4, 5, p. 197]{S4} where existence is only shown for $m<p<m(N+2)/(N-2)$ for $N\geq3$; this range has been extended a bit in \cite{GV97} but not for any $p\in(m,\infty)$ and up to our knowledge the existence of a "good profile" is still an open question for $p$ very large and $N\geq3$). However, the present work shows that even in dimension $N=1$, the influence of the weight $|x|^{\sigma}$ on the blow up profiles and behavior is very interesting and significant.

\section*{Acknowledgements} R. I. is supported by the ERC Starting Grant GEOFLUIDS 633152. A. S. is partially supported by the Spanish project MTM2017-87596-P.

\bibliographystyle{plain}

\begin{thebibliography}{1}

\bibitem{AT05}
D. Andreucci, and A. F. Tedeev, \emph{Universal bounds at the
blow-up time for nonlinear parabolic equations}, Adv. Differential
Equations, \textbf{10} (2005), no. 1, 89-120.

%\bibitem{AV95}
%D. G. Aronson, and J. L. V\'azquez, \emph{Anomalous exponents in nonlinear diffusion}, J. Nonlinear Sci., \textbf{5} (1995), no. 1, 29-56.

\bibitem{BZZ11}
X. Bai, S. Zhou, and S. Zheng, \emph{Cauchy problem for fast
diffusion equation with localized reaction}, Nonlinear Anal.,
\textbf{74} (2011), no. 7, 2508-2514.

\bibitem{BL89}
C. Bandle, and H. Levine, \emph{On the existence and nonexistence of
global solutions of reaction-diffusion equations in sectorial
domains}, Trans. Amer. Math. Soc., \textbf{316} (1989), 595-622.

\bibitem{BK87}
P. Baras, and R. Kersner, \emph{Local and global solvability of a
class of semilinear parabolic equations}, J. Differential Equations,
\textbf{68} (1987), 238-252.

\bibitem{CH}
S. N. Chow, and J. K. Hale, \emph{Methods of Bifurcation Theory}, Springer Verlag, New York-Berlin, 1982.

%\bibitem{CQW03}
%X. Chen, Y. Qi, and M. Wang, \emph{Self-similar singular solutions
%of a $p$-Laplacian evolution equation with absorption}, J.
%Differential Equations, \textbf{190} (2003), 1-15.

\bibitem{FdPV06}
R. Ferreira, A. de Pablo, and J. L. V\'azquez, \emph{Classification
of blow-up with nonlinear diffusion and localized reaction}, J.
Differential Equations, \textbf{231} (2006), no. 1, 195-211.

%\bibitem{FV}
%R. Ferreira, and J. L. V\'azquez, \emph{Extinction behaviour for
%fast diffusion equations with absorption}, Nonlinear Anal.,
%\textbf{43} (2001), no. 8, 943-985.
%
\bibitem{GV97}
V. A. Galaktionov, and J. L. V\'azquez, \emph{Continuation of blowup
solutions of nonlinear heat equations in several space dimensions},
Comm. Pure Appl. Math, \textbf{50} (1997), no. 1, 1-67.
%
\bibitem{GU05}
Y. Giga, and N. Umeda, \emph{Blow-up directions at space infinity for solutions of semilinear heat equations}, Bol. Soc. Paran. Mat., \textbf{23} (2005), 9-28.

\bibitem{GU06}
Y. Giga, and N. Umeda, \emph{On blow-up at space infinity for semilinear heat equations}, J. Math. Anal. Appl., \textbf{316} (2006), 538-555.

\bibitem{GP76}
B. H. Gilding, and L. A. Peletier, \emph{On a class of similarity solutions of the porous media equation}, J. Math. Anal. Appl., \textbf{55} (1976), 351-364.

\bibitem{GLS}
J.-S. Guo, C.-S. Lin, and M. Shimojo, \emph{Blow-up behavior for a
parabolic equation with spatially dependent coefficient}, Dynam.
Systems Appl., \textbf{19} (2010), no. 3-4, 415-433.

\bibitem{GS11}
J.-S. Guo, and M. Shimojo, \emph{Blowing up at zero points of
potential for an initial boundary value problem}, Commun. Pure Appl.
Anal., \textbf{10} (2011), no. 1, 161-177.

\bibitem{GLS13}
J.-S. Guo, C.-S. Lin, and M. Shimojo, \emph{Blow-up for a
reaction-diffusion equation with variable coefficient}, Appl. Math.
Lett., \textbf{26} (2013), no. 1, 150-153.

\bibitem{HS}
M. W. Hirsch, and S. Smale, \emph{Differential equations, dynamical
systems, and linear algebra}, Pure and Applied Mathematics, vol. 60,
Academic Press, New York-London, 1974.

\bibitem{IL13a}
R. G. Iagar, and Ph.~Lauren\ced{c}ot, \emph{Existence and uniqueness
of very singular solutions for a fast diffusion equation with
gradient absorption},  J. London Math. Soc., \textbf{87} (2013),
509-529.

\bibitem{IL17}
R. G. Iagar, and Ph. Lauren\ced{c}ot, \emph{Self-similar extinction for a diffusive Hamilton-Jacobi equation with critical absorption}, Calc. Var. Partial Differential Equations, \textbf{56} (2017), no. 3, Art. 77, 1-38.

%\bibitem{IL13b}
%R. G. Iagar, and Ph. Lauren\ced{c}ot, \emph{Eternal solutions to a
%singular diffusion equation with critical gradient absorption},
%Nonlinearity, \textbf{26} (2013), no. 12, 3169-3195.

\bibitem{IS2}
R. G. Iagar, and A. S\'anchez, \emph{Blow up profiles for a quasilinear reaction-diffusion equation with weighted reaction}, in preparation, 2018.

\bibitem{ISV}
R. G. Iagar, A. S\'anchez, and J. L. V\'azquez, \emph{Radial
equivalence for the two basic nonlinear degenerate diffusion
equations}, J. Math. Pures Appl., \textbf{89} (2008), no. 1, 1-24.

\bibitem{IU08}
T. Igarashi, and N. Umeda, \emph{Existence and nonexistence of
global solutions in time for a reaction-diffusion system with
inhomogeneous terms}, Funkcial. Ekvac., \textbf{51} (2008), no. 1,
17-37.

\bibitem{KWZ11}
X. Kang, W. Wang, and X. Zhou, \emph{Classification of solutions of
porous medium equation with localized reaction in higher space
dimensions}, Differential Integral Equations, \textbf{24} (2011),
no. 9-10, 909-922.

\bibitem{La84}
A. A. Lacey, \emph{The form of blow-up for nonlinear parabolic equations}, Proc. Royal Society Edinburgh Sect. A, \textbf{98} (1984), no. 1-2, 183-202.

\bibitem{Liang12}
Z. Liang, \emph{On the critical exponents for porous medium equation
with a localized reaction in high dimensions}, Commun. Pure Appl.
Anal., \textbf{11} (2012), no. 2, 649-658.

\bibitem{Ly51}
L. S. Lyagina, \emph{The integral curves of the equation $y'=\frac{ax^2+bxy+cy^2}{dx^2+exy+fy^2}$}, Uspekhi Mat. Nauk, \textbf{6} (1951), no. 2, 171-183 (Russian).

%\bibitem{dPS98}
%A. de Pablo, and A. S\'anchez, \emph{Travelling wave behavior for a Porous-Fisher equation}, European J. Appl. Math., \textbf{9} (1998), no. 3, 285-304.
%
%\bibitem{dPS00}
%A. de Pablo, and A. S\'anchez, \emph{Global travelling waves in reaction-convection-diffusion equations}, J. Differential Equations, \textbf{165} (2000), no. 2, 377-413.

\bibitem{dPS02}
A. de Pablo, and A. S\'anchez, \emph{Self-similar solutions satisfying or not the equation of the interface}, J. Math. Anal. Appl., \textbf{276} (2002), no. 2, 791-814.

\bibitem{Pe}
L. Perko, \emph{Differential equations and dynamical systems. Third
edition}, Texts in Applied Mathematics, \textbf{7}, Springer Verlag,
New York, 2001.

\bibitem{Pi97}
R. G. Pinsky, \emph{Existence and nonexistence of global solutions
for $u_t=\Delta u+a(x)u^p$ in $\real^d$}, J. Differential Equations,
\textbf{133} (1997), no. 1, 152-177.

\bibitem{Pi98}
R. G. Pinsky, \emph{The behavior of the life span for solutions to
$u_t=\Delta u+a(x)u^p$ in $\real^d$}, J. Differential Equations,
\textbf{147} (1998), no. 1, 30-57.

\bibitem{QS}
P. Quittner, and Ph. Souplet, \emph{Superlinear parabolic problems.
Blow-up, global existence and steady states}, Birkhauser Advanced
Texts, Birkhauser Verlag, Basel, 2007.

\bibitem{S4}
A. A. Samarskii, V. A. Galaktionov, S. P. Kurdyumov, and A. P.
Mikhailov, \emph{Blow-up in quasilinear parabolic problems}, de
Gruyter Expositions in Mathematics, \textbf{19}, W. de Gruyter,
Berlin, 1995.

\bibitem{Su02}
R. Suzuki, \emph{Existence and nonexistence of global solutions of
quasilinear parabolic equations}, J. Math. Soc. Japan, \textbf{54}
(2002), no. 4, 747-792.

\end{thebibliography}

\end{document}